\documentclass[a4paper,english,sumlimits,reqno]{amsart}

\usepackage[T1]{fontenc}
\usepackage[utf8]{inputenc}
\usepackage{lmodern}

\usepackage{babel}

%% comment following two lines for arxiv upload
% \usepackage{xifthen}%provides ifthenelse command
% \usepackage{ifthenx}%complements xifthen. MUST be loaded after xifthen

\usepackage{xargs}

%\usepackage[svgnames,x11names]{xcolor}

%\usepackage{graphicx}

%% todo package
%\usepackage[colorinlistoftodos,prependcaption,textsize=tiny]{todonotes}
%\newcommandx{\unsure}[2][1=]{\todo[linecolor=red,backgroundcolor=red!25,bordercolor=red,#1]{#2}}
%\newcommandx{\change}[2][1=]{\todo[linecolor=blue,backgroundcolor=blue!25,bordercolor=blue,#1]{#2}}
%\newcommandx{\info}[2][1=]{\todo[linecolor=OliveGreen,backgroundcolor=OliveGreen!25,bordercolor=OliveGreen,#1]{#2}}
%\newcommandx{\improvement}[2][1=]{\todo[linecolor=Plum,backgroundcolor=Plum!25,bordercolor=Plum,#1]{#2}}
%\newcommandx{\thiswillnotshow}[2][1=]{\todo[disable,#1]{#2}}
%\newcommandx{\todoin}[2][1=]{\todo[inline, caption={todo}, #1]{%
%\begin{minipage}{\textwidth-20pt}#2\end{minipage}}}

%% pstricks packages
% \usepackage{pstricks}
% \usepackage{pstricks-add}
% \usepackage{pst-node}
%% asymptote
% \usepackage[inline]{asymptote}%inline for default inline asymptote code

\usepackage{amssymb}
\usepackage{amsmath}
\usepackage{amsthm}
\usepackage{mathtools}
\usepackage{bbm}%blackboard symbols
%%%% bold stuff %%%%
% provides \bm which makes stuff bold
\usepackage{bm}
% provides \pmb (poor man's bold) which makes stuff bold
% (only use when \bm does not work (e.g. \mathscr fonts), otherwise \bm works better)
\usepackage{amsbsy}

\usepackage[mathcal]{eucal}%provides \mathscr --> eulerscript
\usepackage{mathrsfs}%provides \mathscr --> eulerscript

\usepackage{stmaryrd}%provides double brackets \llbracket

% custom counter
%\setcounter{secnumdepth}{3} %only the first 3 levels will have numbers
\setcounter{tocdepth}{1} %only first level appears in toc

%% diagrams
% provides xymatrix
\usepackage[all]{xy}
% provides a xymatrix which vertically centers eqnumbers
\newcommand{\vcxymatrix}[1]{\vcenter{\xymatrix{#1}}}

% for floating point calculations (option debug)
%\usepackage{fp}

% inserts \FloatBarrier at every section
% a little less restrictive is the option below
\usepackage[section]{placeins}
\usepackage{float}

\usepackage{enumerate}
%\usepackage{enumitem}
% uses "-" instead of bullets

%% custom environments + commands
%% redefine proof environment to incorporate a counter
\newcounter{proof}
{\stepcounter{proof}\begin{proof}}%
{\end{proof}}%
\newcounter{proofstep}[proof]
% \makeatletter
% \@addtoreset{proofstep}{proof}
% \makeatother
%
{\refstepcounter{proofstep}\bigskip\par\noindent%
  \ifthenelse{\isempty{#1}}%if
    {\textbf{Step \theproofstep.}}%then
    {\textbf{#1.}}%else
  \noindent}%
{\par}%
%% proofcase environment
\newcounter{proofcase}[proof]
{\refstepcounter{proofcase}\bigskip\par\noindent%
  \ifthenelse{\isempty{#1}}%if
    {\textbf{Case \theproofcase.}}%then
    {\textbf{#1.}}%else
  \noindent}%
{\par}%

%%%% deprecated environment proofcase below
%% proofcase environment for "cases" in proof environments
% \newcounter{proofcase}[proof]
% \newenvironment{proofcase}%
% {\refstepcounter{proofcase}\bigskip\par\noindent%
%   \textbf{Case \theproofcase.}%
%   \noindent}%
% {\par}%

%\usepackage{url}
%\usepackage{harvard}

%\usepackage{varioref}
\usepackage{hyperref}
% cleveref must be disabled for arxiv
%\usepackage[nameinlink]{cleveref}%must be loaded after all other packages

% sets the style for the subsequent theorems
\theoremstyle{plain}
% reset counter to 0 at ever occurrence of \section
\newtheorem{thm}{Theorem}[section]
\newtheorem*{thm*}{Theorem}
%% share a common counter with thm (reseted at every \section)
%% the following lines should be commented if cleveref has the common counter problem again
\newtheorem{pro}[thm]{Proposition}

\newtheorem{lem}[thm]{Lemma}

\theoremstyle{definition}
\newtheorem{dfn}[thm]{Definition}
\theoremstyle{remark}

\newtheorem{rem}[thm]{Remark}

% custom equation numbering
\numberwithin{equation}{section}

%% the following lines are to be used if cleveref has the common counter problem again
% \newtheorem{pro}{Proposition}[section]
% \newtheorem{cor}{Corollary}[section]
% \newtheorem{lem}{Lemma}[section]
% \newtheorem{que}{Question}[section]
% \theoremstyle{definition}
% \newtheorem{dfn}{Definition}[section]
% \theoremstyle{remark}
% \newtheorem*{ntn}{Notation}
% \newtheorem{rem}{Remark}[section]
% \newtheorem{clm}{Claim}[section]
% % custom equation numbering
% \numberwithin{equation}{section}

%%%%%% custom commands

%% custom hyperref command for adding text to numbers and putting it in parenthesis
%(sample output for \textref{someText, someLabel} (someText35))

\newcommandx{\textref}[2][1=]{(\hyperref[#2]{#1\ref*{#2}})}

%% fonts
\newcommand{\bfpar}[1]{\smallskip\paragraph{\textbf{#1}}}

%% math symbols

\DeclareMathOperator{\union}{\cup}
\DeclareMathOperator*{\Union}{\bigcup}
\DeclareMathOperator{\isect}{\cap}

\newcommand{\cc}[1]{\ensuremath{\llbracket #1 \rrbracket}}

\newcommand{\dint}{\mathscr D}%dyadic intervals
\newcommand{\drec}{\mathscr R}%dyadic rectangles
%\newcommand{\lip}{\ensuremath{\mathrm{Lip}}}

%\newcommand{\FT}{\ensuremath{\mathcal F}}
%\newcommand{\ft}[1]{\ensuremath{\widehat{#1}}}
%\renewcommand{\Re}{\mathrm{Re}}
%\renewcommand{\Im}{\mathrm{Im}}
%\newcommand{\setquant}{/}

%% integral stuff
\newcommand{\dif}{\ensuremath{\, \mathrm d}}

%% operators
% \DeclareMathOperator means textstyle limits (= limits besides)
% \DeclareMathOperator* means displaystyle limits (= limits below!)
\DeclareMathOperator{\sign}{sign}
\DeclareMathOperator{\Id}{Id}
%\DeclareMathOperator{\laplace}{\triangle}
% use this new version
%\DeclareMathOperator{\laplace}{\Delta}

\DeclareMathOperator{\spn}{span}

% very local definitions

%todo: big mid definieren!!!!
\DeclareMathOperator{\cond}{\mathbb E}

%% local ordering realtions
% lexicographic order on reals
\DeclareMathOperator{\lesslex}{<_\ell}
% lexicographic order on dyadic intervals

% lexicographic order on tuples of dyadic intervals

% order on dyadic rectangles
\DeclareMathOperator{\drless}{\vartriangleleft\,}

% ordering index on dyadic intervals

% ordering index on dyadic rectangles
\newcommand{\drindex}{\ensuremath{\mathcal O_{\drless}}}
% inverse ordering index on dyadic rectangles

\newcommand{\bmo}{\ensuremath{\mathrm{BMO}}}

\begin{document}

\title[Factorization in mixed norm Hardy and BMO spaces]
{Factorization in mixed norm Hardy and BMO spaces}

\author[R.~Lechner]{Richard Lechner}
\address{Richard Lechner,
  Institute of Analysis,
  Johannes Kepler University Linz,
  Altenberger Strasse 69,
  A-4040 Linz, Austria}
\email{Richard.Lechner@jku.at}

\date{\today}
\subjclass[2010]{
  46B25,% Classical Banach spaces in the general theory
  46B07,% Local theory of Banach spaces
  46B26,% Nonseparable Banach spaces
  30H35,% BMO spaces
  30H10.% Hardy spaces
}
\keywords{Factorization, mixed norm Hardy and $\bmo$ spaces, primary, localization,
  combinatorics of colored dyadic rectangles, bi--parameter Haar system, almost--diagonalization,
  projections}
\thanks{Supported by the Austrian Science Foundation (FWF) Pr.Nr. P28352}

\begin{abstract}
  Let $1\leq p,q < \infty$ and $1\leq r \leq \infty$.
  We show that the direct sum of mixed norm Hardy spaces $\big(\sum_n H^p_n(H^q_n)\big)_r$ and the
  sum of their dual spaces $\big(\sum_n H^p_n(H^q_n)^*\big)_r$ are both primary.
  We do so by using Bourgain's localization method and solving the finite dimensional factorization
  problem.
  In particular, we obtain that the spaces
  $\big(\sum_{n\in \mathbb N} H_n^1(H_n^s)\big)_r$,
  $\big(\sum_{n\in \mathbb N} H_n^s(H_n^1)\big)_r$,
  as well as
  $\big(\sum_{n\in \mathbb N} \bmo_n(H_n^s)\big)_r$
  and $\big(\sum_{n\in \mathbb N} H^s_n(\bmo_n)\big)_r$,
  $1 < s < \infty$, $1\leq r \leq \infty$,
  are all primary.
\end{abstract}

\maketitle
% \tableofcontents

\section{Introduction}\label{sec:intro}

\noindent
Let $\dint$ denote the collection of dyadic intervals on the unit interval, which is given by
\begin{equation*}
  \dint
  = \{[k2^{-n},(k+1)2^{-n}) : n,k\in \mathbb N_0, 0\leq k \leq 2^n-1\}.
\end{equation*}
The dyadic intervals are nested, i.e. if $I,J\in \dint$, then $I\cap J \in \{I,J,\emptyset\}$.
For $I\in \dint$ we let $|I|$ denote the length of the dyadic interval $I$.
The Carleson constant $\cc{\mathscr C}$ of a collection $\mathscr C\subset \dint$ is given by
\begin{equation*}
  \cc{\mathscr C}
  = \sup_{I\in \mathscr C} \frac{1}{|I|} \sum_{\substack{J\in \mathscr C\\J\subset I}} |J|.
\end{equation*}
Let $I\in \dint$ and $I\neq [0,1)$, then $\widetilde I$ is the unique dyadic interval
satisfying $\widetilde I\supset I$ and $|\widetilde I| = 2 |I|$.
Given $N_0\in \mathbb N_0$ we define
\begin{equation*}
  \dint_{N_0} = \{I\in \dint : |I| = 2^{-N_0}\}
  \qquad\text{and}\qquad
  \dint^{N_0} = \{I\in \dint : |I| \geq 2^{-N_0}\}.
\end{equation*}
Let~$h_I$ be the $L^\infty$--normalized Haar function supported on $I\in\dint$;
that is, $h_I$ is $+1$ on the left half of $I$, it is $-1$ on the right half of $I$, and zero
otherwise.
For $1\leq p < \infty$, the \emph{Hardy space} $H^p$ is the completion of
\begin{equation*}
  \spn\{ h_I : I \in \dint \}
\end{equation*}
under the square function norm
\begin{equation}\label{eq:Hp-norm}
  \|f\|_{H^p}
  = \Big(
      \int_0^1 \big(
        \sum_I |a_I|^2 h_I^2(x)
      \big)^{p/2}
    \dif x
    \Big)^{1/p}
  ,
\end{equation}
where $f = \sum_I a_I h_I$.

Let $\drec = \{I\times J: I,J\in\dint\}$ denote the collection of dyadic rectangles contained in the
unit square, and define the bi--parameter $L^\infty$--normalized Haar system by
\begin{equation*}
  h_{I\times J}(x,y) = h_I(x)h_J(y),
  \qquad I\times J\in\drec,\, x,y\in[0,1).
\end{equation*}
For $1\leq p,q < \infty$, the \emph{mixed-norm Hardy space} $H^p(H^q)$ is the completion of
\begin{equation*}
  \spn\{ h_{I\times J} : I\times J \in \drec \}
\end{equation*}
under the square function norm
\begin{equation}\label{eq:HpHq-norm}
  \|f\|_{H^p(H^q)}
  = \bigg(
    \int_0^1 \Big(
      \int_0^1 \big(
        \sum_{I\times J} |a_{I\times J}|^2 h_{I\times J}^2(x,y)
      \big)^{q/2}
    \dif y
    \Big)^{p/q}
    \dif x
  \bigg)^{1/p}
  ,
\end{equation}
where $f = \sum_{I\times J} a_{I\times J} h_{I\times J}$.
Given $m,n\in \mathbb N$, we define the space $H^p_m(H^q_n)$ by
\begin{equation*}
  H^p_m(H^q_n) = \spn\{ h_{I\times J} : I\in \dint^m, J\in \dint^n\},
\end{equation*}
equipped with the norm $\|\cdot\|_{H^p(H^q)}$.

For the following elementary and well known facts for which we refer
to~\cite{lindenstrauss-tzafriri:1977,maurey:1980,capon:1982,wojtaszczyk:1991,mueller:1994,mueller:2005} as sources:
\begin{itemize}
\item $(h_{I\times J})_{I\times J\in\drec}$ is an unconditional basis of~$H^p(H^q)$, called the
\emph{bi--parameter Haar system}.
This basis is $L^\infty$--normalized and not normalized in $H^p(H^q)$; in fact, we have
$\|h_{I\times J}\|_{H^p(H^q)} = |I|^{1/p}|J|^{1/q}$.

\item Let $1\leq p,q < \infty$ and let $H^p(H^q)^*$ denote the dual space of $H^p(H^q)$ with the
  usual operator norm given by
  \begin{equation}\label{eq:dual-norm}
    \|g\|_{H^p(H^q)^*} = \sup \{ |\langle g, f\rangle| : \|f\|_{H^p(H^q)}\leq 1 \}.
  \end{equation}

\item Since $h_{I\times J}$, $I\times J$ is a Schauder basis in $H^p(H^q)$, we canonically identify
  the elements $g\in H^p(H^q)^*$ with the sequence $(\langle g, h_{I\times J}\rangle)_{I\times J}$.
  Moreover, as $h_{I\times J}$, $I\times J$ is a $1$--unconditional basis in $H^p(H^q)$, the norm of
  $(|\langle g, h_{I\times J}\rangle|)_{I\times J}$ is equal to the norm of
  $(\langle g, h_{I\times J}\rangle)_{I\times J}$,
  see~\cite[Chapter 1]{lindenstrauss-tzafriri:1977}.

\item We naturally identify $h_{I_0 \times J_0}$ as an element of $H^p(H^q)^*$ by the following
  definition:
  $\langle h_{I_0\times J_0}, h_{I\times J} \rangle = |I\times J|$ if $I\times J = I_0\times J_0$,
  and $\langle h_{I_0\times J_0}, h_{I\times J} \rangle = 0$ if $I\times J \neq I_0\times J_0$.

\item Let $1 < p,q < \infty$ and $\frac{1}{p} + \frac{1}{p'} = 1$,
  $\frac{1}{q} + \frac{1}{q'} = 1$.
  Since for any finite linear combination of Haar functions $f$ we have
  \begin{equation*}
    C_{p,q}^{-1} \|f\|_{L^p(L^q)} \leq \|f\|_{H^p(H^q)} \leq C_{p,q} \|f\|_{L^p(L^q)},
  \end{equation*}
  the identity operator provides an isomorphism between $H^p(H^q)$ and $L^p(L^q)$.
  Hence, the dual of $H^p(H^q)$ identifies with $H^{p'}(H^{q'})$.
  Similarily, for the limiting cases we have $H^1(H^q)^* = \bmo(H^{q'})$,
  $H^p(H^1)^* = H^{q'}(\bmo)$ and $H^1(H^1)^* = \bmo(\bmo)$.
  See~\cite{maurey:1980} and also \cite{mueller:1994}.

\item The isomorphisms that identify the duals of the mixed norm Hardy spaces
  $H^p(H^q)$, $1\leq p,q < \infty$ with the spaces mentioned above may or may not depend on $p$
  and~$q$ (the uncertainty stems from not specifying the norm of $\bmo$).
  Since the constants in our results do not depend on $p$ or~$q$, but some of the proofs involve the
  dual space of $H^p(H^q)$, we have to be careful not to introduce dependencies on $p$ and~$q$ this
  way.
  As we will see, all the proofs are carried out by strictly using the dual norm of $H^p(H^q)$
  specified in~\eqref{eq:dual-norm}, thereby avoiding the problem of introducing $p$ or~$q$
  dependencies in the estimates.
\end{itemize}

Let $k,m,n\in \mathbb N$ and $1\leq p,q < \infty$.
Let $(b_i : 1\leq i \leq k)$ denote a block basis of bi--parameter Haar functions in
$H_m^p(H_n^q)$, and let $(b_i^* : 1\leq i \leq k)$ denote the bi--orthogonal functions, i.e.
$\langle b_i^*, b_i \rangle = 1$, and $\langle b_i^*, b_j \rangle = 0$, if $i\neq j$.
We say an operator $T : H_m^p(H_n^q)\to H_m^p(H_n^q)$ \emph{has large diagonal} with respect to the
system $(b_i : 1\leq i \leq k)$, if there exists a $\delta > 0$ such that
$|\langle b_i^*, T b_i\rangle| > \delta$, for all $1 \leq i \leq k$, and $\delta$ does not depend
on any of $m,n,k,p,q$.

We will briefly state the version of Pe{\l}czy{\'n}ski's decomposition method that we will use here:
Let $X$ and $Y$ be Banach spaces so that $X$ is isomorphic to a complemented subspace of $Y$, and
vice versa.
If $X$ is such that $X$ is isomorphic to $(\sum X)_r$ for some $1\leq r \leq \infty$, then $X$ is
isomorphic to $Y$.

The main object that we will study are the spaces
$\big(\sum_{m,n\in \mathbb N} H^p_m(H^q_n)\big)_r$, $1\leq p,q < \infty$ and $1\leq r \leq \infty$.
They are defined as follows:
\begin{equation}\label{eq:double-sum}
  \big(\sum_{m,n\in \mathbb N} H^p_m(H^q_n)\big)_r
  = \{ f = (f_{m,n})_{m,n\in \mathbb N} : f_{m,n}\in H_m^p(H_n^q),\ \|f\|_r < \infty \},
\end{equation}
equipped with the norm $\|f\|_r$ given by
\begin{equation}\label{eq:double-sum-norm}
  \|f\|_r
  = \big( \sum_{m,n\in \mathbb N} \|f_{m,n}\|_{H^p(H^q)}^r \big)^{1/r}, \text{if $r < \infty$},
  \ \text{and}\ 
  \|f\|_\infty = \sup_{m,n}\|f_{m,n}\|_{H^p(H^q)}.
\end{equation}
Naturally, the question arises how many non--isomorphic spaces are defined by~\eqref{eq:double-sum}
and~\eqref{eq:double-sum-norm}.
In Proposition~\ref{pro:iso-iso} we assert that
\begin{equation}\label{eq:results:1}
  \big(\sum_{m,n\in \mathbb N} H^p_m(H^q_n)\big)_r
  \quad\text{is isometrically isomorphic to}\quad
  \big(\sum_{n\in \mathbb N} H^p_n(H^q_n)\big)_r,
\end{equation}
and that by a variant of Pitt's theorem (see Theorem~\ref{thm:pitt}) the spaces
\begin{equation}\label{eq:results:2}
  \big(\sum_{n\in \mathbb N} H^p_n(H^q_n)\big)_r
  \ \text{and}\ 
  \big(\sum_{n\in \mathbb N} H^p_n(H^q_n)\big)_s
  \ \text{are not isomorphic for $1\leq r\neq s\leq \infty$}.
\end{equation}

\section{Main results}\label{sec:results}

\noindent
Here, we state the main results Theorem~\ref{thm:factorization} and Theorem~\ref{thm:primary} and describe the
concept of proof.
Their respective proofs are carried out in Section~\ref{sec:diag-glue-sums}.

\begin{thm}\label{thm:factorization}
  Let $1\leq p,q < \infty$ and $1\leq r \leq \infty$,
  and for all $n\in \mathbb N$ let $X_n$ denote the space $H^p_n(H^q_n)$ or its dual
  $H^p_n(H^q_n)^*$.
  For any $\eta > 0$ and any operator
  $T : \big(\sum_{n\in \mathbb N} X_n\big)_r\to
  \big(\sum_{n\in \mathbb N} X_n\big)_r$,
  there exist operators
  $R,S : \big(\sum_{n\in \mathbb N} X_n\big)_r\to \big(\sum_{n\in \mathbb N} X_n\big)_r$
  such that
  \begin{equation}\label{eq:thm:factorization}
    \vcxymatrix{
      \big(\sum_{n\in \mathbb N} X_n\big)_r \ar[r]^{\Id} \ar[d]_S
      & \big(\sum_{n\in \mathbb N} X_n\big)_r\\
      \big(\sum_{n\in \mathbb N} X_n\big)_r \ar[r]_H
      & \big(\sum_{n\in \mathbb N} X_n\big)_r \ar[u]_R
    }
  \end{equation}
  for $H = T$ or $H = \Id - T$ and $\|R\| \|S\| \leq 2 + \eta$.
\end{thm}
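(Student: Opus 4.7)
The plan is to follow Bourgain's localization method in three stages: first a reduction of the infinite-dimensional factorization problem to a family of finite-dimensional ones on the summands $X_n$; second the resolution of the finite-dimensional factorization problem on each $H^p_n(H^q_n)$ or its dual by means of an almost-diagonalization argument on a carefully chosen sub-collection of dyadic rectangles; and third a glueing step combined with Pe{\l}czy{\'n}ski's decomposition method to obtain the factor $2+\eta$ on the whole of $Y := \big(\sum_{n\in \mathbb N} X_n\big)_r$.

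First I would perform a binary pigeonhole. For each bi-parameter Haar function $h_{I\times J}$ the normalized diagonal coefficients of $T$ and of $\Id-T$ sum to one, so one of them is at least $1/2$; colour $I\times J$ accordingly. Using the obvious self-similarity $Y\cong\big(\sum_{k\in \mathbb N} Y\big)_r$ obtained by partitioning $\mathbb N$ into infinitely many infinite subsets arranged so that each block carries an isomorphic copy of $Y$, one can pass to a subsequence of indices $n$ on which a single global choice $H\in\{T,\Id-T\}$ prevails. On each retained summand the colouring yields a sub-collection $\mathscr C_n\subset\drec$ of small Carleson constant $\cc{\mathscr C_n}$ whose associated bi-parameter Haar block basis spans an isomorphic copy of $H^p_n(H^q_n)$ on which $H$ has large diagonal in the sense introduced just before the theorem.

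The main obstacle is the finite-dimensional factorization on a single block: given that $H$ has large diagonal with respect to a bi-parameter Haar block basis inside $X_n$, construct operators $R_n,S_n$ satisfying $\Id_{X_n} = R_n H_n S_n$ with $\|R_n\|\|S_n\|$ bounded independently of $n,p,q$. The strategy is almost-diagonalization: the unconditionality of the bi-parameter Haar system together with the small Carleson constant of $\mathscr C_n$ control the off-diagonal part of $H$ on the chosen subspace, while the diagonal part is inverted using the large-diagonal hypothesis. Handling the dual spaces $H^p_n(H^q_n)^*$ uniformly in $p,q$ forces one to argue strictly with the dual norm~\eqref{eq:dual-norm}, as flagged in the introduction; the combinatorics of coloured dyadic rectangles, together with this almost-diagonalization, is the technical heart of the proof.

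In the final step the local operators $R_n,S_n$ are assembled into block-diagonal operators on the complemented subspace $Z\subset Y$ selected above, yielding $\Id_Z = R\, H|_Z\, S$. A single application of Pe{\l}czy{\'n}ski's decomposition method, as stated just before the theorem, then promotes this to the factorization diagram~\eqref{eq:thm:factorization} of $\Id_Y$ through $H$ with $\|R\|\|S\|\leq 2+\eta$, the factor $2$ being the usual cost of the decomposition method and $\eta$ absorbing the almost-diagonalization error.
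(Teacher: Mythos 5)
Your outline follows the announced strategy (localization, finite--dimensional factorization, glueing), but two of its load--bearing steps are wrong or missing. First, the combinatorial core: colouring the dyadic rectangles according to which of $T$, $\Id-T$ has a large diagonal entry produces an \emph{arbitrary} sub-collection $\mathscr C\subset\drec$, and in the bi--parameter setting you cannot simply take the span of it. The paper first almost--diagonalizes the (already diagonalized) operator $D_n$ by a block basis $(b_R)$ satisfying the local product conditions \textref[P]{enu:p1}--\textref[P]{enu:p4} (Theorem~\ref{thm:quasi-diag}, built on the combinatorial Lemma~\ref{lem:comb-1}), colours by $|\langle b_R, D_n b_R\rangle|\geq\|b_R\|_2^2/2$, and then needs the Ramsey--type Theorem~\ref{thm:ramsey} to extract a \emph{product} collection $\mathscr E\times\mathscr F$ lying entirely inside the coloured set or its complement, with $\cc{\mathscr E},\cc{\mathscr F}$ \emph{large}. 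Your claim that the colouring yields a sub-collection of \emph{small} Carleson constant spanning an isomorphic copy of $H^p_n(H^q_n)$ is backwards: it is a large Carleson constant that permits the Jones--type selection of one--parameter block bases, which are then fed into the reiteration Lemma~\ref{lem:product:local-product} so that Theorem~\ref{thm:projection} gives equivalence to the Haar system and a bounded orthogonal projection uniformly in $p,q$; unconditionality of the bi--parameter Haar system alone gives neither complementation nor the isomorphic copy, and without the product extraction of Theorem~\ref{thm:ramsey} the construction of a block basis satisfying \textref[P]{enu:p1}--\textref[P]{enu:p4} inside the coloured set cannot be carried out. This Capon/M\"uller-type combinatorics is precisely what distinguishes the two--parameter problem from the one--parameter one, and it is absent from your argument.

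Second, the global step. You never reduce $T$ to an (almost) diagonal operator on $\big(\sum_n X_n\big)_r$: for $r<\infty$ this is a gliding--hump argument (Lemma~\ref{lem:diagonalization-1}), but for $r=\infty$ the gliding hump fails and one needs the property that projections almost annihilate finite--dimensional subspaces, supplied by Theorem~\ref{thm:projections-that-annihilate} and used in Lemma~\ref{lem:diagonalization-2}; your proposal does not address $r=\infty$ at all. Finally, the constant: Pe{\l}czy{\'n}ski's decomposition method is not used in the proof of this theorem (it enters only in Theorem~\ref{thm:primary}), and it could not deliver the quantitative bound $\|R\|\|S\|\leq 2+\eta$ in diagram~\eqref{eq:thm:factorization} in any case, since it produces isomorphisms rather than a norm--controlled factorization of the identity. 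The factor $2$ is $1/\delta$ with $\delta=1/2$ coming from the pigeonhole in the local large--diagonal step (Theorem~\ref{thm:localized-factorization} gives $\|E\|\|P\|\leq(1+\eta)/\delta$), and the passage from the local diagrams to the global one is the quantitative glueing Proposition~\ref{pro:local}, which assembles the block--diagonal operators after a pigeonhole determines which choice $H_n\in\{D_n,\Id-D_n\}$ prevails on infinitely many summands.
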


There are several factorization results of the form~\eqref{eq:thm:factorization} regarding
bi--parameter Hardy spaces or their duals; among them are the following:
\begin{itemize}
\item $H^p(H^q)$ for $1 < p,q < \infty$, see~\cite{capon:1982}.

\item $H^1(H^1)$, see~\cite{mueller:1994}.

\item $\big(\sum_{n\in \mathbb N} H_n^1(H_n^1)\big)_r$
  and $\big(\sum_{n\in \mathbb N} \bmo_n(\bmo_n)\big)_r$, $1\leq r \leq \infty$,
  see~\cite{lechner_mueller:2014}.
  We remark that the latter result is only stated explicitly for the space
  $\big(\sum_{n\in \mathbb N} \bmo_n(\bmo_n)\big)_\infty$.
  The other assertions follow immediately from reasonable modifications of the proof given
  in~\cite{lechner_mueller:2014}.
\end{itemize}

Theorem~\ref{thm:factorization} is an extension of the above list.
Specifically, Theorem~\ref{thm:factorization} yields factorization results (with possibly different
constants as discussed in the introduction) for the following spaces:
\begin{equation*}
  \big(\sum_{n\in \mathbb N} H_n^1(H_n^s)\big)_r,
  \qquad \big(\sum_{n\in \mathbb N} H_n^s(H_n^1)\big)_r,
  \quad \big(\sum_{n\in \mathbb N} \bmo_n(H_n^s)\big)_r,
  \quad \big(\sum_{n\in \mathbb N} H^s_n(\bmo_n)\big)_r,
\end{equation*}
where $1 < s < \infty$, $1\leq r \leq \infty$.
The proof of Theorem~\ref{thm:factorization} is based on Bourgain's localization
method~\cite{bourgain:1983} and consists of the following three major steps:
\begin{itemize}
\item Reduction to diagonal operators.

\item Proving the following quantitative factorization problem:
  For all $n \in \mathbb N$ and $\Gamma, \eta > 0$ there exists an integer $N=N(n,\Gamma,\eta)$ so
  that the following holds:
  For any operator $D : X_N\rightarrow X_N$ with $\|D\|\leq \Gamma$ there exist
  operators $R,S$ so that for either $H=D$ or $H = \Id_{X_n} - D$ we have that
  \begin{equation}\label{eq:local-factorization}
    \vcxymatrix{X_n \ar[r]^{\Id} \ar[d]_S & X_n\\
      X_N \ar[r]_H & X_N \ar[u]_R}
    \qquad\text{where $\|R\|\|S\|\leq 2+\eta$}.
  \end{equation}

\item Glueing the finite dimensional pieces~\eqref{eq:local-factorization} together to obtain the
  factorization diagram~\eqref{eq:thm:factorization}.
\end{itemize}

Before we come to the next result, let us recall the notion of a primary Banach space, see e.g.~\cite{lindenstrauss-tzafriri:1977}:
A Banach space $X$ is \emph{primary} if for every bounded projection $Q : X\to X$, either $Q(X)$ or
$(\Id - Q)(X)$ is isomorphic to $X$.
An immediate consequence of Theorem~\ref{thm:factorization} is the subsequent Theorem~\ref{thm:primary}.
\begin{thm}\label{thm:primary}
  Let $1\leq p,q < \infty$ and $1\leq r \leq \infty$.
  Then $\big(\sum_{n\in \mathbb N} H^p_n(H^q_n)\big)_r$ and
  $\big(\sum_{n\in \mathbb N} H^p_n(H^q_n)^*\big)_r$ are primary.
  In particular, the following spaces are primary:
  \begin{equation*}
    \big(\sum_{n\in \mathbb N} H_n^1(H_n^s)\big)_r,
    \quad \big(\sum_{n\in \mathbb N} H_n^s(H_n^1)\big)_r,
    \quad \big(\sum_{n\in \mathbb N} \bmo_n(H_n^s)\big)_r,
    \quad \big(\sum_{n\in \mathbb N} H^s_n(\bmo_n)\big)_r,
  \end{equation*}
  where $1 < s < \infty$.
\end{thm}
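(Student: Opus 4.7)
The plan is to derive Theorem~\ref{thm:primary} from Theorem~\ref{thm:factorization} via Pe{\l}czy{\'n}ski's decomposition method, which is precisely the standard route from a factorization statement of the form~\eqref{eq:thm:factorization} to primariness. Fix $1\leq p,q<\infty$ and $1\leq r\leq\infty$, let $X_n$ be either $H^p_n(H^q_n)$ or $H^p_n(H^q_n)^*$, and set $X = \big(\sum_{n\in\mathbb N} X_n\big)_r$. Let $Q:X\to X$ be a bounded projection; we must show that $Q(X)$ or $(\Id-Q)(X)$ is isomorphic to~$X$.

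First, apply Theorem~\ref{thm:factorization} with $T=Q$ and some fixed $\eta>0$ (say $\eta=1$). This produces operators $R,S:X\to X$ and $H\in\{Q,\Id-Q\}$ such that $\Id_X = R\,H\,S$ with $\|R\|\|S\|\leq 3$. From $RHS = \Id_X$ I read off two facts: the operator $HS$ is an isomorphism from $X$ onto its image $HS(X)\subset H(X)$ (with left inverse $R$), and $P := HSR$ is a bounded projection of $X$ onto $HS(X)$, because $P^2 = HS(RHS)R = HSR = P$. Thus $X$ is isomorphic to the subspace $HS(X)$, which is complemented in $X$ and, by restricting $P$ to $H(X)$, also complemented in $H(X)$. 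Consequently $X$ is isomorphic to a complemented subspace of $H(X)$, while $H(X)$ is trivially complemented in $X$ (as the range of the projection $Q$ or $\Id-Q$).

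To invoke Pe{\l}czy{\'n}ski's decomposition method, as formulated in the introduction, I need $X\cong\big(\sum_{m\in\mathbb N} X\big)_r$. Expanding the definition,
\begin{equation*}
  \big(\sum_{m\in\mathbb N} X\big)_r
  \;=\; \big(\sum_{m\in\mathbb N}\big(\sum_{n\in\mathbb N} X_n\big)_r\big)_r
  \;\cong\; \big(\sum_{(m,n)\in\mathbb N^2} X_n\big)_r.
\end{equation*}
For the case $X_n=H^p_n(H^q_n)$, Proposition~\ref{pro:iso-iso} already gives the isometric identification $\big(\sum_{m,n}H^p_m(H^q_n)\big)_r\cong\big(\sum_n H^p_n(H^q_n)\big)_r$; since the iterated sum on the left above is sandwiched between $\big(\sum_n H^p_n(H^q_n)\big)_r$ (each diagonal summand appears) and $\big(\sum_{m,n}H^p_m(H^q_n)\big)_r$ (after re-indexing), a routine complementation argument using the fact that $H^p_n(H^q_n)$ is complemented in $H^p_{n'}(H^q_{n'})$ for $n\leq n'$ yields $\big(\sum_m X\big)_r\cong X$. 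The dual case $X_n=H^p_n(H^q_n)^*$ is handled in exactly the same way, since duality preserves the direct sum structure and the same Proposition~\ref{pro:iso-iso} applies. With this self-reproduction in hand, Pe{\l}czy{\'n}ski's method gives $X\cong H(X)$, proving primariness.

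Finally, the explicit list of primary spaces in the theorem statement follows by specializing: for $1<s<\infty$ the duality identifications recalled in the introduction give $H^1_n(H^s_n)^* = \bmo_n(H^{s'}_n)$, $H^s_n(H^1_n)^*=H^{s'}_n(\bmo_n)$, and analogously for their partners. Thus the four families listed are instances of $\big(\sum_n H^p_n(H^q_n)\big)_r$ or $\big(\sum_n H^p_n(H^q_n)^*\big)_r$, and therefore primary. The only substantive step is the self-reproduction $X\cong\big(\sum_m X\big)_r$; everything else is a formal consequence of Theorem~\ref{thm:factorization}.
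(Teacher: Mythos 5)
Your proof is correct and is essentially the paper's own argument: factor the identity through $H=Q$ or $H=\Id-Q$ using Theorem~\ref{thm:factorization}, note that $H(X)$ is complemented in $X$ while $X$ is isomorphic to a complemented subspace of $H(X)$, establish the self-reproduction $X\cong\big(\sum X\big)_r$, and conclude with Pe{\l}czy{\'n}ski's decomposition method. The only step to phrase more carefully is the self-reproduction: mutual complementation alone does not yield an isomorphism (there is no Schroeder--Bernstein theorem for Banach spaces), so either invoke the decomposition method a second time at that point, as the paper does, or observe that Proposition~\ref{pro:iso-iso} applies directly with $X_{m,n}=H^p_n(H^q_n)$ (constant in $m$, increasing in $n$), giving $\big(\sum_{m}X\big)_r\cong\big(\sum_{(m,n)}H^p_n(H^q_n)\big)_r\cong X$ at once, with the dual case handled identically.
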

Note that if $p=q=1$, Theorem~\ref{thm:primary} follows from~\cite{lechner_mueller:2014}.

\section{The local product conditions}\label{sec:clpc}\hfill

\noindent
In Section~\ref{subsec:statement-capons-local} and Section~\ref{subsec:consequences-capons-local} we discuss
conditions (the local product conditions~\textref[P]{enu:p1}--\textref[P]{enu:p4}) under which a
block basis of the bi--parameter Haar system is equivalent to the bi--parameter Haar system, and
that the orthogonal projection onto this block basis is bounded in $H^p(H^q)$.
Section~\ref{subsec:statement-capons-local} and~\ref{subsec:consequences-capons-local} are a
compilation of definitions and results of~\cite{laustsen:lechner:mueller:2015}.
Section~\ref{sec:reit-capons-local} is new; it contains the result that reiterating the local product conditions yields again the local product conditions.
The local product conditions were modeled after Capon's conditions isolated in~\cite{capon:1982}.

\subsection{Statement of the local product conditions}\label{subsec:statement-capons-local}\hfill

\noindent
Let $\mathscr A\subset \drec$ be an index set.
For each $R\in \mathscr A$ let $\mathscr X_R, \mathscr Y_R\subset \dint$ denote non-empty
collections of dyadic intervals that define the collection of dyadic rectangles $\mathscr B_R$ by
\begin{equation}\label{eq:symbols-0}
  \mathscr B_R
  = \mathscr X_R\times \mathscr Y_R
  = \{K\times L\, :\, K\in \mathscr X_R,\, L\in \mathscr Y_R\},
  \qquad R\in \mathscr A,
\end{equation}
see Figure~\ref{fig:2dhaar-layout} and\ref{fig:2dhaar-one-step}.
\begin{figure}[bht]
  \begin{center}
    \includegraphics[scale=0.25]{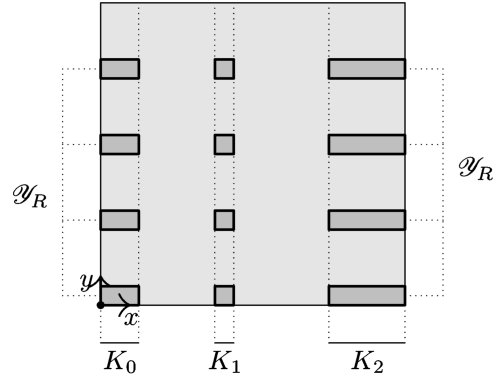}
  \end{center}
  \caption{Given a dyadic index rectangle $R\in \mathscr A$, this figure depicts
    the collection of dark gray rectangles $\mathscr B_R$, which are contained in the light gray
    unit square.
    $\mathscr B_R$ is of the form $\mathscr B_R = \mathscr X_R\times \mathscr Y_R$, where
    $\mathscr X_R = \{K_0,K_1,K_2\}$.
  }
  \label{fig:2dhaar-layout}
\end{figure}
\begin{figure}[bht]
  \begin{center}
    \includegraphics[scale=0.225]{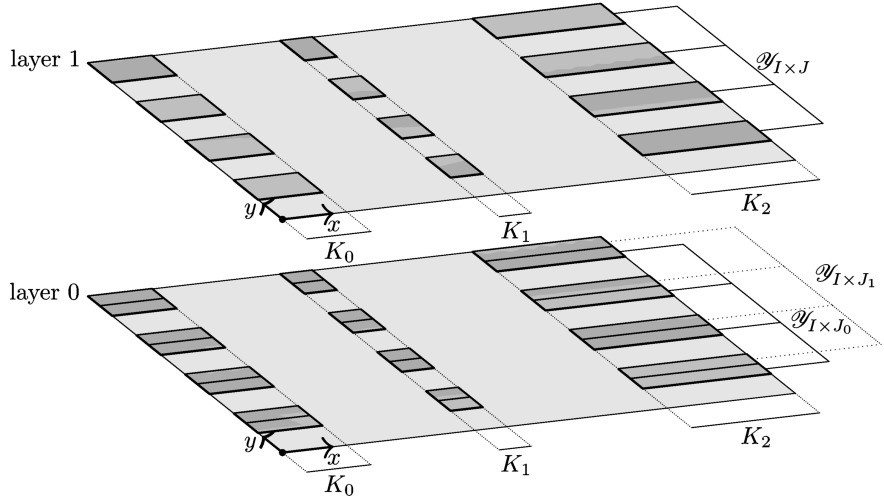}
  \end{center}
  \caption{The dyadic index rectangles
    $I\times J$, $I\times J_0$ and $I\times J_1$ in $\mathscr A$ are such that
    $J_0\union J_1 = J$ and $J_0\cap J_1 = \emptyset$.
    This figure depicts the collections
    $\mathscr B_{I\times J} = \mathscr X_{I\times J}\times \mathscr Y_{I\times J}$ in the top layer
    (see also Figure~\ref{fig:2dhaar-layout}), and
    $\mathscr B_{I\times J_0} = \mathscr X_{I\times J_0}\times \mathscr Y_{I\times J_0}$ and
    $\mathscr B_{I\times J_1} = \mathscr X_{I\times J_1}\times \mathscr Y_{I\times J_1}$ in the
    bottom layer, where $\mathscr X_{I\times J} = \{K_0,K_1,K_2\}$.
    Each interval in $\mathscr Y_{I\times J}$ is split in two intervals, which are then placed into
    $\mathscr Y_{I\times J_0}$ and $\mathscr Y_{I\times J_1}$, respectively.
  } 
  \label{fig:2dhaar-one-step}
\end{figure}
For all $R\in \mathscr A$ and $x,y\in [0,1)$ we define
\begin{equation}\label{eq:block-basis}
  b_R(x,y)
  = \sum_{K\times L\in \mathscr B_R} h_{K\times L}(x,y)
  = \Big( \sum_{K\in \mathscr X_R} h_K(x) \Big)
  \Big( \sum_{L\in \mathscr Y_R} h_L(y) \Big).
\end{equation}
For the second equality in~\eqref{eq:block-basis} see~\eqref{eq:symbols-0}.
We call $(b_R : R\in \drec)$ the \emph{block basis generated by} $(\mathscr B_R : R\in \drec)$.

We now introduce some notation.
For $R \in\mathscr A$ we set
\begin{equation}\label{eq:symbols-1}
  X_R = \bigcup\{ K : K\in \mathscr X_R \}
  \qquad\text{and}\qquad
  Y_R = \bigcup\{L : L\in \mathscr Y_R\}.
\end{equation}
For each $I_0\times J_0\in \mathscr A$ we take the following unions:
\begin{equation}\label{eq:symbols-2}
  X_{I_0}
  = \bigcup\{ X_{I_0\times J} : I_0\times J\in \mathscr A\},
  \quad
  Y_{J_0}
  = \bigcup\{ Y_{I\times J_0} : I\times J_0\in \mathscr A\}.
\end{equation}
By~\eqref{eq:symbols-2} we have that for all $I\times J\in \mathscr A$
\begin{equation}\label{eq:XY-inclusions}
  X_{I\times J} \subset X_I
  \qquad\text{and}\qquad
  Y_{I\times J} \subset Y_J.
\end{equation}

We say that $\{\mathscr B_{I\times J}: I\times J\in\mathscr A\}$ satisfies the
\emph{local product conditions} with constants $C_X,C_Y>0$, if the following four
conditions~\textref[P]{enu:p1}, \textref[P]{enu:p2}, \textref[P]{enu:p3} and~\textref[P]{enu:p4}
hold.
\begin{enumerate}[\quad(P1)]
\item\label{enu:p1}%
  For all $R\in \mathscr A$ the collection $\mathscr B_R$ consists of pairwise
  disjoint dyadic rectangles, and for all $R_0,R_1\in\mathscr A$ with $R_0\neq R_1$ we have
  $\mathscr B_{R_0} \isect \mathscr B_{R_1} = \emptyset$.
\item\label{enu:p2}%
  For all $I\times J, I_0\times J_0, I_1\times J_1\in\mathscr A$ with
  $I_0 \cap I_1 = \emptyset$, $I_0\union I_1\subset I$ and
  $J_0 \cap J_1 = \emptyset$, $J_0\union J_1\subset J$ we have the inclusions
  \begin{align*}
    X_{I_0}\isect X_{I_1} &= \emptyset &X_{I_0}\union X_{I_1} & \subset X_I,\\
    Y_{J_0}\isect Y_{J_1} &= \emptyset &Y_{J_0}\union Y_{J_1} & \subset Y_J.
  \end{align*}
\item\label{enu:p3}%
  For all $I\times J\in\mathscr A$, we have
  \begin{equation*}
    C_X^{-1} |I| \leq |X_I| \leq C_X |I|
    \qquad\text{and}\qquad
    C_Y^{-1} |J| \leq |Y_J| \leq C_Y |J|.
  \end{equation*}
\item \label{enu:p4}%
  For all $I_0\times J_0, I\times J\in \mathscr A_1$ with $I_0\times J_0\subset I\times J$ and
  for every $K\in \mathscr X_{I\times J}$ and $L\in \mathscr Y_{I\times J}$, we have
  \begin{equation*}
    \frac{|K\isect X_{I_0}|}{|K|} \geq  C_X^{-1}\frac{|X_{I_0}|}{|X_I|}
    \qquad\text{and}\qquad
    \frac{|L\isect Y_{J_0}|}{|L|} \geq  C_Y^{-1}\frac{|Y_{J_0}|}{|Y_J|}.
  \end{equation*}
\end{enumerate}

\subsection{Implications from the local product
  conditions}\label{subsec:consequences-capons-local}\hfill

\noindent
The local product conditions~\textref[P]{enu:p1}--\textref[P]{enu:p4} ensure that the block basis
$(b_R : R \in \mathscr A)$ given by~\eqref{eq:block-basis} is equivalent to the Haar system
$(h_R : R \in \mathscr A)$ in $H^p(H^q)$, for $1 < p, q < \infty$, and that the orthogonal
projection $Q^{(\varepsilon)}$ (see~\eqref{eq:projection-alpha-estimate} below) onto
$(b_R : R \in \mathscr A)$ is bounded in $H^p(H^q)$, $1 < p, q < \infty$.

In particular, Theorem~\ref{thm:projection} includes the following endpoints:
\begin{equation*}
  H^1(H^s),
  \qquad
  H^s(H^1),
  \qquad
  H^s(\bmo),
  \qquad
  \bmo(H^s),
  \qquad 1 < s < \infty.
\end{equation*}
Theorem~\ref{thm:projection} is taken from~\cite{laustsen:lechner:mueller:2015}.
\begin{thm}\label{thm:projection}
  Let $1 \leq p,q < \infty$.
  Assume that $(\mathscr B_R\, :\, R\in \drec)$ satisfies the local product
  conditions~\textref[P]{enu:p1}--\textref[P]{enu:p4} with constants~$C_X$ and~$C_Y$.
  Let $\varepsilon = (\varepsilon_Q : Q\in \drec)$ be a scalar sequence with $|\varepsilon_Q|=1$,
  and let the block basis of the bi--parameter Haar system $b_R^{(\varepsilon)}$ be given by
  \begin{equation}\label{eq:block-basis-alpha}
    b_R^{(\varepsilon)} = \sum_{Q\in \mathscr B_R} \varepsilon_Q h_Q,
    \qquad R\in \drec.
  \end{equation}
  Then the following assertions are true:
  \begin{enumerate}[(i)]
  \item For all sequences of scalars $a_R$, $R\in \drec$, we have that
    \begin{equation}\label{eq:local-product-condition-i}
      C^{-1} \Big\|\sum_R a_R h_R\Big\|
      \leq \Big\|\sum_R a_R b_R^{(\varepsilon)} \Big\|
      \leq C \Big\|\sum_R a_R h_R\Big\|.
    \end{equation}
    The above norms are either all the norm $\|\cdot\|_{H^p(H^q)}$, or they are all the norm
    $\|\cdot\|_{H^p(H^q)^*}$.
    \label{enu:local-product-condition-i}

  \item The orthogonal projection $Q^{(\varepsilon)}$ given by
    \begin{equation}\label{eq:projection-alpha}
      Q^{(\varepsilon)} f = \sum_{R\in \drec}
      \frac{\langle b_R^{(\varepsilon)}, f\rangle}{\|b_R^{(\varepsilon)}\|_2^2}
      b_R^{(\varepsilon)}
    \end{equation}
    satisfies the estimates
    \begin{equation}\label{eq:projection-alpha-estimate}
      \begin{aligned}
        \|Qf\|_{H^p(H^q)} & \leq C \|f\|_{H^p(H^q)},
        & f &\in H^p(H^q),\\
        \|Qf\|_{H^p(H^q)^*} & \leq C \|f\|_{H^p(H^q)^*},
        & f &\in H^p(H^q)^*.
      \end{aligned}
    \end{equation}
    \label{enu:local-product-condition-iii}
  \end{enumerate}
  There exists a universal integer $k$ such that $C \leq C_X^k C_Y^k$.
\end{thm}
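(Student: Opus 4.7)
The plan is to derive Theorem~\ref{thm:projection} from the one-parameter Capon theorem \cite{capon:1982} by a two-step reduction: first removing the signs $\varepsilon$ via unconditionality, then iterating the one-parameter estimates coordinate-by-coordinate using the product factorization of $b_R$. Since the bi--parameter Haar system is $1$--unconditional in both $H^p(H^q)$ and the dual norm $\|\cdot\|_{H^p(H^q)^*}$ of~\eqref{eq:dual-norm}, the map $\sum a_Q h_Q \mapsto \sum a_Q \varepsilon_Q h_Q$ is an isometry on either space, so one may replace $b_R^{(\varepsilon)}$ throughout by the signless block function
\begin{equation*}
  b_R \;=\; \sum_{Q\in \mathscr B_R} h_Q \;=\; \Bigl(\sum_{K\in \mathscr X_R} h_K\Bigr)\Bigl(\sum_{L\in \mathscr Y_R} h_L\Bigr)
\end{equation*}
and prove~\eqref{eq:local-product-condition-i} and~\eqref{eq:projection-alpha-estimate} with the same constants.

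Next, write $b_{I\times J}(x,y) = \beta_I^{(x)}(x)\,\beta_J^{(y)}(y)$ with $\beta_I^{(x)} = \sum_{K\in \mathscr X_{I\times J}} h_K$ and $\beta_J^{(y)} = \sum_{L\in \mathscr Y_{I\times J}} h_L$. The local product conditions~\textref[P]{enu:p1}--\textref[P]{enu:p4}, restricted to each coordinate, are precisely Capon's one-dimensional conditions on $(\mathscr X_I)$ and $(\mathscr Y_J)$: \textref[P]{enu:p2} yields the nesting and disjointness of $(X_I)$ and $(Y_J)$ in their respective coordinates, \textref[P]{enu:p3} gives the measure comparability $|X_I|\approx|I|$ and $|Y_J|\approx|J|$, and~\textref[P]{enu:p4} supplies the regularity needed at the endpoints. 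Capon's theorem then delivers one-parameter norm equivalences between $\sum c_I h_I$ and $\sum c_I \beta_I^{(x)}$ in $H^p$ (analogously in $H^q$) and the boundedness of the one-dimensional orthogonal projections $P_x$ and $P_y$ in $H^p$ and $H^q$ respectively, with constants bounded by universal powers of $C_X$ and $C_Y$.

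To lift these estimates to the mixed-norm setting, I would exploit that the square function in~\eqref{eq:HpHq-norm} is a composition of an inner $L^q_y$-norm of a $y$-square function and an outer $L^p_x$-norm of an $x$-square function, and that on the block span one has the tensor factorization $Q = P_x\tprod P_y$. Freezing $x$ and applying the one-parameter $H^q$-estimate in $y$, then integrating in $x$ and applying the one-parameter $H^p$-estimate in $x$, yields~\eqref{eq:local-product-condition-i} in $\|\cdot\|_{H^p(H^q)}$ together with the first inequality of~\eqref{eq:projection-alpha-estimate}. Since $Q^{(\varepsilon)}$ is an $L^2$--orthogonal projection, it is self-adjoint with respect to the pairing appearing in~\eqref{eq:dual-norm}, so its operator norm on $H^p(H^q)^*$ equals its operator norm on $H^p(H^q)$; this gives the second inequality of~\eqref{eq:projection-alpha-estimate}. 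The dual-norm equivalence~\eqref{eq:local-product-condition-i} then follows by combining the $1$-unconditionality of the Haar basis in $H^p(H^q)^*$ with the biorthogonality between $(b_R^{(\varepsilon)}/\|b_R^{(\varepsilon)}\|_2^2)$ and $(b_R^{(\varepsilon)})$, and the accumulated constants telescope into a universal power of $C_XC_Y$ as asserted.

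The main obstacle is the endpoint analysis at $p=1$ and $q=1$. In this range the one-parameter boundedness of $P_x$ and $P_y$ in $H^1$, and by duality in $\bmo$, depends on the full strength of the regularity condition~\textref[P]{enu:p4} rather than just \textref[P]{enu:p2}--\textref[P]{enu:p3}; simultaneously, the mixed-norm iteration must be carried out while expressing all dual estimates purely in terms of the pairing~\eqref{eq:dual-norm}, so as to avoid the $p$- and $q$-dependent isomorphism constants between $H^p(H^q)^*$ and its concrete realizations $\bmo(H^{q'})$, $H^{p'}(\bmo)$, and $\bmo(\bmo)$ noted in the introduction.
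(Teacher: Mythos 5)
First, a point of comparison: the paper does not prove Theorem~\ref{thm:projection} at all --- it is imported from \cite{laustsen:lechner:mueller:2015} --- so the relevant benchmark is the proof given there (and Capon's argument \cite{capon:1982} for $1<p,q<\infty$), which is a genuinely bi--parameter induction, not an iteration of one--parameter results. Your opening reduction is fine: since the square function in \eqref{eq:HpHq-norm} is invariant under unimodular sign changes, the multiplier $M_\varepsilon h_Q=\varepsilon_Q h_Q$ is an isometry of $H^p(H^q)$ and of its dual, and $Q^{(\varepsilon)}=M_\varepsilon Q^{(1)}M_\varepsilon$, so the signs may indeed be discarded. The gap is in the central step. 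You write $b_{I\times J}=\beta^{(x)}_I\,\beta^{(y)}_J$ with factors depending only on $I$, respectively $J$, and conclude that $Q^{(\varepsilon)}=P_x\otimes P_y$. But under \textref[P]{enu:p1}--\textref[P]{enu:p4} the coordinate collections $\mathscr X_{I\times J}$ and $\mathscr Y_{I\times J}$ depend on the whole rectangle $I\times J$; only their unions $X_I=\bigcup_J X_{I\times J}$ and $Y_J=\bigcup_I Y_{I\times J}$ are constrained by \textref[P]{enu:p2} and \textref[P]{enu:p3}. Hence $(b_R)$ is in general not a tensor product of two one--parameter block bases, there are no one--parameter projections $P_x,P_y$ whose tensor product is $Q^{(\varepsilon)}$, and the ``freeze $x$, apply the $H^q$ estimate in $y$, then integrate and apply the $H^p$ estimate in $x$'' scheme does not apply: for fixed $x$ the relevant family of $y$--blocks changes with $x$ (through which $K\in\mathscr X_{I\times J}$ contains $x$), and controlling this dependence is exactly the role of the conditional--measure condition \textref[P]{enu:p4} inside a bi--parameter stopping--time argument. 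This is not a removable technicality for the paper's purposes: the block bases produced in Theorem~\ref{thm:quasi-diag} have $\mathscr X_{I\times J}=\{I\}$ but $\mathscr Y_{I\times J}$ depending on both $I$ and $J$, so a version of Theorem~\ref{thm:projection} valid only for tensor--product block bases would not cover the intended application.

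Second, even where a coordinate-wise reduction would be legitimate, the input you invoke does not exist in the stated generality: Capon's theorem concerns $1<p,q<\infty$, whereas the cases this paper actually needs are the endpoints $p=1$ or $q=1$ and the dual norm \eqref{eq:dual-norm}; these are precisely the content of \cite{laustsen:lechner:mueller:2015}, and your final paragraph acknowledges them as an unresolved ``obstacle'' rather than proving them. The duality remarks also need more care: self-adjointness of $Q^{(\varepsilon)}$ does transfer the $H^p(H^q)$ bound to the dual norm (modulo a weak-star density argument), but the dual-norm half of \eqref{eq:local-product-condition-i}, with the same coefficients $a_R$ on both sides, does not follow from ``unconditionality plus biorthogonality'' alone; it requires pairing $\sum_R a_R b_R^{(\varepsilon)}$ against suitable test functions, the boundedness of $Q^{(\varepsilon)}$, and quantitative compatibility of $\|b_R^{(\varepsilon)}\|_2^2$ with $|R|$, which again rests on \textref[P]{enu:p3}--\textref[P]{enu:p4}. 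As it stands the proposal has a genuine gap at its core step and leaves open exactly the endpoint and dual cases for which the theorem is quoted.
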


We remark that by~\eqref{eq:symbols-0}, we can rewrite~\eqref{eq:block-basis-alpha} in the following ways:
\begin{equation}\label{eq:block-basis-alpha-local-product}
  \begin{aligned}
    b_R^{(\varepsilon)}(x,y) 
    &= \sum_{K\in \mathscr X_R} h_K(x) \sum_{L\in \mathscr Y_R} \varepsilon_{K\times L} h_L(y),
    \qquad R\in \drec,\\
    b_R^{(\varepsilon)}(x,y) 
    &= \sum_{L\in \mathscr Y_R} h_L(y) \sum_{K\in \mathscr X_R} \varepsilon_{K\times L} h_K(x),
    \qquad R\in \drec.
  \end{aligned}
\end{equation}

\subsection{Reiterating the local product conditions}\label{sec:reit-capons-local}\hfill

\noindent
Let $(e_I)$ and $(f_J)$ denote block bases of the one parameter Haar system, such that
$(e_I\otimes f_J)$ satisfies \textref[P]{enu:p1}--\textref[P]{enu:p4}.
Moreover, we will assume that the regularity assumptions
Lemma~\ref{lem:product:local-product}~\eqref{enu:lem:product:local-product:i}
and~\eqref{enu:lem:product:local-product:ii} are satisfied.
Assumption~\eqref{enu:lem:product:local-product:i} really is a one--parameter version
of \textref[P]{enu:p1}.
The more interesting assumption is~\eqref{enu:lem:product:local-product:ii}, which says that the
inclusion of two index intervals $I_0\subset I$ (respectively $J_0\subset J$) implies the inclusion
of each $E_0\in \mathscr E_{I_0}$ in some $E\in \mathscr E_I$ (respectively of each
$F_0\in \mathscr F_{I_0}$ in some $F\in \mathscr F_I$).
Lemma~\ref{lem:product:local-product} tells us that if one uses $(e_I\otimes f_J)$ instead of the
bi--parameter Haar system $(h_I \otimes h_J)$ to build a bi--parameter block basis according to
\textref[P]{enu:p1}--\textref[P]{enu:p4}, then the result is a block basis of the bi--parameter Haar
system satisfying the local product conditions~\textref[P]{enu:p1}--\textref[P]{enu:p4}.
\begin{lem}\label{lem:product:local-product}
  Let $\mathscr A$ be a collection of index rectangles.
  Let
  \begin{equation*}
    (\mathscr E_I\times \mathscr F_J : I\times J\in \mathscr A)
  \end{equation*}
  be a sequence of dyadic rectangles satisfying \textref[P]{enu:p1}--\textref[P]{enu:p4} with
  constants $C_E$ and $C_F$.
  Moreover, we assume the following:
  \begin{enumerate}[(i)]
  \item For all $I\times J\in \mathscr A$ the collections $\mathscr E_I$ and $\mathscr F_J$ each
    consist of pairwise disjoint intervals, and $\mathscr E_{I_0}\cap \mathscr E_{I_1} = \emptyset$
    for all $I_0\times J_0,I_1\times J_1\in \mathscr A$ with $I_0\neq I_1$, and
    $\mathscr F_{J_0}\cap \mathscr F_{J_1} = \emptyset$, for all
    $I_0\times J_0,I_1\times J_1\in \mathscr A$ with and $J_0\neq J_1$.
    \label{enu:lem:product:local-product:i}
  \item Whenever $I_0\times J_0, I\times J\in \mathscr A$ with $I_0\subset I$ and $J_0\subset J$,
    then
    \begin{align*}
      \text{for all $E_0\in \mathscr E_{I_0}$ there exists an $E\in \mathscr E_I$ such that
      $E_0\subset E$},\\
      \text{for all $F_0\in \mathscr F_{I_0}$ there exists an $F\in \mathscr F_I$ such that
      $F_0\subset F$}.
    \end{align*}
    \label{enu:lem:product:local-product:ii}

  \end{enumerate}
  Let
  \begin{equation*}
    \Big( \mathscr B_{E\times F} : E\times F\in \bigcup_{I\times J\in \mathscr A}
    \mathscr E_I\times \mathscr F_J \Big)
  \end{equation*}
  be a sequence of collections of dyadic rectangles satisfying the local product conditions
  \textref[P]{enu:p1}--\textref[P]{enu:p4} with constants $C_X$ and $C_Y$.
  For each $I\times J\in \mathscr A$ define the collection $\widetilde{\mathscr B}_{I\times J}$ by
  \begin{equation*}
    \widetilde{\mathscr B}_{I\times J}
    = \bigcup_{\substack{E\in \mathscr E_I\\F\in \mathscr F_J}} \mathscr B_{E\times F}.
  \end{equation*}
  Then the sequence of collections $(\widetilde{\mathscr B}_{I\times J})$ satisfies the local
  product conditions \textref[P]{enu:p1}--\textref[P]{enu:p4} with constants $C_EC_X^3$ and
  $C_FC_Y^3$.
\end{lem}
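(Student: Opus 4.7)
The plan is to define
\[
  \widetilde{\mathscr X}_{I\times J}:=\bigcup_{E\in \mathscr E_I,\, F\in \mathscr F_J}\mathscr X_{E\times F}
  \quad\text{and}\quad
  \widetilde{\mathscr Y}_{I\times J}:=\bigcup_{E\in \mathscr E_I,\, F\in \mathscr F_J}\mathscr Y_{E\times F},
\]
recover the product form $\widetilde{\mathscr B}_{I\times J}=\widetilde{\mathscr X}_{I\times J}\times \widetilde{\mathscr Y}_{I\times J}$ demanded by \eqref{eq:symbols-0}, and then verify \textref[P]{enu:p1}--\textref[P]{enu:p4} in turn. The crucial preparatory observation is that \eqref{enu:lem:product:local-product:i} forces each $E$ (resp.\ $F$) to belong to exactly one $\mathscr E_I$ (resp.\ $\mathscr F_J$), so the marginal supplied by \eqref{eq:symbols-2} simplifies to a disjoint union $\widetilde X_I=\bigsqcup_{E\in \mathscr E_I}X_E$, where $X_E$ is the inner-level $E$-marginal (in the sense of \eqref{eq:symbols-2}); disjointness across $E\in \mathscr E_I$ comes from inner~\textref[P]{enu:p2}, and an analogous decomposition holds for $\widetilde Y_J$. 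Writing $E_I:=\bigcup \mathscr E_I$ for the intermediate-level support then sets up a clean two-layer structure to exploit.

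Conditions \textref[P]{enu:p1}, \textref[P]{enu:p2} and \textref[P]{enu:p3} reduce to routine case analysis. For \textref[P]{enu:p1}: rectangles from a common $\mathscr B_{E\times F}$ are disjoint by inner~\textref[P]{enu:p1}; two rectangles coming from distinct pairs $(E_1,F_1)\neq (E_2,F_2)$ in $\mathscr E_I\times \mathscr F_J$ are separated because $E_1\neq E_2$ or $F_1\neq F_2$, combined with the interval-disjointness supplied by intermediate~\textref[P]{enu:p1}; and across distinct outer rectangles \eqref{enu:lem:product:local-product:i} together with inner~\textref[P]{enu:p1} do the rest. For \textref[P]{enu:p2}, the disjointness $\widetilde X_{I_0}\cap \widetilde X_{I_1}=\emptyset$ for $I_0\cap I_1=\emptyset$ follows from $\widetilde X_{I_j}\subset E_{I_j}$ and intermediate~\textref[P]{enu:p2}, while the inclusion $\widetilde X_{I_0}\subset \widetilde X_I$ for $I_0\subset I$ follows by lifting each $E_0\in \mathscr E_{I_0}$ to some $E\in \mathscr E_I$ with $E_0\subset E$ via \eqref{enu:lem:product:local-product:ii} and then applying inner~\textref[P]{enu:p2} to obtain $X_{E_0}\subset X_E$. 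Finally \textref[P]{enu:p3} is a two-level chain: inner~\textref[P]{enu:p3} pinches $|\widetilde X_I|=\sum_{E\in \mathscr E_I}|X_E|$ around $|E_I|$ by factors of $C_X$, and intermediate~\textref[P]{enu:p3} pinches $|E_I|$ around $|I|$ by factors of $C_E$.

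The heart of the proof is \textref[P]{enu:p4}. Given $K\in \widetilde{\mathscr X}_{I\times J}$, let $(E,F)\in \mathscr E_I\times \mathscr F_J$ be the unique pair with $K\in \mathscr X_{E\times F}$. I plan to (1)~decompose $K\cap \widetilde X_{I_0}=\bigsqcup_{E_0\in \mathscr E_{I_0},\,E_0\subset E}K\cap X_{E_0}$; (2)~for each such $E_0$ produce a companion $F_0\in \mathscr F_{J_0}$ with $F_0\subset F$---existence follows because intermediate~\textref[P]{enu:p4} applied to $I_0\times J_0\subset I\times J$ forces $|F\cap \bigcup \mathscr F_{J_0}|>0$, and dyadic nesting then pins $F_0$ inside $F$---so that $E_0\times F_0\subset E\times F$ with both in the inner index set; (3)~invoke inner~\textref[P]{enu:p4} on each summand and sum, obtaining $|K\cap \widetilde X_{I_0}|/|K|\geq C_X^{-1}|\widetilde X_{I_0}\cap E|/|X_E|$; (4)~use inner~\textref[P]{enu:p3} to pinch $|\widetilde X_{I_0}\cap E|$ and $|X_E|$ against $|E\cap E_{I_0}|$ and $|E|$; (5)~apply intermediate~\textref[P]{enu:p4} to pass to $C_E^{-1}|E_{I_0}|/|E_I|$; and (6)~use inner~\textref[P]{enu:p3} one last time to replace $|E_{I_0}|/|E_I|$ by $|\widetilde X_{I_0}|/|\widetilde X_I|$ up to a further factor of $C_X$.

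The hard part will be the bookkeeping in steps (3)--(6): each use of inner~\textref[P]{enu:p3} costs a factor of $C_X$, and step~(2) genuinely needs intermediate-level input rather than just the inner one. The chained estimate produces a combined constant of the shape $C_E\cdot C_X^k$ for a small universal $k$, which is bounded by $C_EC_X^3$ (and symmetrically $C_FC_Y^3$) once the arithmetic is carried out. A smaller, more definitional obstacle is the product identity $\widetilde{\mathscr B}_{I\times J}=\widetilde{\mathscr X}_{I\times J}\times \widetilde{\mathscr Y}_{I\times J}$ itself, which rests on \eqref{enu:lem:product:local-product:i}, intermediate~\textref[P]{enu:p1} and inner~\textref[P]{enu:p1}--\textref[P]{enu:p2} together preventing any cross-term rectangle from appearing in the product but missing from the union.
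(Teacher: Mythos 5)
Your proposal is correct and follows essentially the same route as the paper's proof: introduce the marginal collections $\widetilde{\mathscr X}_{I\times J}$ and $\widetilde{\mathscr Y}_{I\times J}$, verify \textref[P]{enu:p1}--\textref[P]{enu:p3} by the two-level case analysis, and obtain \textref[P]{enu:p4} by summing the inner-level estimate over $E_0\in \mathscr E_{I_0}$ with $E_0\subset E$ and then chaining inner \textref[P]{enu:p3} with intermediate \textref[P]{enu:p4}. The only differences are cosmetic: your step~(2) makes explicit the companion interval $F_0\subset F$ that the paper uses implicitly, and your final conversion step~(6) (which the paper omits, stopping at the bound $C_E^{-1}C_X^{-3}|I_0|/|I|$) would cost a factor $C_X^{2}$ rather than $C_X$ --- a harmless bookkeeping discrepancy, since only some universal power of the constants matters for Theorem~\ref{thm:projection}.
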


\begin{rem}\label{rem:thm:projection}
  Consequently, Theorem~\ref{thm:projection} applies to the collections $\widetilde{\mathscr B}_R$
  and the block basis of the bi--parameter Haar system $\widetilde b_R^{(\varepsilon)}$ given by
  \begin{equation*}
    \widetilde b_{I\times J}^{(\varepsilon)}
    = \sum_{Q\in \widetilde{\mathscr B}_{I\times J}} \varepsilon_Q h_Q
    = \sum_{E\in \mathscr E_I} \sum_{F\in \mathscr F_J} b_{E\times F}^{(\varepsilon)},
    \qquad I\times J\in \drec,
  \end{equation*}
  where the block basis $(b_{E\times F}^{(\varepsilon)})$ is given by
  \begin{equation*}
    b_{E\times F}^{(\varepsilon)} = \sum_{Q\in \mathscr B_{E\times F}} \varepsilon_Q h_Q,
    \qquad E\times F\in \bigcup_{I\times J\in \mathscr \drec} \mathscr E_I\times \mathscr F_J.
  \end{equation*}
  
\end{rem}

\begin{proof}[Proof of Lemma~\ref{lem:product:local-product}]
  Within this proof, we shall make use of the following convention.
  Whenever there is an indentifier of an object which uses the script font (i.e. $\mathscr Z$), then
  the same identifier in roman font denotes its pointset (i.e. $Z = \bigcup \mathscr Z$).
  As in Section~\ref{subsec:statement-capons-local}, we write
  \begin{equation*}
    \mathscr B_{E\times F}
    = \mathscr X_{E\times F}\times \mathscr Y_{E\times F},
    \qquad E\times F\in \bigcup_{I\times J\in \mathscr A} \mathscr E_I\times \mathscr F_J.
  \end{equation*}
  Firstly, we define the collections of dyadic intervals $\widetilde{\mathscr X}_{I\times J}$ by
  \begin{equation}\label{proof:lem:product:local-product:1}
    \widetilde{\mathscr X}_{I\times J}
    = \bigcup_{\substack{E\in \mathscr E_I\\F\in \mathscr F_J}} \mathscr X_{E\times F},
    \qquad I\times J\in \mathscr A.
  \end{equation}
  Secondly, we define the collections of dyadic intervals $\widetilde{\mathscr Y}_{I\times J}$ by
  \begin{equation}\label{proof:lem:product:local-product:2}
    \widetilde{\mathscr Y}_{I\times J}
    = \bigcup_{\substack{E\in \mathscr E_I\\F\in \mathscr F_J}} \mathscr Y_{E\times F},
    \qquad I\times J\in \mathscr A.
  \end{equation}
  Thirdly, observe that the following identity is true:
  \begin{equation}\label{proof:lem:product:local-product:3}
    \widetilde{\mathscr B}_{I\times J}
    = \widetilde{\mathscr X}_{I\times J} \times \widetilde{\mathscr Y}_{I\times J},
    \qquad I\times J\in \mathscr A.
  \end{equation}
  
  We now show \textref[P]{enu:p1} for $(\widetilde{\mathscr B}_{I\times J})$.
  Let $I_0\times J_0, I_1\times J_1 \in \mathscr A$ and assume that
  $\mathscr B_{I_0\times J_0} \cap \mathscr B_{I_1\times J_1} \neq \emptyset$.
  Then there exist dyadic intervals $E_0\in \mathscr E_{I_0}$, $E_1\in \mathscr E_{I_1}$ and
  $F_0\in \mathscr F_{I_0}$, $F_1\in \mathscr F_{I_1}$ such that
  $\mathscr B_{E_0\times F_0}\cap \mathscr B_{E_1\times F_1}\neq \emptyset$.
  Since $(\mathscr B_{E\times F})$ satisfies \textref[P]{enu:p1}, we infer $E_0=E_1$ and $F_0=F_1$.
  Thus, $\mathscr E_{I_0}\cap \mathscr E_{I_1} \neq \emptyset$ and
  $\mathscr F_{I_0}\cap \mathscr F_{I_1} \neq \emptyset$, which implies $I_0=I_1$ and $J_0=J_1$.
  Now, let $I\times J\in \mathscr A$ and assume that there are
  $K_0\times L_0, K_1\times L_1\in \widetilde{\mathscr B}_{I\times J}$ such that
  $K_0\times L_0\cap K_1\times L_1 \neq \emptyset$, i.e. 
  $K_0\cap K_1 \neq \emptyset$ and $L_0\cap L_1 \neq \emptyset$.
  Clearly, there exist $E_0, E_1\in \mathscr E_I$ and $F_0, F_1\in \mathscr F_I$ so that
  $K_i\in \mathscr X_{E_i\times F_i}$ as well as $L_i\in \mathscr Y_{E_i\times F_i}$, $i=0,1$.
  This implies $X_{E_0}\cap X_{E_1} \supset X_{E_0\times F_0} \cap X_{E_1\times F_1} \neq \emptyset$ and
  $Y_{F_0}\cap Y_{F_1}\supset Y_{E_0\times F_0} \cap Y_{E_1\times F_1} \neq \emptyset$.
  Hence, by~\textref[P]{enu:p2} for the sequence of collections $(\mathscr B_{E\times F})$, we obtain
  $E_0\cap E_1 \neq \emptyset$ and $F_0\cap F_1 \neq \emptyset$.
  Since each of the two collections $\mathscr E_I$ and $\mathscr F_J$ consists of pairwise disjoint
  dyadic intervals, we note $E_0=E_1$ and $F_0=F_1$.
  Thus, $K_0\times L_0\cap K_1\times L_1\neq \emptyset$ and
  $K_i\times L_i\in \mathscr B_{E_0\times F_0}$, $i=0,1$, so by~\textref[P]{enu:p1} we have that
  $K_0\times L_0 = K_1\times L_1$.

  Next, we prove that $(\widetilde{\mathscr B}_{I\times J})$ has property \textref[P]{enu:p2}.
  To this end, let $I_k\times J_k\in \mathscr A$, $k=0,1$.
  We now show that
  \begin{equation}\label{proof:lem:product:local-product:4}
    \widetilde X_{I_0}\cap \widetilde X_{I_1} \neq \emptyset
    \quad\text{implies}\quad
    I_0\cap I_1 \neq \emptyset.
  \end{equation}
  Let $\widetilde X_{I_0}\cap \widetilde X_{I_1} \neq \emptyset$.
  By~\eqref{proof:lem:product:local-product:1} and~\eqref{eq:symbols-2} we obtain
  \begin{equation}\label{proof:lem:product:local-product:5}
    \widetilde X_{I_k} = \bigcup_{E\in \mathscr E_{I_k}} X_E,
    \qquad k=0,1,
  \end{equation}
  thus we can find dyadic intervals $E_k\in \mathscr E_{I_k}$, $k=0,1$, such that
  $X_{E_0}\cap X_{E_1}\neq \emptyset$.
  But then, by~\textref[P]{enu:p2} for $X_{E_0}, X_{E_1}$, we have that
  $E_0\cap E_1 \neq \emptyset$, and therefore
  $\mathscr E_{I_0}\cap \mathscr E_{I_1} \neq \emptyset$.
  By~\textref[P]{enu:p2} for $\mathscr E_{I_0}, \mathscr E_{I_1}$ we obtain
  $I_0\cap I_1 \neq \emptyset$, which proves~\eqref{proof:lem:product:local-product:4}.
  Next, we prove
  \begin{equation}\label{proof:lem:product:local-product:6}
    X_{I_0} \subset X_{I_1}
    \quad \text{whenever $I_0\subset I_1$}.
  \end{equation}
  Let $I_0\subset I_1$.
  By~\textref[P]{enu:p2} for $E_{I_k}$, $k=0,1$, we have that $E_{I_0}\subset E_{I_1}$.
  By~\eqref{enu:lem:product:local-product:ii}, we obtain that for all $E_0\in \mathscr E_{I_0}$
  there is an $E_1\in \mathscr E_{I_1}$ such that $E_0 \subset E_1$.
  \textref[P]{enu:p2} for $X_{E_k}$, $k=0,1$, implies $X_{E_0}\subset X_{E_1}$.
  Thus, we obtain from~\eqref{proof:lem:product:local-product:5}
  that~\eqref{proof:lem:product:local-product:6} holds.
  The respective proof for $Y_{J_0}, Y_{J_1}$ is repeating the above proof for
  $X_{I_0}, X_{I_1}$, with $Y$ replacing $X$, $F$ replacing $E$ and $I$ replacing $J$.

  Next, we will prove~\textref[P]{enu:p3}.
  Again, since the proof for $X_I$ and $Y_J$ is completely analogous, we will
  prove~\textref[P]{enu:p3} only for $X_I$.
  Let $I\times J\in \mathscr A$.
  By~\eqref{proof:lem:product:local-product:5}, we have that
  \begin{equation*}
    \widetilde X_I = \bigcup_{E\in \mathscr E_I} X_E.
  \end{equation*}
  By~\eqref{enu:lem:product:local-product:i} for $\mathscr E_I$ and~\textref[P]{enu:p3} for $X_E$,
  $E\in \mathscr E_I$ and the above identity, we obtain
  \begin{equation*}
    C_X^{-1} |E_I|
    \leq C_X^{-1} \sum_{E\in \mathscr E_I} | E |
    \leq | \widetilde X_I | = \sum_{E\in \mathscr E_I} | X_E |
    \leq C_X \sum_{E\in \mathscr E_I} | E |
    = C_X |E_I|.
  \end{equation*}
  By~\textref[P]{enu:p3} we have that $C_X^{-1}|I| \leq |E_I|\leq C_X |I|$, which combined with the
  above estimate shows
  \begin{equation*}
    C_E^{-1}C_X^{-1} |I|
    \leq |\widetilde X_I|
    \leq C_EC_X |I|.
  \end{equation*}
  We note that~\textref[P]{enu:p3} holds with constants $C_EC_X$ and $C_FC_Y^2$, respectively.

  Finally, we will show that~\textref[P]{enu:p4} holds for $\widetilde {\mathscr B}_{I\times J}$.
  For brevity, we will only show the estimates concerning $X_{I_0}$.
  The estimates for $Y_{J_0}$ follow by replacing the proper characters in the proof given below.
  Let $I_0\times J_0, I\times J\in \mathscr A$ with $I_0\times J_0\subset I\times J$, and let
  $K\in \widetilde{\mathscr X}_{I\times J}$.
  By~\eqref{proof:lem:product:local-product:1}, there exist $E\in \mathscr E_I$ and
  $F\in \mathscr F_J$ such that $K\in \mathscr X_{E\times F}$.
  By~\eqref{proof:lem:product:local-product:5}, \eqref{enu:lem:product:local-product:i}
  and~\textref[P]{enu:p2} for $X_{E_0}$, we obtain that
  \begin{equation*}
    \frac{|K\cap \widetilde X_{I_0}|}{|K|}
    = \sum_{E_0\in \mathscr E_{I_0}}\frac{|K\cap X_{E_0}|}{|K|}
    \geq \sum_{\substack{E_0\in \mathscr E_{I_0}\\E_0\subset E}}\frac{|K\cap X_{E_0}|}{|K|}.
  \end{equation*}
  Using~\textref[P]{enu:p4} for $E_0\subset E$, $K\in \mathscr X_{E\times F}$, we obtain
  \begin{equation*}
    \frac{|K\cap \widetilde X_{I_0}|}{|K|}
    \geq C_X^{-1} \sum_{\substack{E_0\in \mathscr E_{I_0}\\E_0\subset E}}\frac{|X_{E_0}|}{|X_E|}
    \geq C_X^{-3} \sum_{\substack{E_0\in \mathscr E_{I_0}\\E_0\subset E}}\frac{|E_0|}{|E|},
  \end{equation*}
  where the latter estimate follows from~\textref[P]{enu:p3} for $X_{E_0}$ and $X_E$.
  Using the hypothesis~\eqref{enu:lem:product:local-product:i} yields
  \begin{equation*}
    \frac{|K\cap \widetilde X_{I_0}|}{|K|}
    \geq C_X^{-3} \frac{|E\cap E_{I_0}|}{|E|}.
  \end{equation*}
  Invoking~\textref[P]{enu:p4} for $\frac{|E\cap E_{I_0}|}{|E|}$ gives
  \begin{equation*}
    \frac{|K\cap \widetilde X_{I_0}|}{|K|}
    \geq C_E^{-1}C_X^{-3} \frac{|I_0|}{|I|}.
  \end{equation*}
  We note that~\textref[P]{enu:p4} holds with constants $C_EC_X^3$ and $C_FC_Y^3$, respectively.
\end{proof}

\section{Local results}\label{sec:local-results}

In this section we show how to almost--diagonalize operators on finite dimensional mixed norm
Hardy spaces and their duals, by building a block basis $(b_R^{(\varepsilon)})$ which satisfies
the local product conditions~\textref[P]{enu:p1}--\textref[P]{enu:p4}.
Moreover, if $T$ has large diagonal with respect to the bi--parameter Haar system, then $T$ has
large diagonal with respect to the block basis $(b_R^{(\varepsilon)})$.
This is achieved in Theorem~\ref{thm:quasi-diag}.

Combining Theorem~\ref{thm:quasi-diag} with Theorem~\ref{thm:projection} yields the local factorization
Theorem~\ref{thm:localized-factorization}, which asserts that the identity operator on a finite dimensional
mixed norm Hardy space (or its dual) factors through operators with large diagonal, in a larger,
finite dimensional mixed norm Hardy space (or its dual).

As a by--product of Theorem~\ref{thm:quasi-diag} and Theorem~\ref{thm:projection}, we obtain that the sequences of
finite dimensional mixed norm Hardy spaces $(H^p_n(H^q_n))_{n\in \mathbb N}$ and
$(H^p_n(H^q_n)^*)_{n\in \mathbb N}$ both have the property that projections almost annihilate finite
dimensional subspaces, see Definition~\ref{dfn:property-pafds} and Theorem~\ref{thm:projections-that-annihilate}.

\subsection{A combinatorial lemma in \bm{$H^p(H^q)$}}\label{subsec:combinatorial}\hfill

\noindent
The following Lemma~\ref{lem:comb-1} will be used as a quantifiable substitute for weak limits in the
proofs of the quantitative local results Theorem~\ref{thm:quasi-diag} and Theorem~\ref{thm:localized-factorization}.
Although the proof of Lemma~\ref{lem:comb-1} is merely a repetition of the proof given
in~\cite{lechner_mueller:2014} for the case $p=q=1$, one still has to check that it does in fact
work for the mixed norm Hardy spaces.
For that reason and for sake of completeness, we give the proof below.

\begin{lem}\label{lem:comb-1}
  Let $1\leq p,q < \infty$ and with the usual convention let $1 < p', q' \leq \infty$ denote the
  indices given by $\frac{1}{p}+\frac{1}{p'} = 1$ and $\frac{1}{q}+\frac{1}{q'} = 1$.
  Let $i \in \mathbb N$, $K_0, L_0 \in \dint$, and for all $0\leq j\leq i-1$ let
  $x_j \in H^p(H^q)^*$ and $y_j \in H^p(H^q)$ be such that
  \begin{equation}\label{eq:function-hypothesis}
    \sum_{j=0}^{i-1} \|x_j\|_{H^p(H^q)^*} \leq |K_0|^{1/p'} |L_0|^{1/q'}
    \quad\text{and}\quad
    \sum_{j=0}^{i-1} \|y_j\|_{H^p(H^q)} \leq |K_0|^{1/p} |L_0|^{1/q}.
  \end{equation}
  The local frequency weight $f_i$ is given by
  \begin{equation}\label{eq:local_frequency_weight}
    f_i(K\times L)
    = \sum_{j=0}^{i-1} |\langle x_j, h_{K\times L} \rangle| + |\langle h_{K\times L}, y_j \rangle|,
    \quad K\times L\in \drec.
  \end{equation}
  Given $\tau > 0$, $r \in \mathbb N_0$, we define the collections of dyadic intervals
  \begin{align*}
    \mathscr K(K_0\times L_0) &= \big\{ K\times L_0 : K \subset K_0,\ |K|\leq 2^{-r}|K_0|,\
      f_i(K\times L_0)
      \leq \tau |K\times L_0|
    \big\},\\
    \mathscr L(K_0\times L_0) &= \big\{ K_0\times L : L \subset L_0,\ |L|\leq 2^{-r}|L_0|,\
      f_i(K_0\times L)
      \leq \tau |K_0\times L|
    \big\}.
  \end{align*}
  For all integers $k,\ell$ the collections $\mathscr K_k(K_0\times L_0)$ and
  $\mathscr L_\ell(K_0\times L_0)$ are given by
  \begin{align*}
    \mathscr K_k(K_0\times L_0)
    &= \mathscr K(K_0\times L_0) \isect (\{K \in \dint : |K| = 2^{-k}|K_0|\}\times \dint),\\
    \mathscr L_\ell(K_0\times L_0)
    & = \mathscr L(K_0\times L_0) \isect (\dint\times \{L \in \dint : |L| = 2^{-\ell}|L_0|\}).
  \end{align*}
  Let $\rho > 0$.
  Then there exist integers $k,\ell$ with
  \begin{equation}\label{lem:comb-1:int}
    r \leq k,\ell \leq \bigg\lfloor \frac{4}{\rho^2\tau^2} \bigg\rfloor + r
  \end{equation}
  such that
  \begin{equation}\label{lem:comb-1:measure}
    | \mathscr K_k^*(K_0\times L_0) | \geq (1-\rho) |K_0\times L_0|
    \quad\text{and}\quad
    | \mathscr L_\ell^*(K_0\times L_0) | \geq (1-\rho) |K_0\times L_0|.
  \end{equation}
\end{lem}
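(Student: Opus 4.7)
\medskip
\noindent
\textbf{Proof plan.}
The argument is a pigeonhole over scales, and I only need to carry it out for $k$ (the case for $\ell$ is formally identical, swapping the roles of the two parameters). Set $N = \lfloor 4/(\rho^2\tau^2)\rfloor + 1$, so the admissible range for $k$ is $[r,r+N-1]$. Call a scale $k$ in this range \emph{bad} if $|\mathscr K_k^*(K_0\times L_0)|<(1-\rho)|K_0\times L_0|$, and write $\mathscr B_k$ for the collection of dyadic intervals $K\subset K_0$ of length $2^{-k}|K_0|$ that fail the inequality in the definition of $\mathscr K(K_0\times L_0)$; at a bad scale its union $B_k^*$ satisfies $|B_k^*|>\rho|K_0|$. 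The plan is to assume, toward a contradiction, that every scale $k\in[r,r+N-1]$ is bad, and then bound the double sum
\begin{equation*}
  \Sigma \;=\; \sum_{k=r}^{r+N-1}\sum_{K\in\mathscr B_k} f_i(K\times L_0)
\end{equation*}
both from below (using badness) and from above (using the norm hypotheses~\eqref{eq:function-hypothesis}).

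The lower bound is immediate: at each bad scale $k$ we have $f_i(K\times L_0)>\tau|K||L_0|$ on $\mathscr B_k$, so $\sum_{K\in\mathscr B_k}f_i(K\times L_0)>\tau\rho|K_0||L_0|$ and hence $\Sigma>N\rho\tau|K_0||L_0|$. For the upper bound I split $f_i$ into its $x_j$- and $y_j$-parts. Using $1$-unconditionality of the bi-parameter Haar basis to insert signs $\sigma_{k,K}=\pm 1$, the $x_j$-contribution becomes
\begin{equation*}
  \sum_{k,K}|\langle x_j,h_{K\times L_0}\rangle|
  \;=\; \Big\langle x_j,\,\sum_{k,K}\sigma_{k,K}h_{K\times L_0}\Big\rangle
  \;\leq\; \|x_j\|_{H^p(H^q)^*}\Big\|\sum_{k,K}\sigma_{k,K}h_{K\times L_0}\Big\|_{H^p(H^q)}.
\end{equation*}
The square function of the auxiliary signed sum is $(\sum_k\charfun_{B_k^*}(x))^{1/2}\charfun_{L_0}(y)\leq\sqrt{N}\charfun_{K_0\times L_0}$, which plugs into~\eqref{eq:HpHq-norm} to give the bound $\sqrt{N}\,|K_0|^{1/p}|L_0|^{1/q}$. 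The $y_j$-contribution is treated dually: project $y_j$ onto $\spn\{h_{K\times L_0}:K\in\mathscr B_k,\,k\in[r,r+N-1]\}$, use $1$-unconditionality to keep the $H^p(H^q)$-norm, expand the square function in $L^p$-form and apply H\"older on $[0,1]$ plus Cauchy--Schwarz in the $k$-index (which contributes the $\sqrt{N}$). This produces $\sum_{k,K}|\langle h_{K\times L_0},y_j\rangle|\leq\sqrt{N}\,\|y_j\|_{H^p(H^q)}|K_0|^{1/p'}|L_0|^{1/q'}$. Summing over $j$ and applying~\eqref{eq:function-hypothesis} collapses the exponents and yields $\Sigma\leq 2\sqrt{N}\,|K_0||L_0|$.

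Comparing the two bounds gives $N\rho\tau<2\sqrt{N}$, i.e.\ $N<4/(\rho^2\tau^2)$, which contradicts the choice of $N$. Hence at least one $k$ in $[r,r+N-1]$ is good, which is~\eqref{lem:comb-1:measure} for $\mathscr K_k^*$; the $\mathscr L_\ell^*$ statement follows by the symmetric argument with the roles of the two factors exchanged. The main subtlety, and the reason the proof is not literally that of~\cite{lechner_mueller:2014}, is the upper bound on $\Sigma$: one has to handle the mixed-norm square function and the dual norm~\eqref{eq:dual-norm} carefully so that the estimates produce exactly the product $|K_0|^{1/p}|L_0|^{1/q}\cdot|K_0|^{1/p'}|L_0|^{1/q'}=|K_0||L_0|$ that absorbs the right-hand side of~\eqref{eq:function-hypothesis}; once this bookkeeping is in place, the pigeonhole step and the choice of $N$ go through verbatim.
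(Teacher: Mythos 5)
Your proof is correct and follows essentially the same route as the paper: assume every scale in $\{r,\dots,r+\lfloor 4/(\rho^2\tau^2)\rfloor\}$ is bad, bound the sum of $f_i$ over the bad rectangles below by $N\rho\tau|K_0||L_0|$ and above by $2\sqrt{N}|K_0||L_0|$ via signed Haar sums and the hypothesis~\eqref{eq:function-hypothesis}, and derive a contradiction with the choice of $N$. The only (cosmetic) difference is that the paper exploits the common factor $L_0$ to split the mixed norm into one--parameter norms, $\|\sum\pm h_{K\times L_0}\| = \|\sum\pm h_K\|_{H^p}\|h_{L_0}\|_{H^q}$, whereas you estimate the bi--parameter square function directly and obtain the dual--side bound by pointwise Cauchy--Schwarz in the scale index plus H\"older, which yields the same constants.
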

Note that the $x$--component of the rectangles in $\mathscr K_k(K_0\times L_0)$ cover a set of
measure $\geq (1-\rho) |K_0|$ in $K_0$, and the $y$--component of the rectangles in
$\mathscr L_\ell(K_0\times L_0)$ cover a set of measure $\geq (1-\rho) |L_0|$ in $L_0$.
To be precise
\begin{equation}\label{lem:comb-1:measure:coordinatewise}
  \begin{aligned}
    \big| \bigcup \{ K : K\times L\in \mathscr K_k(K_0\times L_0) \} \big|
    & \geq (1-\rho) |K_0|,\\
    \big| \bigcup \{ L : K\times L\in \mathscr L_\ell(K_0\times L_0) \} \big|
    & \geq (1-\rho) |L_0|.
  \end{aligned}
\end{equation}
The estimate~\eqref{lem:comb-1:measure:coordinatewise} follows from the fact that all
$K\times L\in \mathscr K_k(K_0\times L_0)$ are such that $L=L_0$, and similarly, the
estimate~\eqref{lem:comb-1:measure:coordinatewise} for the collection
$\mathscr L_\ell(K_0\times L_0)$ follow from the fact that all
$K\times L\in \mathscr L_\ell(K_0\times L_0)$ are such that $K=K_0$.
See~Figure~\ref{fig:combinatorial-2} for a depiction of the collections $\mathscr K_k(K_0\times L_0)$ and
$\mathscr L_\ell(K_0\times L_0)$.
\begin{figure}[bt]
  \begin{center}
    \includegraphics[scale=0.25]{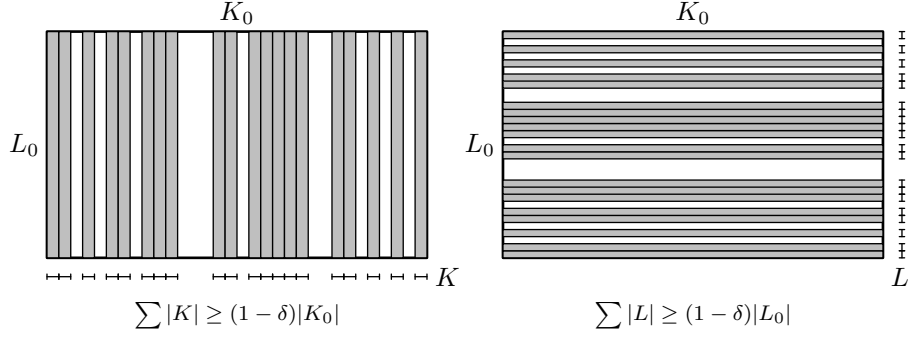}
  \end{center}
  \caption{The gray rectangles in the left picture form the collection
    $\mathscr K_k(K_0\times L_0)$, the gray rectangles to the right form
    $\mathscr L_\ell(K_0\times L_0)$.}
  \label{fig:combinatorial-2}
\end{figure}

\begin{proof}
  Define $\mathscr B = \{K\times L_0 : K\subset K_0\} \setminus \mathscr K(K_0\times L_0)$ and
  \begin{equation*}
    \mathscr B_k = \mathscr B \isect (\{K \in \dint : |K| = 2^{-k}|K_0|\}\times \dint),
  \end{equation*}
  for all $k\in \mathbb N$.
  Now let
  \begin{equation*}
    A = \bigg\lfloor \frac{4}{\rho^2\tau^2} \bigg\rfloor + r.
  \end{equation*}
  Since each $R\in \mathscr B(K_0\times L_0)$ has the form $K\times L_0$ for some
  $K\in \dint$, we have
  \begin{align*}
    \big\| \sum_{k=r}^A \sum_{R\in \mathscr B_k} \pm h_R \big\|_{H^p(H^q)}
    & = \big\| \sum_{k=r}^A \sum_{K : K\times L_0\in \mathscr B_k} \pm h_K \big\|_{H^p}
    \|h_{L_0}\|_{H^q},\\
    \big\| \sum_{k=r}^A \sum_{R\in \mathscr B_k} \pm h_R \big\|_{H^p(H^q)^*}
    & = \big\| \sum_{k=r}^A \sum_{K : K\times L_0\in \mathscr B_k} \pm h_K \big\|_{{H^p}^*}
    \|h_{L_0}\|_{{H^q}^*}.
  \end{align*}
  It is easily verified that $\|h_{L_0}\|_{H^q} = |L_0|^{1/q}$,
  $\|h_{L_0}\|_{{H^q}^*} = |L_0|^{1/q'}$, and that
  \begin{align*}
    \big\| \sum_{k=r}^A \sum_{K : K\times L_0\in \mathscr B_k} \pm h_K \big\|_{H^p}
    & \leq \sqrt{A-r+1}\, |K_0|^{1/p},\\
    \big\| \sum_{k=r}^A \sum_{K : K\times L_0\in \mathscr B_k} \pm h_K \big\|_{{H^p}^*}
    & \leq \sqrt{A-r+1}\, |K_0|^{1/p'}.
  \end{align*}
  Thus, we note the following estimates
  \begin{equation}\label{eq:lem:comb-1:1}
    \begin{aligned}
      \big\| \sum_{k=r}^A \sum_{R\in \mathscr B_k} \pm h_R \big\|_{H^p(H^q)}
      & \leq \sqrt{A-r+1}\, |K_0|^{1/p} |L_0|^{1/q},\\
      \big\| \sum_{k=r}^A \sum_{R\in \mathscr B_k} \pm h_R \big\|_{H^p(H^q)^*}
      & \leq \sqrt{A-r+1}\, |K_0|^{1/p'} |L_0|^{1/q'}.
    \end{aligned}
  \end{equation}

  By construction, $\mathscr B_k$ and $\mathscr K_k(K_0\times L_0)$ form a disjoint decomposition of
  $K_0\times L_0$.
  We will determine a collection $\mathscr K_k(K_0\times L_0)$ by showing that $\mathscr B_k^*$ is small
  enough for at least one value of $k$.
  Now assume the opposite, namely that
  \begin{equation*}
    | \mathscr B_k^* | \geq \rho |K_0\times L_0|,
    \qquad r \leq k \leq A.
  \end{equation*}
  Summing these estimates yields
  \begin{equation}\label{eq:lem:comb-1:2}
    \sum_{k=r}^A | \mathscr B_k^* | \geq (A-r+1)\, \rho\, |K_0\times L_0|,
  \end{equation}
  Observe that by definition of $\mathscr B$ and $\mathscr K$
  \begin{equation*}
    \tau\cdot \sum_{k=r}^A |\mathscr B_k^*|
    \leq \sum_{j=0}^{i-1} \sum_{k=r}^A \sum_{K\times L_0 \in \mathscr B_k}
      |\langle x_j, h_{K\times L_0} \rangle| + |\langle h_{K\times L_0}, y_j\rangle|.
  \end{equation*}
  Rewriting the right hand side in the following way
  \begin{equation*}
    \sum_{j=0}^{i-1} \Big|\Big\langle
      x_j, \sum_{k=r}^A \sum_{K\times L_0 \in \mathscr B_k} \pm h_{K\times L_0}
    \Big\rangle\Big|
    + \Big|\Big\langle
      \sum_{k=r}^A \sum_{K\times L_0 \in \mathscr B_k} \pm h_{K\times L_0}, y_j
    \Big\rangle\Big|,
  \end{equation*}
  and using~\eqref{eq:function-hypothesis} together with~\eqref{eq:lem:comb-1:1} yields
  \begin{equation*}
    \tau\cdot \sum_{k=r}^A |\mathscr B_k^*|
    \leq 2 \sqrt{A-r+1}\, |K_0\times  L_0|.
  \end{equation*}
  Combining the latter estimate with~\eqref{eq:lem:comb-1:2}, we obtain
  \begin{equation*}
    A \leq \frac{4}{\rho^2\tau^2} + r - 1,
  \end{equation*}
  which contradicts the definition of $A$.
  Thus we found $r\leq k\leq A$ so that
  \begin{equation*}
    | \mathscr K_k^*(K_0\times L_0) | \geq (1-\rho) |K_0\times L_0|,
  \end{equation*}
  see Figure~\ref{fig:combinatorial-2}.

  The same proof carried out in the other variable can be used the show the estimate for the
  collections $\mathscr L_\ell(K_0\times L_0)$.
\end{proof}

\subsection{Quantitative almost--diagonalization}\label{subsec:almost-diag}\hfill\\
\noindent
We show that any given operator on a finite dimensional mixed normed Hardy space or its dual can be almost--diagonalized by a block basis $(b_R^{(\varepsilon)})$ of the bi--parameter Haar system.
If moreover, the operator has large diagonal with respect to the bi--parameter Haar system, then it
has large diagonal with respect to $(b_R^{(\varepsilon)})$.
The block basis is such that it satisfies the local product
conditions~\textref[P]{enu:p1}--\textref[P]{enu:p4}.
We provide quantitative estimates on the number of block basis elements, which depends
(among other things) on the dimension of the Hardy space.

\begin{thm}\label{thm:quasi-diag}
  Let $1 \leq p,q < \infty$ and $\delta \geq 0$.
  For $m,n \in \mathbb N$ and $\Gamma, \eta > 0$ there exists an integer
  $N=N(m,n,\Gamma,\eta)$ so that the following holds:
  For any operator $T : H^p_m(H^q_N)\rightarrow H^p_m(H^q_N)$
  or $T : H^p_m(H^q_N)^*\rightarrow H^p_m(H^q_N)^*$
  with $\|T\|\leq \Gamma$ satisfying
  \begin{equation}\label{thm:quasi-diag:large}
    \langle h_R, T h_R \rangle \geq \delta |R|,
    \qquad\text{$R\in \dint^m\times \dint^N$
    },
  \end{equation}
  there exists a finite sequence of collections $(\mathscr B_R : R\in \dint^m\times\dint^n)$ and a
  sequence of signs $(\varepsilon_Q : Q\in \drec)$ defining a block basis of the Haar system
  $b_R^{(\varepsilon)}$ by
  \begin{equation*}
    b_R^{(\varepsilon)} = \sum_{Q \in \mathscr B_R} \varepsilon_Q h_Q,
    \qquad R\in \dint^m\times\dint^n,
  \end{equation*}
  so that the following conditions are satisfied:
  \begin{enumerate}[(i)]
  \item $\mathscr B_R \subset \dint^m\times\dint^N$,
    for all $R\in \dint^m\times\dint^n$.
    \label{enu:thm:quasi-diag-i}

  \item $(\mathscr B_R : R\in \dint^m\times\dint^n)$ satisfies local product
    conditions~\textref[P]{enu:p1}--\textref[P]{enu:p4} with constants
    $C_X = 1$ and $C_Y = 1+\eta$.
    \label{enu:thm:quasi-diag-ii}

  \item $(b_R^{(\varepsilon)} : R\in \dint^m\times\dint^n)$ almost--diagonalizes $T$ so that $T$ has
    large diagonal with respect to $(b_R^{(\varepsilon)} : R\in \dint^m\times\dint^n)$.
    To be more precise, we have the estimates
    \begin{subequations}\label{thm:quasi-diag-iii}
      \begin{align}
        \sum_{\substack{R' \in \dint^m\times\dint^n\\R'\neq R}}
        | \langle b_R^{(\varepsilon)}, T b_{R'}^{(\varepsilon)} \rangle |
        &\leq \eta \|b_R^{(\varepsilon)}\|_2^2,
        & R &\in \dint^m\times\dint^n,
        \label{thm:quasi-diag-iii:a}\\
        \langle b_R^{(\varepsilon)}, T b_R^{(\varepsilon)} \rangle
        &\geq \delta \|b_R^{(\varepsilon)}\|_2^2,
        & R &\in \dint^m\times\dint^n.
        \label{thm:quasi-diag-iii:b}
      \end{align}
    \end{subequations}
    \label{enu:thm:quasi-diag-iii}
  \end{enumerate}
\end{thm}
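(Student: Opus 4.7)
\medskip
\noindent\textbf{Proof plan.}
The plan is to build $(\mathscr B_R)_{R\in\dint^m\times\dint^n}$ by induction on a total order of the index rectangles, feeding Lemma~\ref{lem:comb-1} at each step with a growing list of test functions that captures every interaction we want to suppress. Enumerate $\dint^m\times\dint^n$ as $R_0,R_1,\ldots,R_k$ (say by increasing $y$-scale, and, within each scale, lexicographically). Because $C_X=1$ is required, set $\mathscr X_{I\times J}=\{I\}$ and $X_I=I$ throughout, so that the $x$-halves of \textref[P]{enu:p1}--\textref[P]{enu:p4} are trivially satisfied and the block basis takes the form $b_{R}^{(\varepsilon)}(x,y)=h_I(x)\sum_{L\in\mathscr Y_R}\varepsilon_{I\times L}h_L(y)$ for $R=I\times J$. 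All genuine work happens in the $y$-coordinate, where the very fine scales of $\dint^N$ provide the freedom we need.

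At step $j$, assume $\mathscr Y_{R_i}$ and the signs on $\mathscr B_{R_i}$ have been chosen for $i<j$ consistently with \textref[P]{enu:p1}--\textref[P]{enu:p4} and with~\eqref{thm:quasi-diag-iii:a}, \eqref{thm:quasi-diag-iii:b} restricted to indices $\le i$. Write $R_j=I_j\times J_j$ and let $Y^{(j)}\subset J_j$ be the allowable $y$-region prescribed by the $y$-ancestors and siblings of $J_j$ via \textref[P]{enu:p2} and \textref[P]{enu:p4} (I would reserve these regions in a preliminary top-down sweep so that sibling $Y_{J_0},Y_{J_1}$ stay disjoint independently of~$I$). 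Apply Lemma~\ref{lem:comb-1} to the rectangle $I_j\times Y^{(j)}$ with $x_i=T^*b_{R_i}^{(\varepsilon)}$ and $y_i=Tb_{R_i}^{(\varepsilon)}$ for $i<j$, suitably rescaled so that~\eqref{eq:function-hypothesis} holds; the budget is provided by $\|T\|\le\Gamma$ together with~\eqref{eq:local-product-condition-i} applied to the block basis already built. The lemma delivers a common $y$-scale $\ell$ and a collection $\mathscr L_\ell(I_j\times Y^{(j)})$ covering a $(1-\rho)$-fraction of $Y^{(j)}$ on which the frequency weight is at most $\tau$; take $\mathscr Y_{R_j}$ to consist of the $y$-projections of these rectangles. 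Choosing $\tau,\rho$ to shrink geometrically along the induction forces $\sum_{R'\ne R_j}|\langle b_{R_j}^{(\varepsilon)},Tb_{R'}^{(\varepsilon)}\rangle|\le\eta\|b_{R_j}^{(\varepsilon)}\|_2^2$ and, after compounding, yields \textref[P]{enu:p3} and \textref[P]{enu:p4} with $C_Y\le 1+\eta$.

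To secure~\eqref{thm:quasi-diag-iii:b} it remains to fix the signs $\varepsilon_Q$ on $\mathscr B_{R_j}$. The hypothesis~\eqref{thm:quasi-diag:large} gives the diagonal contribution $\sum_{Q\in\mathscr B_{R_j}}\langle h_Q,Th_Q\rangle\ge\delta\|b_{R_j}^{(\varepsilon)}\|_2^2$ independently of the signs. The remaining internal off-diagonal $\sum_{Q\neq Q'}\varepsilon_Q\varepsilon_{Q'}\langle h_Q,Th_{Q'}\rangle$ is handled by selecting the rectangles inside $\mathscr L_\ell(I_j\times Y^{(j)})$ one at a time: at each sub-step a further application of Lemma~\ref{lem:comb-1}, now with the already-placed $h_Q$'s as test functions, guarantees $|\langle h_Q,Th_{Q'}\rangle|+|\langle h_{Q'},Th_Q\rangle|\le\tau|Q'|$ for every new $Q'$, which absorbs the internal off-diagonal into $\eta$ and leaves~\eqref{thm:quasi-diag-iii:b} intact with the original~$\delta$. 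The integer $N$ is then the total $y$-depth consumed: a tower of order $\lfloor 4/(\rho^2\tau^2)\rfloor$ dyadic levels at each of the $|\dint^m\times\dint^n|$ inductive steps, with $\tau$ and $\rho$ tuned so that every accumulated estimate stays below~$\eta$.

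The main obstacle I anticipate is the bookkeeping: a single step must simultaneously respect the product form $\mathscr B_R=\mathscr X_R\times\mathscr Y_R$, the nested/disjoint architecture demanded by \textref[P]{enu:p1}--\textref[P]{enu:p4}, the vanishing frequency weight against \emph{all} previously constructed $Tb_{R'}^{(\varepsilon)}$ and $T^*b_{R'}^{(\varepsilon)}$, and the internal off-diagonal inside the new block; in addition the proof has to run uniformly in the predual and dual norms~\eqref{eq:dual-norm}, which is why both $x_i$ and $y_i$ appear in the frequency weight of Lemma~\ref{lem:comb-1}. Getting the total order on rectangles right (each $y$-parent settled before its children, and $Y^{(j)}$ never depleted before its turn) is the delicate combinatorial input; once that is arranged, all the analytic estimates follow from repeated invocations of Lemma~\ref{lem:comb-1}.
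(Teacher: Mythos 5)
Your overall architecture --- ordering the rectangles by $y$--scale, freezing the $x$--component ($\mathscr X_{I\times J}=\{I\}$, so $C_X=1$), reserving the admissible $y$--region determined by the previously constructed blocks, and feeding Lemma~\ref{lem:comb-1} with rescaled $T^*b^{(\varepsilon)}_{R_i}$ and $Tb^{(\varepsilon)}_{R_i}$ together with geometrically shrinking $\tau,\rho$ --- is essentially the paper's argument, and it does deliver \eqref{enu:thm:quasi-diag-i}, \eqref{enu:thm:quasi-diag-ii} and \eqref{thm:quasi-diag-iii:a}. One technical remark: Lemma~\ref{lem:comb-1} is stated for a dyadic rectangle $K_0\times L_0$, so it cannot be applied directly to $I_j\times Y^{(j)}$ when $Y^{(j)}$ is a union of intervals; the paper first covers the admissible region by a high--frequency family of congruent dyadic intervals and applies the lemma to each such piece separately. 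This is easily repaired and not the main issue.

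The genuine gap is your treatment of \eqref{thm:quasi-diag-iii:b}. You propose to suppress the internal off--diagonal $\sum_{Q\neq Q'}\varepsilon_Q\varepsilon_{Q'}\langle h_Q, Th_{Q'}\rangle$ inside a single block by a further greedy application of Lemma~\ref{lem:comb-1}, and you claim this ``leaves \eqref{thm:quasi-diag-iii:b} intact with the original $\delta$''. It does not: any smallness argument of this type only yields $\langle b_R^{(\varepsilon)},Tb_R^{(\varepsilon)}\rangle\geq(\delta-\theta)\|b_R^{(\varepsilon)}\|_2^2$ for whatever small $\theta>0$ you engineer, whereas \eqref{thm:quasi-diag-iii:b} carries no $\eta$--slack whatsoever --- it demands the clean constant $\delta$, and the theorem explicitly allows $\delta=0$ (which is exactly how Theorem~\ref{thm:quasi-diag} is invoked in the proof of Theorem~\ref{thm:factorization}); in that case your bound degenerates to nothing. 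Note also that in your scheme the signs $\varepsilon_Q$ do no work, although they appear in the statement for a reason: the paper decomposes $Th_Q=\alpha_Q h_Q+r_Q$ with $\alpha_Q\geq\delta$ and $\langle h_Q,r_Q\rangle=0$, observes that the cross terms $\sum_{Q_0\neq Q_1}\varepsilon_{Q_0}\varepsilon_{Q_1}\langle h_{Q_0},r_{Q_1}\rangle$ have expectation zero over independent random signs, and concludes that some choice of $\varepsilon$ gives
\begin{equation*}
  \langle b^{(\varepsilon)}_R, Tb^{(\varepsilon)}_R\rangle
  \geq \sum_{Q\in\mathscr B_R}\alpha_Q|Q|
  \geq \delta\|b^{(\varepsilon)}_R\|_2^2
\end{equation*}
exactly, with no loss and no additional descent into finer scales. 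Replacing your greedy sub--selection by this sign--averaging argument closes the gap (and simplifies the bookkeeping for $N$, since no extra dyadic levels are consumed inside a block).
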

The proof of Theorem~\ref{thm:quasi-diag} relies on a modification of the construction
in~\cite{lechner_mueller:2014} for the sequence of collections of dyadic rectangles
$(\mathscr B_R)$, the combinatorial Lemma~\ref{lem:comb-1} in $H^p(H^q)$ to make the off--diagonal small,
and selecting signs $(\varepsilon_Q)$ for the block basis
$b_R^{(\varepsilon)} = \sum_{Q\in \mathscr B_R} \varepsilon_Q h_Q$ to keep the diagonal large.
See~\cite{andrew:1979} for the one--parameter setting.

\bfpar{Order relation \pmb{$\drless$} on \pmb{$\dint^m\times\dint^n$}}

The proof of Theorem~\ref{thm:quasi-diag} is by induction over the dyadic rectangles
$\dint^m\times \dint^n$, hence, we need to linearly order them.
To this end, let $\lesslex$ denote the lexicographic order on $\mathbb R^4$.
We define the linear order $\drless$ on $\dint^m\times \dint^n$ by
\begin{equation}\label{eq:ordering}
  I_0\times J_0 \drless I_1\times J_1
  \quad\text{if and only if}\quad
  (|J_1|,|I_1|,\inf I_0, \inf J_0) \lesslex (|J_0|,|I_0|,\inf I_1, \inf J_1),
\end{equation}
where $I_0\times J_0, I_1\times J_1\in \dint^m\times \dint^n$.
By $\drindex$ we denote the index function given by the following conditions:
The function
\begin{equation*}
  \drindex : \dint^m\times \dint^n\to \{k\in \mathbb Z : 0\leq k < (2^{m+1} -1) (2^{n+1}-1)\}
\end{equation*}
is bijective and satisfies
\begin{equation*}
  \drindex(R_0) < \drindex(R_1)
  \qquad\text{if and only if}\qquad
  R_0\drless R_1.
\end{equation*}
See Figure~\ref{fig:rectangle-order}.
\begin{figure}[bt]
  \begin{center}
    \includegraphics[scale=0.25]{images/rectangles.pdf}
  \end{center}
  \caption{Ordering of the 105 rectangles in $\dint^2\times\dint^3$.}
  \label{fig:rectangle-order}
\end{figure}

\begin{proof}[Proof of Theorem~\ref{thm:quasi-diag}]
  We only proof the case where $T : H^p_m(H^q_N)\rightarrow H^p_m(H^q_N)$.
  The proof for the case $T : H^p_m(H^q_N)^*\rightarrow H^p_m(H^q_N)^*$ is the same, but the roles of
  $T$ and $T^*$ are reversed.

  The proof is separated into the following steps:
  \begin{itemize}
  \item preparation,
  \item inductive construction of $b_{i_0}^{(\varepsilon)}$,
    \begin{itemize}
    \item construction of $\mathscr B_{i_0}$,
    \item selecting the signs $\varepsilon$,
    \end{itemize}
  \item verification that $\mathscr B_{i_0}$ satisfies the local product
    conditions~\textref[P]{enu:p1}--\textref[P]{enu:p4},
  \item verification that $b_{i_0}^{(\varepsilon)}$ almost--diagonalizes $T$.
  \end{itemize}

  \bfpar{Preparation}

  Let $1 \leq p,q < \infty$, $\delta \geq 0$, $m,n\in \mathbb N$ and $\Gamma,\eta > 0$.
  The number
  $N = N(m,n,\Gamma,\eta)$ will be determined in the course of the proof.
  Let $T : H^p_m(H^q_N)\to H^p_m(H^q_N)$ be an operator with $\|T\|\leq \Gamma$ such that
  \begin{equation}\label{eq:large-diagonal-0}
    \langle h_Q, T h_Q \rangle \geq \delta |Q|,
    \qquad Q\in \dint^m\times \dint^N.
  \end{equation}

  Given $Q\in \dint^m\times \dint^N$ we write
  \begin{subequations}\label{eq:decomp}
    \begin{equation}
      T h_Q = \alpha_Q h_Q + r_Q,
    \end{equation}
    where
    \begin{equation}
      \alpha_Q = \frac{\langle h_Q, T h_Q \rangle}{|Q|}
      \qquad\text{and}\qquad
      r_Q = \sum_{Q' : Q'\neq Q}
      \frac{\langle h_{Q'}, T h_Q \rangle}{|Q'|} h_{Q'}.
    \end{equation}
  \end{subequations}
  Note that for all $Q=K\times L\in \dint^m\times \dint^N$ we have the estimates
  \begin{equation}\label{eq:a-estimate}
    \delta \leq \alpha_Q \leq \|T\|
    \qquad\text{and}\qquad
    \|r_Q\|_{H^p(H^q)} \leq 2\|T\| |K|^{1/p} |L|^{1/q}.
  \end{equation}

  \bfpar{Inductive construction of \bm{$(b_R^{(\varepsilon)} : R\in \pmb{\dint}^m\times
      \pmb{\dint}^n)$}}

  For fixed $R\in \dint^m\times \dint^n$, the block basis element $b_R^{(\varepsilon)}$ will be
  determined by a collection of dyadic rectangles $\mathscr B_R\subset \dint^m\times \dint^N$ and
  signs $\varepsilon=(\varepsilon_Q : Q\in \dint^m\times \dint^N)$, and is of the following form:
  \begin{equation}\label{eq:block_basis}
    b_R^{(\varepsilon)} = \sum_{Q\in \mathscr B_R} \varepsilon_Q h_Q.
  \end{equation}

  From now on, we sytematically use the following rule:
  whenever $\drindex(R) = i$, we set
  \begin{equation*}
    \mathscr B_i = \mathscr B_R
    \qquad\text{and}\qquad
    b_i^{(\varepsilon)} = b_R^{(\varepsilon)}.
  \end{equation*}
  In the course of this proof we will construct the finite sequence of collections
  $(\mathscr B_R : R\in \dint^m\times \dint^n)$ and signs
  $\varepsilon = (\varepsilon_Q : Q\in \dint^m\times \dint^N)$ so that
  $(\mathscr B_R : R\in \dint^m\times \dint^n)$ satisfies the local product
  conditions~\textref[P]{enu:p1}--\textref[P]{enu:p4} with constants $C_X = 1$ and $C_Y = 1+\eta$,
  and that the block basis $(b_i^{(\varepsilon)})_{i\in \mathbb N_0}$ given
  by~\eqref{eq:block_basis} satisfies
  \begin{subequations}\label{eq:induction-properties}
    \begin{align}
      \sum_{j=0}^{i-1} |\langle T^*b_j^{(\varepsilon)}, b_i^{(\varepsilon)}\rangle|
      + |\langle b_i^{(\varepsilon)}, T b_j^{(\varepsilon)}\rangle|
      & \leq c(\eta') 4^{-i-1} \|b_i^{(\varepsilon)}\|_2^2,
      \label{eq:induction-properties:b}\\
      \langle b_i^{(\varepsilon)}, T b_i^{(\varepsilon)} \rangle
      & \geq \delta \|b_i^{(\varepsilon)}\|_2^2,
      \label{eq:induction-properties:c}
    \end{align}
  \end{subequations}
  for all $i\in \mathbb N_0$, where $c(\eta')\to 0$ as $\eta'\to 0$.
  We now choose $\eta' = \eta'(m,n,\Gamma,\eta) > 0$ so small that
  \begin{equation}\label{eq:eta'}
    (1-\eta')^{-1} \leq 1 + \eta
    \qquad\text{and}\qquad
    \eta' (1-\eta')^{-2} 4^{m+n}\Gamma \leq \eta.
  \end{equation}

  The induction begins by putting
  \begin{equation}\label{eq:induction-init}
    \mathscr B_0 = \{[0,1)\times [0,1)\}
    \qquad\text{and}\qquad
    b_0^{(\varepsilon)} = h_{[0,1)\times [0,1)}.
  \end{equation}
  Let $i_0\in \mathbb N$.
  At this stage we assume:
  \begin{itemize}
  \item There exist collections $\mathscr B_j = \mathscr B_{I\times J}$, for all
    $\drindex(I\times J) = j \leq i_0 -1$ of the the form
    \begin{equation*}
      \mathscr B_{I\times J}
      = \big\{ I\times L : L\in \mathscr Y_{I\times J}\big\},
    \end{equation*}
    where $\mathscr Y_{I\times J}$ is a finite subset of $\dint$.
    In the notation of Section~\ref{sec:clpc}, $\mathscr X_{I\times J} = \{I\}$.

  \item The collections $\mathscr B_j$, $0\leq j\leq i_0-1$, are such that
    $(\mathscr B_j)_{j=0}^{i_0-1}$ satisfies the local product
    conditions~\textref[P]{enu:p1}--\textref[P]{enu:p4}.

  \item The block basis elements $b_j^{(\varepsilon)}$, given by~\eqref{eq:block_basis},
    satisfy~\eqref{eq:induction-properties} for $0\leq j\leq i_0-1$.
  \end{itemize}
  Now, we turn to the construction of $\mathscr B_{i_0}$ and $\varepsilon_Q$,
  where $Q\in \mathscr B_{i_0}$.
  In the first step we find $\mathscr B_{i_0}$, and only then we will determine the signs
  $\varepsilon$.

  \bfpar{Construction of $\pmb{\mathscr B}\bm{_{i_0}}$}

  Let $I_0\times J_0\in \drec$ such that $\drindex(I_0\times J_0) = i_0$.
  At the beginning of the construction as well as at the end, we will distinguish between the two
  cases
  \begin{equation*}
    I_0 \neq [0,1)
    \qquad\text{and}\qquad
    I_0 = [0,1).
  \end{equation*}
  In both cases, we will use the combinatorial~Lemma~\ref{lem:comb-1}.

  If $I_0 \neq [0,1)$ we define the collection $\mathscr P_{I_0\times J_0}$ by
  \begin{subequations}\label{eq:case-1:past-indices}
    \begin{equation}\label{eq:case-1:past-indices:a}
      \mathscr P_{I_0\times J_0}
      = \{I\times J\in \dint^m\times\dint^n :
      I\times J \drless I_0\times J_0, I\neq I_0
      \},
    \end{equation}
    and if $I_0 = [0,1)$, we put
    \begin{equation}\label{eq:case-1:past-indices:b}
      \mathscr P_{[0,1)\times J_0}
      = \{I\times J\in \dint^m\times\dint^n :
      I\times J \drless [0,1)\times J_0, |J| > |J_0|
      \}.
    \end{equation}
  \end{subequations}
  In both cases, we now define $\mathbb A_{I_0\times J_0}$ by
  \begin{equation}\label{eq:intersection-classes}
    \mathbb A_{I_0\times J_0} = \big\{
    \{I\times J'\in \mathscr P_{I_0\times J_0} : |J'| = |J|\} :
    I\times J\in \mathscr P_{I_0\times J_0}
    \big\}.
  \end{equation}
  Before we proceed with the proof, we make a few remarks.
  \begin{itemize}
  \item For all $J\in \dint^n$ with $|J|=|J_0|$ holds that
    $\mathscr P_{I_0\times J} = \mathscr P_{I_0\times J_0}$, and hence
    $\mathbb A_{I_0\times J} = \mathbb A_{I_0\times J_0}$.

  \item If $I\times J\in\mathscr A\in \mathbb A_{I_0\times J_0}$, then
    \begin{equation}\label{eq:case-2:partition:2}
      \mathscr A = \{ I\times J' : J'\in \dint,\ |J'|=|J| \}
    \end{equation}
    see~\eqref{eq:ordering} and~\eqref{eq:intersection-classes}.

  \item The collection $\mathbb A_{I_0\times J_0}$ is a partition of
    $\mathscr P_{I_0\times J_0}$, i.e.
    \begin{equation*}
      \bigcup \mathbb A_{I_0\times J_0}
      = \mathscr P_{I_0\times J_0}
      \qquad\text{and}\qquad
      \mathscr A\cap \mathscr A' = \emptyset
    \end{equation*}
    for all $\mathscr A, \mathscr A'\in \mathbb A_{I_0\times J_0}$ with $\mathscr A\neq \mathscr A'$.

  \item The collections $\mathscr Y_{I\times J}$ have already been constructed for all
    $I\times J\in \mathscr P_{I_0\times J_0}$.

  \item Let $\mathscr A\in \mathbb A_{I_0\times J_0}$ and
    $I\times J, I\times J'\in \mathscr A$, then
    \begin{equation*}
      Y_{I\times J}\cap Y_{I\times J'} = \emptyset,
      \qquad\text{if $J\neq J'$}.
    \end{equation*}
  \end{itemize}

  For each $\mathscr A\in \mathbb A_{I_0\times J_0}$, let $W(\mathscr A)$ denote the set given by
  \begin{equation}\label{eq:case-2:local-intersection-set}
    W(\mathscr A)
    = \bigcup_{I\times J\in \mathscr A} Y_{I\times J}.
  \end{equation}
  Note that for each $\mathscr A\in \mathbb A_{I_0\times J_0}$ the set $W(\mathscr A)$ is an almost
  cover of the unit interval.
  Now we define $W_{I_0\times J_0}$ by intersecting all the $W(\mathscr A)$:
  \begin{equation}\label{eq:case-2:intersection-set}
    W_{I_0\times J_0}
    = \bigcap_{\mathscr A\in \mathbb A_{I_0\times J_0}} W(\mathscr A).
  \end{equation}
  We will cover the set $W_{I_0\times J_0}$ with smaller intervals than we have previously used in our
  construction.
  To this end let
  \begin{equation}\label{eq:case-2:gamma}
    \gamma_{i_0} = \gamma_{I_0\times J_0} = \frac{1}{2}\min\{|L| :
    \exists I\times J \drless I_0\times J_0,\ \mathscr Y_{I\times J}\ni L
    \}
  \end{equation}
  and define the high frequency cover $\mathscr W_{I_0\times J_0}$ of $W_{I_0\times J_0}$ by \begin{equation}\label{eq:case-2:high-frequency-cover:1}
    \mathscr W_{I_0\times J_0}
    = \{L_0\in \dint : |L_0| = \gamma_{I_0\times J_0},\ L_0\subset W_{I_0\times J_0}\}.
  \end{equation}
  Note the following identity:
  \begin{equation}\label{eq:case-2:high-frequency-cover:2}
    \Union \mathscr W_{I_0\times J_0} = W_{I_0\times J_0}.
  \end{equation}
  To each of the rectangles $I_0\times L_0$, where $L_0\in \mathscr W_{I_0\times J_0}$, we
  will now prepare to apply Lemma~\ref{lem:comb-1}, so that $I_0$ will remain intact, and $L_0$ will
  be almost covered with high frequencies $L$.
  To this end let
  \begin{equation}\label{eq:case-2:building_block_size}
    \beta_{i_0}
    = \beta_{I_0\times J_0}
    = \min\{|I_0\times L_0| : L_0\in \mathscr W_{I_0\times J_0}\}
  \end{equation}
  and define for all $0 \leq j \leq i_0-1$
  \begin{align}
    x_j &:= \frac{\beta_{i_0}}{\Gamma i_0 \|b_j^{(\varepsilon)}\|_{H^p(H^q)^*}} T^*b_j^{(\varepsilon)},
    &y_j &:= \frac{\beta_{i_0}}{\Gamma i_0 \|b_j^{(\varepsilon)}\|_{H^p(H^q)}} T b_j^{(\varepsilon)}.
    \label{eq:proof:case-2:functions}
  \end{align}
  Recall that $\|T\| \leq \Gamma$, hence
  \begin{equation*}
    \sum_{j=0}^{i_0-1} \|x_j\|_{H^p(H^q)^*} \leq \beta_{i_0}
    \qquad\text{and}\qquad
    \sum_{j=0}^{i_0-1} \|y_j\|_{_{H^p(H^q)}} \leq \beta_{i_0}.
  \end{equation*}
  The local frequency weight $f_{i_0}$ is given by
  \begin{equation}\label{eq:case-2:local_frequency_weight}
    f_{i_0}(Q)
    = \sum_{j=0}^{i_0-1} |\langle x_j, h_Q \rangle| + |\langle y_j, h_Q\rangle|,
    \qquad Q\in \drec,
  \end{equation}
  and the constant $\tau_{i_0}$ by
  \begin{equation}\label{eq:proof:case-2:tau}
    \tau_{i_0}
    = \tau_{I_0\times J_0}
    = \frac{\eta' \beta_{i_0}}{i_0 4^{i_0+1}}.
  \end{equation}
  For each $L_0\in \mathscr W_{I_0\times J_0}$ we put
  \begin{equation}\label{eq:case-1:local_buidling_blocks}
    \mathscr L(I_0\times L_0)
    = \{I_0\times L :
    L\in \dint,\ L\subsetneq L_0,\ f_{i_0}(I_0\times L) \leq \tau_{i_0} |I_0\times L|
    \}.
  \end{equation}
  Finally, we define the constant $\rho_{i_0}$ by
  \begin{equation}\label{eq:proof:case-1:rho}
    \rho_{i_0}
    = \rho_ {I_0\times J_0}
    = \frac{\eta'}{4^{i_0}}.
  \end{equation}
  Since $\beta_{i_0} \leq |I_0|^{1/{p'}} |L_0|^{1/{q'}}$ and
  $\beta_{i_0} \leq |I_0|^{1/p} |L_0|^{1/q}$,
  for all $L_0\in \mathscr W_{I_0\times J_0}$, Lemma~\ref{lem:comb-1} yields an integer
  $\ell = \ell(I_0\times L_0)$ with
  \begin{equation}\label{eq:proof:case-1:k}
    1 \leq \ell \leq \bigg\lfloor \frac{4}{\rho_{i_0}^2\tau_{i_0}^2} \bigg\rfloor + 1,
  \end{equation}
  such that the collection $\mathscr L_\ell(I_0\times L_0)$ given by
  \begin{equation*}
    \mathscr L_\ell(I_0\times L_0)
    = \{I_0\times L\in \mathscr L(I_0\times L_0) : |L| = 2^{-\ell}|L_0|\}
  \end{equation*}
  satisfies the estimate
  \begin{equation*}
    (1-\rho_{i_0}) |I_0\times L_0|
    \leq \Big| \bigcup\{ Q : Q\in \mathscr L_\ell(I_0\times L_0) \} \Big|
    \leq |I_0\times L_0|.
  \end{equation*}
  Now, we take the union over all $L_0\in \mathscr W_{I_0\times J_0}$ to obtain
  \begin{equation}\label{eq:proof:case-1:Z}
    \mathscr Z_{I_0\times J_0}
    = \Union\big\{ \mathscr L_\ell(I_0\times L_0) :
    L_0\in \mathscr W_{I_0\times J_0}
    \big\}.
  \end{equation}
  Once again, we emphasize that $\ell=\ell(I_0\times L_0)$ in the above formula.
  Let $Z_{I_0\times J_0}$ denote the pointset of $\mathscr Z_{I_0\times J_0}$, i.e.
  \begin{equation*}
    Z_{I_0\times J_0} = \bigcup \mathscr Z_{I_0\times J_0},
  \end{equation*}
  then for all $L_0\in \mathscr W_{I_0\times J_0}$ we have the estimates
  \begin{equation}\label{eq:case-1:local-measure-estimate}
    (1-\rho_{i_0}) |I_0\times L_0|
    \leq \Big| Z_{I_0\times J_0}\cap (I_0\times L_0) \Big|
    \leq |I_0\times L_0|.
  \end{equation}
  We want to point out that $Q\in \mathscr Z_{I_0\times J_0}$ implies $Q=I_0\times L$, for
  some $L\in \dint$.
  There exists a unique $L_0\in \mathscr W_{I_0\times J_0}$ such that $L_0\supset L$, and therefore
  $|L| = 2^{-\ell}$, where $\ell = \ell(I_0\times L_0)$.

  %%%%%%%%%%%%%%%% BEGIN CASE 1 %%%%%%%%%%%%%%%%
  \noindent
  \begin{minipage}[H]{.6\textwidth}
    \bfpar{Case~1: \bm{$I_0 \neq [0,1)$}.}
    Here, we know that $2^{-m} \leq |I_0| \leq 1/2$.
    Let $\widetilde I_0$ be the dyadic predecessor of $I_0$, then
    $\mathscr B_{\widetilde I_0\times J_0}$ has already been defined (see~\eqref{eq:ordering}).
    The block basis indexed by the dark gray rectangles has already been constructed.
    Here, we determine the block basis for the light gray rectangles.
    The white ones will be treated later.
  \end{minipage}
  \begin{minipage}[H]{.4\textwidth}
    \begin{center}
      \includegraphics[scale=0.1]{images/rectangles-case-1.pdf}
    \end{center}
  \end{minipage}
  \noindent
  In this case we put
  \begin{equation}\label{eq:rectangles-case_1}
    \mathscr B_{I_0\times J_0}
    = \{ I_0\times L\in \mathscr Z_{I_0\times J_0} :
    I_0\times L \subset B_{\widetilde I_0\times J_0}
    \}.
  \end{equation}
  see Figure~\ref{fig:building_blocks-1}.
  \begin{figure}[H]
    \begin{center}
      \includegraphics[scale=0.25]{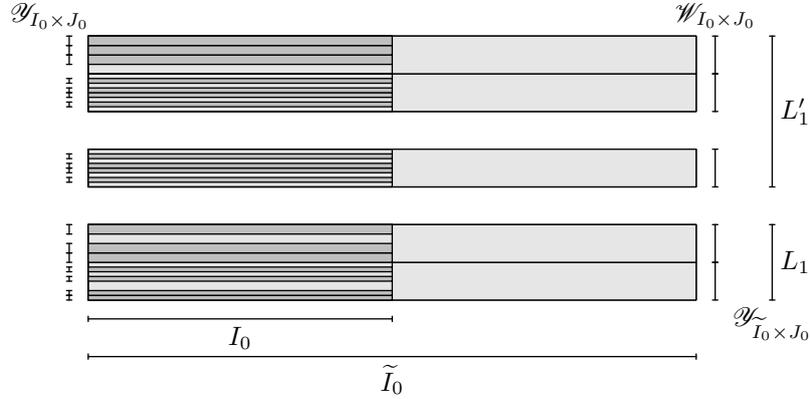}
    \end{center}
    \caption{In this figure, the dyadic interval $I_0$ is the left half of $\widetilde I_0$, and
      the collection $\mathscr Y_{\widetilde I_0\times J_0}$ consists of the dyadic intervals $L_1$ and
      $L_1'$.
      Immediately to the left of $L_1$, $L_1'$ are the unlabeled dyadic intervals of the collection
      $\mathscr W_{I_0\times J_0}$.
      Note that all the intervals in $\mathscr W_{I_0\times J_0}$ have the same length,
      see~\eqref{eq:case-2:high-frequency-cover:1}.
      The pointset $W_{I_0\times J_0}$ of $\mathscr W_{I_0\times J_0}$ almost covers each of the
      intervals $L_1$, $L_1'$, see~\eqref{eq:intersection-classes}, \eqref{eq:case-2:partition:2},
      \eqref{eq:case-2:local-intersection-set}, \eqref{eq:case-2:intersection-set}
      and~\eqref{eq:case-2:high-frequency-cover:2}.
      The light gray rectangles are formed by the product of $\widetilde I_0$ with the intervals in
      $\mathscr W_{I_0\times J_0}$.
      The collection of dyadic intervals $\mathscr Y_{I_0\times J_0}$ is given by the unlabeled
      vertical intervals to the very left of the figure are determined by the combinatorial
      Lemma~\ref{lem:comb-1}.
      The pointset $Y_{I_0\times J_0}$ of $\mathscr Y_{I_0\times J_0}$ almost covers each interval in
      $\mathscr W_{I_0\times J_0}$, and therefore each interval in
      $\mathscr Y_{\widetilde I_0\times J_0} = \{L_1, L_1'\}$,
      see~\eqref{eq:case-1:local_buidling_blocks}, \eqref{eq:proof:case-1:Z}
      \eqref{eq:case-1:local-measure-estimate} and~\eqref{eq:rectangles-case_1}.
      The collection $\mathscr B_{I_0\times J_0}$ consists of the dark gray rectangles and is given
      by $\{I_0\times L : L\in \mathscr Y_{I_0\times J_0}\}$.
    }
    \label{fig:building_blocks-1}
  \end{figure}
  By~\eqref{eq:rectangles-case_1} we obtain that
  \begin{equation}\label{eq:case-1:rectangle-form:final}
    K\times L\in \mathscr B_{I_0\times J_0}
    \quad\text{implies}\quad
    K = I_0,
  \end{equation}
  thus, \eqref{eq:case-1:local-measure-estimate}, \eqref{eq:rectangles-case_1}
  and~\eqref{eq:case-1:rectangle-form:final} yield
  \begin{subequations}\label{eq:case-1:local-measure-estimate:final}
    \begin{equation}\label{eq:case-1:local-measure-estimate:final:a}
      \mathscr X_{I_0\times J_0} = \{I_0\}
    \end{equation}
    and
    \begin{equation}\label{eq:case-1:local-measure-estimate:final:b}
      (1-\rho_{I_0\times J_0}) |L|
      \leq | Y_{I_0\times J_0}\cap L |
      \leq |L|,
    \end{equation}
  \end{subequations}
  for all $L\in \mathscr Y_{I\times J}$ with $L\cap Y_{I_0\times J_0}\neq \emptyset$, where
  $I\times J\in \mathscr P_{I_0\times J_0}$ is maximal with respect to the ordering~$\drless$
  (in which case $I\neq I_0$, $J=J_0$ and we have $L\cap Y_{I_0\times J_0}\neq \emptyset$ for all
  $L\in \mathscr Y_{I\times J}$).
  %%%%%%%%%%%%%%%% END CASE 1 %%%%%%%%%%%%%%%%

  %%%%%%%%%%%%%%%% BEGIN CASE 2 %%%%%%%%%%%%%%%%
  \noindent
  \begin{minipage}[H]{.6\textwidth}
    \bfpar{Case~2: \bm{$I_0 = [0,1)$}}
    In this case we know that $\mathscr B_{I\times \widetilde J_0}$ has already been constructed for
    all $I\in \dint^m$ (see~\eqref{eq:ordering}); those are the dark gray rectangles in the third
    column.
    Here, we determine the block basis for the light gray rectangles.
    The white ones will be treated later.
  \end{minipage}
  \begin{minipage}[H]{.4\textwidth}
    \begin{center}
      \includegraphics[scale=0.1]{images/rectangles-case-2.pdf}
    \end{center}
  \end{minipage}
  \noindent
  Here, we define the sets
  \begin{equation*}
    B_{[0,1)\times \widetilde J_0}^\ell
    = \Union_{[0,1)\times L_0\in \mathscr B_{[0,1)\times \widetilde J_0}} [0,1)\times L_0^\ell
  \end{equation*}
  and
  \begin{equation*}
    B_{[0,1)\times \widetilde J_0}^r
    = \Union_{[0,1)\times L_0\in \mathscr B_{[0,1)\times \widetilde J_0}} [0,1)\times L_0^r.
  \end{equation*}
  \begin{subequations}\label{eq:rectangles-case_2a}
    If $J_0$ is the \emph{left half} of $\widetilde J_0$ we put
    \begin{equation}\label{eq:rectangles-case_2a:left}
      \mathscr B_{[0,1)\times J_0}
      = \{[0,1)\times L\in \mathscr Z_{[0,1)\times J_0} :
      [0,1)\times L\subset B_{[0,1)\times \widetilde J_0}^\ell
      \}.
    \end{equation}
    If $J_0$ is the \emph{right half} of $\widetilde J_0$ we put
    \begin{equation}\label{eq:rectangles-case_2a:right}
      \mathscr B_{[0,1)\times J_0}
      = \{[0,1)\times L\in \mathscr Z_{[0,1)\times J_0} :
      [0,1)\times L\subset B_{[0,1)\times \widetilde J_0}^r
      \},
    \end{equation}
  \end{subequations}
  see Figure~\ref{fig:building_blocks-2}.
  \begin{figure}[H]
    \begin{center}
      \includegraphics[scale=0.25]{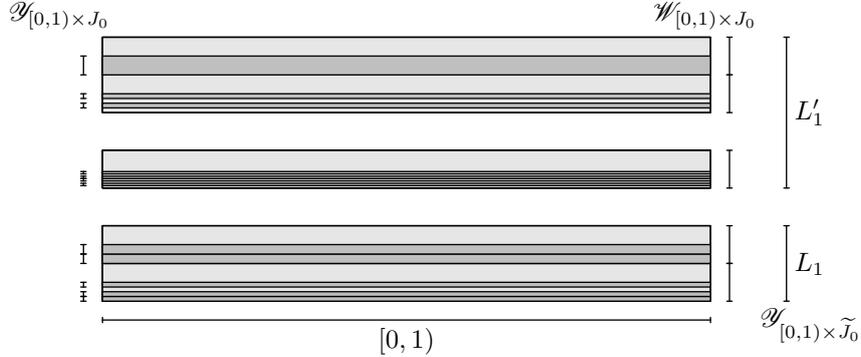}
    \end{center}
    \caption{In this figure, $J_0$ is the left half of $\widetilde J_0$.
      The collection of dyadic intervals $\mathscr Y_{[0,1)\times \widetilde J_0} = \{L_1,L_1'\}$
      are the $y$--components of the collection $\mathscr B_{[0,1)\times \widetilde J_0}$.
      To the left is the collection $\mathscr W_{[0,1)\times J_0}$, whose pointset
      $W_{[0,1)\times J_0}$ almost covers $L_1$ and $L_1'$.
      The light gray dyadic rectangles are determined by the products $[0,1)\times L_0$, where
      $L_0\in \mathscr W_{[0,1)\times J_0}$.
      The dark gray rectangles are the output of the combinatorial Lemma~\ref{lem:comb-1}, and are all
      contained in the left halves in one of the intervals $L_0\in \mathscr W_{[0,1)\times J_0}$
      (since $J_0$ is the left half of $\widetilde J_0$, see~\eqref{eq:rectangles-case_2a}).
      The dark gray rectangles form the collection $\mathscr B_{[0,1)\times J_0}$.
      The collection $\mathscr Y_{[0,1)\times J_0}$ is the collection of $y$--components of
      $\mathscr B_{[0,1)\times J_0}$.
    }
    \label{fig:building_blocks-2}
  \end{figure}
  In this case, we have by~\eqref{eq:rectangles-case_2a} that
  \begin{equation}\label{eq:case-2:rectangle-form:final}
    K\times L\in \mathscr B_{[0,1)\times J_0}
    \quad\text{implies}\quad
    K = [0,1),
  \end{equation}
  thus, \eqref{eq:case-1:local-measure-estimate}, \eqref{eq:rectangles-case_2a}
  and~\eqref{eq:case-2:rectangle-form:final} yield
  \begin{subequations}\label{eq:case-2:local-measure-estimate:final}
    \begin{equation}\label{eq:case-2:local-measure-estimate:final:a}
      \mathscr X_{[0,1)\times J_0} = \{[0,1)\}
    \end{equation}
    and
    \begin{equation}\label{eq:case-2:local-measure-estimate:final:b}
      \frac{1}{2}(1-2\rho_{[0,1)\times J_0}) |L|
      \leq | Y_{[0,1)\times J_0}\cap L |
      \leq |L|,
    \end{equation}
  \end{subequations}
  for all $L\in \mathscr Y_{I\times J}$ with $L\cap Y_{I_0\times J_0}\neq \emptyset$, where
  $I\times J\in \mathscr P_{I_0\times J_0}$ is maximal with respect to the ordering~$\drless$ (in
  which case $I=[1-2^{-m},1)$, $J = \widetilde J_0$ and $L\cap Y_{I_0\times J_0}\neq \emptyset$ for
  all $L\in \mathscr Y_{I\times J}$).

  Recall that  $i_0 = \drindex(I_0\times J_0)$, and that
  $\mathscr B_{i_0} = \mathscr B_{I_0\times J_0}$ as well as
  $b_{i_0}^{(\varepsilon)} = b_{I_0\times J_0}^{(\varepsilon)}$.
  By the definition of $\mathscr L(I_0\times L_0)$ (see~\eqref{eq:case-1:local_buidling_blocks}) and
  the definition of $\mathscr B_{i_0}$
  (see~\eqref{eq:rectangles-case_1} and~\eqref{eq:rectangles-case_2a}), we note that in both cases
  \begin{equation}\label{eq:case-1:almost-diagonal}
    f_{i_0}(Q) \leq \tau_{i_0} |Q|,
    \qquad Q\in \mathscr B_{i_0}.
  \end{equation}

  \bfpar{Selecting the signs $\bm{\varepsilon}$}

  In any of the above cases~\eqref{eq:rectangles-case_1} and~\eqref{eq:rectangles-case_2a}, we
  define the following function.
  For any choice of signs $\varepsilon_Q \in \{-1,+1\}$, $Q\in \mathscr B_{i_0}$ put
  \begin{equation}\label{eq:block-basis-candidate}
    b_{i_0}^{(\varepsilon)} = \sum_{Q\in \mathscr B_{i_0}} \varepsilon_Q h_Q.
  \end{equation}
  We will now select signs $(\varepsilon_Q : Q\in \mathscr B_{i_0})$ such that
  \begin{equation*}
    \langle b_{i_0}^{(\varepsilon)}, T b_{i_0}^{(\varepsilon)} \rangle
    \geq \delta \|b_{i_0}^{(\varepsilon)}\|_2^2.
  \end{equation*}

  To this end observe that~\eqref{eq:decomp} and~\eqref{eq:block-basis-candidate} yields
  \begin{equation*}
    \langle b_{i_0}^{(\varepsilon)}, T b_{i_0}^{(\varepsilon)} \rangle
    = \sum_{Q\in \mathscr B_{i_0}} \alpha_Q |Q|
    + \langle b_{i_0}^{(\varepsilon)}, R_m^{(\varepsilon)} \rangle,
  \end{equation*}
  where
  \begin{equation*}
    R_m^{(\varepsilon)} = \sum_{Q\in \mathscr B_{i_0}} \varepsilon_Q r_Q.
  \end{equation*}
  Thus, by~\eqref{eq:a-estimate} we get
  \begin{equation*}
    \langle b_{i_0}^{(\varepsilon)}, T b_{i_0}^{(\varepsilon)} \rangle
    \geq \delta \|b_{i_0}^{(\varepsilon)}\|_2^2
    + \langle b_{i_0}^{(\varepsilon)}, R_m^{(\varepsilon)} \rangle,
  \end{equation*}
  Let $\cond_\varepsilon$ denote the average over all possible choices of signs
  $(\varepsilon_Q : Q\in \mathscr B_{i_0})$.
  Using $\langle h_Q, r_Q \rangle = 0$, we obtain
  \begin{equation*}
    \langle b_{i_0}^{(\varepsilon)}, R_m^{(\varepsilon)} \rangle
    = \sum \varepsilon_{Q_0} \varepsilon_{Q_1}
    \langle h_{Q_0}, r_{Q_1} \rangle,
  \end{equation*}
  where the sum is taken over all $Q_0, Q_1\in \mathscr B_{i_0}$ with
  $Q_0\neq Q_1$.
  Taking the expectation on the right hand side we obtain,
  \begin{equation*}
    \cond_\varepsilon \langle b_{i_0}^{(\varepsilon)}, R_m^{(\varepsilon)} \rangle = 0.
  \end{equation*}
  This gives
  \begin{equation}\label{eq:diagonal_estimate-0}
    \cond_\varepsilon \langle b_{i_0}^{(\varepsilon)}, T b_{i_0}^{(\varepsilon)} \rangle
    \geq \delta \cond_\varepsilon \|b_{i_0}^{(\varepsilon)}\|_2^2,
  \end{equation}
  hence there exists at least one $\varepsilon$ such that
  \begin{equation}\label{eq:diagonal_estimate-1}
    \langle b_{i_0}^{(\varepsilon)}, T b_{i_0}^{(\varepsilon)} \rangle
    \geq \delta \|b_{i_0}^{(\varepsilon)}\|_2^2.
  \end{equation}

  \bfpar{$(\pmb{\mathscr B}\bm{_i : i\leq i_0})$ satisfies the local product conditions
    \textref[P]{enu:p1}--\textref[P]{enu:p4}.}

  It should be clear from the definition of $\mathscr F_m$ in each step, that
  $(\mathscr B_i : i\leq i_0)$ satisfies~\textref[P]{enu:p1}.
  Since $\mathscr X_{I\times J} = \{I\}$ for all $I\times J\in \dint^m\times \dint^n$,
  \textref[P]{enu:p2}--\textref[P]{enu:p4} is satisfied with $C_X = 1$.
  Recalling the definition of $Y_J$ (see~\eqref{eq:symbols-2}), and that the new
  $y$--components are obtained by intersecting all the supports from the previous steps
  (see~\eqref{eq:case-2:local-intersection-set}, \eqref{eq:case-2:intersection-set},
  \eqref{eq:case-2:high-frequency-cover:1}, \eqref{eq:case-2:high-frequency-cover:2},
  \eqref{eq:case-1:local_buidling_blocks}, \eqref{eq:proof:case-1:Z}, \eqref{eq:rectangles-case_1},
  and~\eqref{eq:rectangles-case_2a}) we observe that
  \begin{equation*}
    Y_J = \bigcup_{I\in \dint^m} Y_{I\times J} = Y_{[0,1)\times J},
    \qquad J\in \dint^n.
  \end{equation*}
  By considering~\eqref{eq:rectangles-case_2a} together with the above identity, it should be clear
  that $(Y_J : J\in \dint^n)$ satisfies~\textref[P]{enu:p2} and $|Y_J| \leq |J|$, $J\in \dint^n$.
  The remaining measure estimates~\textref[P]{enu:p3} and~\textref[P]{enu:p4} follow by induction
  from~\eqref{eq:case-1:local-measure-estimate:final}
  and~\eqref{eq:case-2:local-measure-estimate:final}.

  Now, let $I_0\times J_0, I\times J\in \dint^m\times \dint^n$ with
  $I_0\times J_0 \subsetneq I\times J$ and let $L\in \mathscr Y_{I\times J}$.
  From~\eqref{eq:case-1:local-measure-estimate:final}
  and~\eqref{eq:case-2:local-measure-estimate:final} follows immediately that $K_0=I_0$ and $K=I$.
  Since the $\rho_i$ are a geometric sequence (see~\eqref{eq:proof:case-1:rho}), we obtain by
  induction that
  \begin{equation}\label{eq:local-measure-estimate:Y}
    \frac{|J_0|}{|J|} ( 1 - \eta' ) |L|
    \leq |L\cap Y_{I_0\times J_0}|
    \leq \frac{|J_0|}{|J|} |L|.
  \end{equation}
  We remark that~\eqref{eq:local-measure-estimate:Y} implies \textref[P]{enu:p3}
  and~\textref[P]{enu:p4} with $C_Y = C_Y(\eta') = ( 1 - \eta')^{-1}$.
  To summarize, we showed that $(\mathscr B_R : R\in \dint^m \times \dint^n)$ satisfies the local
  product conditions~\textref[P]{enu:p1}--\textref[P]{enu:p4} with constants $C_X = 1$ and
  $C_Y = (1-\eta')^{-1}$.

  \bfpar{The block basis $\bm{(b_i^{(\varepsilon)} : i\leq i_0)}$ almost--diagonalizes $\bm{T}$.}

  First, recall that the constant $\tau_i$ was given by $\tau_i = \frac{\eta' \beta_i}{i 4^{i+1}}$
  (see~\eqref{eq:case-2:building_block_size} and~\eqref{eq:proof:case-2:tau}).
  With that in mind, we collect the estimates~\eqref{eq:case-1:almost-diagonal},
  (see~\eqref{eq:case-2:local_frequency_weight} for the definition of $f_{i_0}$)
  and the mixed norm estimates in Theorem~\ref{thm:projection} to obtain
  \begin{equation*}
    \sum_{j = 0}^{i-1} | \langle T^* b_j^{(\varepsilon)}, h_Q \rangle |
    + | \langle h_Q, T b_j^{(\varepsilon)} \rangle |
    \leq  \eta' (1-\eta')^{-1} 2^{m+n}\Gamma 4^{-i-1} |Q|,
  \end{equation*}
  for all $i$ and $Q\in \mathscr B_i$.
  From the latter estimate and the definition of $b_i^{(\varepsilon)}$
  (see~\eqref{eq:block-basis-candidate}) we obtain by summing over $Q\in \mathscr B_i$
  \begin{equation}\label{eq:almost-eigenvectors:0}
    \sum_{j = 0}^{i-1} | \langle T^* b_j^{(\varepsilon)}, b_i^{(\varepsilon)} \rangle |
    + | \langle b_i^{(\varepsilon)}, T b_j^{(\varepsilon)} \rangle |
    \leq  \eta' (1-\eta')^{-1} 2^{m+n}\Gamma 4^{-i-1},
    \qquad i \geq 1.
  \end{equation}
  From the first term in the sum of~\eqref{eq:almost-eigenvectors:0} follows the estimate
  \begin{equation*}
    |\langle b_i^{(\varepsilon)}, T b_j^{(\varepsilon)} \rangle|
    \leq  \eta' (1-\eta')^{-1} 2^{m+n}\Gamma 4^{-j-1},
    \qquad j > i \geq 0.
  \end{equation*}
  Summing over all those $j$ we obtain
  \begin{equation*}
    \sum_{j\geq i+1}
    |\langle b_i^{(\varepsilon)}, T b_j^{(\varepsilon)} \rangle|
    \leq  \eta' (1-\eta')^{-1} 2^{m+n}\Gamma 4^{-i-1},
    \qquad i \geq 0.
  \end{equation*}
  Combining the latter estimate with~\eqref{eq:almost-eigenvectors:0} and using that
  $\|b_i\|_2^2\geq (1-\eta')2^{-m-n}$
  (see~\eqref{eq:local-product-condition-i} and recall that $b_i = b_R$, whenever $i=\drindex(R)$,
  and that here $R\in \dint^m \times \dint^n$) yields
  \begin{equation}\label{eq:almost-eigenvectors:1}
    \sum_{\substack{j : j \neq i}} | \langle b_i^{(\varepsilon)}, Tb_j^{(\varepsilon)} \rangle |
    \leq \eta' (1-\eta')^{-2} 4^{m+n}\Gamma \|b_i^{(\varepsilon)}\|_2^2,
    \qquad i \geq 0.
  \end{equation}
  We remark that~\eqref{eq:almost-eigenvectors:1} and~\eqref{eq:eta'} together
  with~\eqref{eq:diagonal_estimate-1} proves~\eqref{thm:quasi-diag-iii}.

  Finally, observe that~\eqref{enu:thm:quasi-diag-i} of Theorem~\ref{thm:quasi-diag} holds true by
  observing that all the constants in the proof depend only on $m,n,\Gamma$ and $\eta$
  (see~\eqref{eq:eta'}, \eqref{eq:case-2:gamma},
  \eqref{eq:case-2:building_block_size}, \eqref{eq:proof:case-2:tau},
  \eqref{eq:case-1:local_buidling_blocks}, \eqref{eq:proof:case-1:rho}
  and~\eqref{eq:proof:case-1:k}).
\end{proof}

\begin{rem}\label{rem:thm:quasi-diag}
  Let $(e_K)$ and $(f_L)$ denote block bases of the one parameter Haar system satisfying the
  hypothesis of the reiteration Lemma~\ref{lem:product:local-product}, that is $(e_K\otimes f_L)$
  satisfies the local product conditions~\textref[P]{enu:p1}--\textref[P]{enu:p4}, and additional
  regularity assumptions (see
  Lemma~\ref{lem:product:local-product}~\eqref{enu:lem:product:local-product:i}
  and~\eqref{enu:lem:product:local-product:ii}).

  We remark that we could repeat the proof of Theorem~\ref{thm:quasi-diag} with $h_{K\times L}$ replaced
  by $\widetilde h_{K\times L} = e_K\otimes f_L$, $K\times L\in \dint^m\times \dint^N$.
  Due to the reiteration Lemma~\ref{lem:product:local-product}, we would arrive at the same conclusion.
  To be more precise: if we replace~\eqref{thm:quasi-diag:large} in Theorem~\ref{thm:quasi-diag} by
  \begin{equation}\label{thm:quasi-diag:large*}
    \langle \widetilde h_Q, T \widetilde h_Q \rangle
    \geq \delta \|\widetilde h_Q\|_2^2,
    \qquad Q\in \dint^m\times \dint^N,
  \end{equation}
  then all the conclusions \eqref{enu:thm:quasi-diag-i}--\eqref{enu:thm:quasi-diag-iii} of
  Theorem~\ref{thm:quasi-diag} remain valid for the block basis
  $(\widetilde b_R^{(\varepsilon)} : R\in \dint^m\times\dint^n)$ of the bi--parameter Haar system
  given by
  \begin{equation*}
    \widetilde b_R^{(\varepsilon)}
    = \sum_{Q \in \mathscr B_R} \varepsilon_Q \widetilde h_Q,
    \qquad R\in \dint^m\times\dint^n.
  \end{equation*}
\end{rem}

\subsection{Projections that almost annihilate finite dimensional subspaces}\label{subsec:projections-that-annihilate}\hfill

\noindent
In the proof of the main result Theorem~\ref{thm:factorization}, we will use the almost--diagonalization result Theorem~\ref{thm:quasi-diag}.
Additionally, we will need the following variation of Theorem~\ref{thm:quasi-diag}.
\begin{thm}\label{thm:projections-that-annihilate}
  Let $1\leq p,q < \infty$, $m,n,d \in \mathbb N$ and $\eta > 0$.
  Then there exists an integer
  $N=N(m,n,d,\eta)$ so that for any $d$--dimensional subspace $F \subset H_m^p(H_N^q)$
  (respectively $F \subset H_m^p(H_N^q)^*$)
  there exists a block basis $(b_R : R\in \dint^m\times\dint^n)$ satisfying the following
  conditions:
  \begin{enumerate}[(i)]
  \item $\mathscr B_R \subset \dint^m\times\dint^N$,
    for all $R \in \dint^m\times\dint^n$.
    \label{enu:thm:projections-that-annihilate-i}

  \item For every finite sequence of scalars $(a_R : R \in \dint^m\times\dint^n)$ we have that
    \begin{equation}\label{eq:thm:projections-that-annihilate:ii}
      (1+\eta)^{-1} \Big\| \sum_{R\in \dint^m\times\dint^n} a_R h_R \Big\|
      \leq \Big\| \sum_{R\in \dint^m\times\dint^n} a_R b_R \Big\|
      \leq (1+\eta) \Big\| \sum_{R\in \dint^m\times\dint^n} a_R h_R \Big\|.
    \end{equation}
    The above norms are either all the norm of $H^p(H^q)$, or they are all the norm of
    $H^p(H^q)^*$.
    \label{enu:thm:projections-that-annihilate-ii}

  \item The orthogonal projection $Q : H_m^p(H_N^q) \rightarrow H_m^p(H_N^q)$
    (respectively $Q : H_m^p(H_N^q)^* \rightarrow H_m^p(H_N^q)^*$)
    given by
    \begin{equation*}
      Q f = \sum_{R\in \dint^m\times\dint^n}
      \frac{\langle f, b_R \rangle}{\|b_R\|_2^2}\,
      b_R
    \end{equation*}
    satisfies the estimates
    \begin{equation}\label{eq:thm:projections-that-annihilate:iii}
      \begin{aligned}
        \|Q f\| &\leq (1+\eta) \|f\|,
        &f &\in H_m^p(H_N^q)\ \text{(respectively $f \in H_m^p(H_N^q)^*$)},\\
        \| Q f \| &\leq \eta\, \|f\|,  &f &\in F.
      \end{aligned}
    \end{equation}
    The above norms are either all the norm of $H^p(H^q)$, or they are all the norm of
    $H^p(H^q)^*$.
    \label{enu:thm:projections-that-annihilate-iii}
  \end{enumerate}
\end{thm}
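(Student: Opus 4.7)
The plan is to follow the blueprint of Theorem~\ref{thm:quasi-diag}, but with the role of the operator $T$ played by a finite $\eta_1$-net of the unit sphere of the given subspace $F$. Since $F$ is $d$-dimensional, for any $\eta_1>0$ there is an $\eta_1$-net $\{f_1,\ldots,f_M\}$ in its unit sphere (in the appropriate norm, $H^p(H^q)$ or its dual) with $M\le (3/\eta_1)^d$. It suffices to produce a block basis $(b_R)$ satisfying the local product conditions~\textref[P]{enu:p1}--\textref[P]{enu:p4} with $C_X=1$ and $C_Y\le 1+\eta$ whose associated orthogonal projection $Q$ obeys $\|Q\|\le 1+\eta$ and $\|Qf_j\|\le \eta/2$ for each $j$; then the triangle inequality $\|Qf\|\le(1+\eta)\eta_1+\eta/2$ combined with a small enough $\eta_1$ yields the desired bound on all of $F$.

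For the construction of $(\mathscr{B}_R:R\in\dint^m\times\dint^n)$ I would reuse the inductive scheme from the proof of Theorem~\ref{thm:quasi-diag} verbatim (the two cases $I_0\ne [0,1)$ and $I_0=[0,1)$, the intersection sets $W_{I_0\times J_0}$, the high-frequency cover $\mathscr{W}_{I_0\times J_0}$, the refinement by the combinatorial Lemma~\ref{lem:comb-1}, and the final selection from one half in Case~$2$), choosing signs $\varepsilon_Q\equiv 1$ throughout since there is no operator-diagonal to preserve. The only new ingredient is the local frequency weight fed into Lemma~\ref{lem:comb-1}, which now takes the form
\begin{equation*}
  f_{i_0}(Q) = \sum_{j=1}^M|\langle f_j,h_Q\rangle|,
\end{equation*}
suitably rescaled so that the hypothesis~\eqref{eq:function-hypothesis} is met (with the $f_j$'s playing the role of the $y_j$'s when $F\subset H^p_m(H^q_N)$, and of the $x_j$'s when $F\subset H^p_m(H^q_N)^*$). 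Because this follows the Theorem~\ref{thm:quasi-diag} construction line-by-line, the local product conditions hold with $C_X=1$ and $C_Y\le(1-\eta')^{-1}$ for any preassigned $\eta'>0$, and Lemma~\ref{lem:comb-1} guarantees $|\langle f_j,h_Q\rangle|\le \tau |Q|$ for all $Q\in \bigcup_R\mathscr{B}_R$, where the threshold $\tau>0$ can be made arbitrarily small at the cost of enlarging $N=N(m,n,d,\eta)$. Theorem~\ref{thm:projection} then delivers~\eqref{eq:thm:projections-that-annihilate:ii} and the first estimate in~\eqref{eq:thm:projections-that-annihilate:iii}.

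It remains to bound $\|Qf_j\|$. Setting $b_R=\sum_{Q\in\mathscr{B}_R}h_Q$, the pointwise bound on $f_{i_0}$ immediately gives $|\langle f_j,b_R\rangle|\le \tau\|b_R\|_2^2$. The families $(\mathscr{B}_R)$ are pairwise disjoint by~\textref[P]{enu:p1}, so the $b_R$'s are mutually orthogonal in $L^2$ and $\sum_R\|b_R\|_2^2\le 1$; hence the $L^2$-identity $\|Qf_j\|_2^2=\sum_R|\langle f_j,b_R\rangle|^2/\|b_R\|_2^2$ yields $\|Qf_j\|_2\le \tau$. The main, and essentially the only, nontrivial point is converting this $L^2$-bound into an $H^p(H^q)$-bound: since $Qf_j$ lies in the finite-dimensional space $\spn\{b_R:R\in\dint^m\times\dint^n\}$, the equivalence of norms there costs a constant $C=C(m,n,p,q)$ that can be absorbed by shrinking $\tau$, equivalently by enlarging $N$. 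Taking $\tau\le\eta/(2C)$ and $\eta_1\le\eta/(2(1+\eta))$ completes the proof; the dual-space case $F\subset H^p_m(H^q_N)^*$ is obtained by interchanging the roles of the predual and dual pairings throughout.
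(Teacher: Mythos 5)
Your proposal takes essentially the same route as the paper: its proof of this theorem is exactly a rerun of the Theorem~\ref{thm:quasi-diag} construction with all signs $\varepsilon_Q=1$ and with the combinatorial Lemma~\ref{lem:comb-1} applied to the frequency weight built from a finite net of the unit ball of $F$, after which Theorem~\ref{thm:projection} yields the equivalence and boundedness statements, just as you argue. Two harmless slips to tidy up: $\sum_R\|b_R\|_2^2$ is of order $(m+1)(n+1)$ rather than $\le 1$, and in the final norm comparison you should bound $\|Qf_j\|$ via $|a_R|\le\tau$, the number of rectangles in $\dint^m\times\dint^n$, and $\|h_R\|\le 1$, so the constant depends only on $m,n$ and $N$ remains independent of $p$ and $q$ as the statement requires.
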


\begin{proof}
  The proof of Theorem~\ref{thm:projections-that-annihilate} is a repetition of the
  almost-diagonalization argument in the proof of Theorem~\ref{thm:quasi-diag}, where the
  combinatorial Lemma~\ref{lem:comb-1} is used with the following frequency weight $f$ in each
  step.
  Given a finite $\eta/2$-net $(y_j)_{j=1}^k$ of the unit ball in $F$ define the local frequency
  weight $f$ by
  \begin{equation*}
    f(R) = \sum_{j=1}^k |\langle h_R, y_j\rangle|,
    \qquad R\in \drec.
  \end{equation*}
  Since we do not need a large diagonal in this particular instance, we choose all signs
  $\varepsilon_Q=1$.
  The bi-parameter case is analogous to the one parameter case, which is described in detail
  in~\cite[290--291]{mueller:2005}.
\end{proof}

\begin{rem}\label{rem:thm:projections-that-annihilate}
  In view of Remark~\ref{rem:thm:projection} and Remark~\ref{rem:thm:quasi-diag}, it is clear that we could have
  replaced the bi--parameter Haar system $(h_K\otimes h_L)$ by the tensor product
  $(e_K\otimes f_L)$, where $(e_K)$ and $(f_L)$ denote block bases of the one parameter Haar system,
  such that $(e_K\otimes f_L)$ satisfies~\textref[P]{enu:p1}--\textref[P]{enu:p4} as well as some
  additional regularity assumptions (see
  Lemma~\ref{lem:product:local-product}~\eqref{enu:lem:product:local-product:i}
  and~\eqref{enu:lem:product:local-product:ii}).
  Hence, the conclusions
  \eqref{enu:thm:projections-that-annihilate-i}--\eqref{enu:thm:projections-that-annihilate-iii}
  of Theorem~\ref{thm:projections-that-annihilate} are true for the block basis
  $(\widetilde b_R^{(\varepsilon)} : R\in \dint^m\times\dint^n)$ given by
  \begin{equation*}
    \widetilde b_R^{(\varepsilon)}
    = \sum_{K\times L \in \mathscr B_R} \varepsilon_{K\times L} e_K\otimes f_L,
    \qquad R\in \dint^m\times\dint^n.
  \end{equation*}
\end{rem}

\subsection{Local factorization}\label{subsec:local-factorization}

\noindent
Here, we state our local factorization result Theorem~\ref{thm:localized-factorization}, which follows by a
standard argument from the projection Theorem~\ref{thm:projection} and the almost--diagonalization result
Theorem~\ref{thm:quasi-diag}.
For sake of completeness and since we need to keep track of our constants, we repeat the proof
pattern in~\cite{lechner_mueller:2014}.
For the one--parameter analogue of this proof, we refer to~\cite[Chapter 5.2]{mueller:2005}.
\begin{thm}\label{thm:localized-factorization}
  Let $1 \leq p,q < \infty$ and $\delta > 0$.
  For $m,n \in \mathbb N$ and $\Gamma,\eta > 0$ there exists an integer
  $N=N(\delta,m,n,\Gamma,\eta)$ so that the following holds:
  For any operator $T : H^p_m(H^q_N)\rightarrow H^p_m(H^q_N)$
  (respectively $T : H^p_m(H^q_N)^*\rightarrow H^p_m(H^q_N)^*$)
  with $\|T\|\leq \Gamma$ satisfying
  \begin{equation*}
    |\langle h_R, T h_R \rangle| \geq \delta |R|,
    \qquad\text{$R\in \dint^m\times \dint^N$,
    }
  \end{equation*}
  the identity $\Id$ on $H^p_m(H^q_n)$ (respectively $H^p_m(H^q_n)^*$) well factors through $T$.
  To be more precise, there exist bounded linear operators
  $E : H^p_m(H^q_n)\to H^p_m(H^q_N)$ and $P : H^p_m(H^q_N)\to H^p_m(H^q_n)$
  (respectively $E : H^p_m(H^q_n)^*\to H^p_m(H^q_N)^*$ and $P : H^p_m(H^q_N)^*\to H^p_m(H^q_n)^*$)
  such that the diagram
  \begin{equation*}
    \vcxymatrix{H^p_m(H^q_n) \ar[r]^{\Id} \ar[d]_E & H^p_m(H^q_n)\\
      H^p_m(H^q_N) \ar[r]_T & H^p_m(H^q_N) \ar[u]_P}
    \qquad\text{respectively}\qquad
    \vcxymatrix{H^p_m(H^q_n)^* \ar[r]^{\Id} \ar[d]_E & H^p_m(H^q_n)^*\\
      H^p_m(H^q_N)^* \ar[r]_T & H^p_m(H^q_N)^* \ar[u]_P}
  \end{equation*}
  is commutative, and the operators $E$ and $P$ can be chosen so that
  $\|E\|\|P\| \leq (1+\eta)/\delta$.
\end{thm}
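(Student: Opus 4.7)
The plan is to use Theorem~\ref{thm:quasi-diag} to bring $T$ to an almost-diagonal form with respect to a suitable block basis of the bi-parameter Haar system, and then to invert the resulting near-diagonal operator in order to factor the identity on $H^p_m(H^q_n)$ through $T$.

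I would begin by choosing an auxiliary parameter $\eta'>0$, small in terms of $\eta,\delta,m,n,\Gamma$, with the exact smallness to be fixed at the end of the argument. After a harmless preliminary normalization that turns the absolute-value hypothesis $|\langle h_R,Th_R\rangle|\geq\delta|R|$ into the signed diagonal condition required by Theorem~\ref{thm:quasi-diag} (composing $T$ on the left with the bounded sign operator $M_\sigma h_R=\operatorname{sgn}(\langle h_R,Th_R\rangle)\,h_R$, at the price of a fixed unconditionality factor), I would invoke Theorem~\ref{thm:quasi-diag} with parameters $m,n,\Gamma$ and $\eta'$ to obtain an integer $N=N(\delta,m,n,\Gamma,\eta)$, a family $(\mathscr B_R)_{R\in\dint^m\times\dint^n}$ satisfying the local product conditions with constants $C_X=1$, $C_Y=1+\eta'$, and a block basis $(b_R^{(\varepsilon)})$ for which, writing $\lambda_R:=\langle b_R^{(\varepsilon)},Tb_R^{(\varepsilon)}\rangle/\|b_R^{(\varepsilon)}\|_2^2$, one has $|\lambda_R|\geq\delta$ together with the off-diagonal bound~\eqref{thm:quasi-diag-iii:a}.

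Next I would define $E:H^p_m(H^q_n)\to H^p_m(H^q_N)$ by $Eh_R=b_R^{(\varepsilon)}$ and a candidate projection $P_0:=M\circ Q^{(\varepsilon)}$, where $Q^{(\varepsilon)}$ is the orthogonal projection supplied by Theorem~\ref{thm:projection}~(ii), and $M$ is defined on the range of $Q^{(\varepsilon)}$ by $Mb_R^{(\varepsilon)}=\lambda_R^{-1}h_R$. By Theorem~\ref{thm:projection}~(i), $\|E\|$ and $\|E^{-1}\|$ on the image of $E$ are bounded by $1+O(\eta')$; the bound $|\lambda_R|\geq\delta$ together with the unconditionality of the Haar basis yields $\|M\|\leq(1+O(\eta'))/\delta$; and Theorem~\ref{thm:projection}~(ii) gives $\|Q^{(\varepsilon)}\|\leq 1+O(\eta')$. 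Expanding $Q^{(\varepsilon)}(Tb_{R_0}^{(\varepsilon)})$ and using the definition of $\lambda_R$, one obtains
\begin{equation*}
P_0TEh_{R_0}=h_{R_0}+\sum_{R\neq R_0}\frac{\langle b_R^{(\varepsilon)},Tb_{R_0}^{(\varepsilon)}\rangle}{\lambda_R\|b_R^{(\varepsilon)}\|_2^2}\,h_R=:(\Id+U)h_{R_0}.
\end{equation*}

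The main obstacle is to show that $\|U\|$ is small. The almost-diagonalization estimate~\eqref{thm:quasi-diag-iii:a} controls the row sums of the matrix representing $U$ (after dividing by $|\lambda_R|\geq\delta$); the symmetric bound on $\langle T^*b_j^{(\varepsilon)},b_i^{(\varepsilon)}\rangle$ recorded in~\eqref{eq:induction-properties:b} inside the proof of Theorem~\ref{thm:quasi-diag} supplies the corresponding column sums. A Schur-type argument, using the unconditionality of the bi-parameter Haar basis and the $H^p(H^q)$/$H^p(H^q)^*$ duality pairing, then converts these combined row and column bounds into $\|U\|\leq\eta/(2(1+\eta))$ provided $\eta'$ is chosen small enough. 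Once this is achieved, $\Id+U$ is invertible on $H^p_m(H^q_n)$ with $\|(\Id+U)^{-1}\|\leq 1+\eta/2$, and I would take $E:=E\circ(\Id+U)^{-1}$ and $P:=P_0$; then $PTE=\Id$, and a final accounting of the constants above produces $\|E\|\|P\|\leq(1+\eta)/\delta$, as required.
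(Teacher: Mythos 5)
Your proposal is correct and follows essentially the same route as the paper: normalize the signs with the (norm-one) multiplier, apply Theorem~\ref{thm:quasi-diag} to get the almost-diagonalizing block basis with large diagonal, map $h_R\mapsto b_R^{(\varepsilon)}$, use the projection of Theorem~\ref{thm:projection} divided by the diagonal entries, and absorb the small off-diagonal perturbation by a Neumann series (the paper puts this inverse into $V=(UTJ)^{-1}U$ on the span $Y$ of the block basis, while you put it on the $E$ side; this is only a cosmetic difference). Your ``Schur-type'' control of $\|U\|$ is also fine here, since in this finite-dimensional setting a crude bound with constants depending on $m,n$ suffices once $\eta'$ is chosen small in terms of $m,n,\Gamma,\delta,\eta$, which is exactly how the paper proceeds.
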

Note that for $T = \delta \Id_n$, we have $\|E\|\|P\|=1/\delta$.

The proof of Theorem~\ref{thm:localized-factorization} relies on Theorem~\ref{thm:quasi-diag}, which builds a
block basis $(b_R^{(\varepsilon)})$ of the bi--parameter Haar system that almost--diagonalizes the
operator $T$ while simultaneously maintaining the large diagonal:
$\langle b_R^{(\varepsilon)}, T b_R^{(\varepsilon)}\rangle \geq \delta|R|$,
$R\in \dint^m\times \dint^N$.
Moreover, $(b_R^{(\varepsilon)})$ satisfies the local product conditions, see Section~\ref{sec:clpc},
which implies that $(b_R^{(\varepsilon)})$ is equivalent to the bi--parameter Haar system in
$H^p(H^q)$ and $H^p(H^q)^*$, and that the orthogonal projection onto $(b_R^{(\varepsilon)})$ is
bounded on $H^p(H^q)$ and on $H^p(H^q)^*$.

\begin{proof}[Proof of Theorem~\ref{thm:localized-factorization}]
  We will only prove the case where $T : H^p_m(H^q_N)\rightarrow H^p_m(H^q_N)$, since the other case
  is completely analogous.
  But before we begin with the actual proof, observe that we can assume that
  \begin{equation*}
    \langle h_R, T h_R \rangle \geq \delta |R|,
    \qquad R\in \dint^m\times \dint^N.
  \end{equation*}
  Indeed, define $\mathcal M : H^p_m(H^q_N)\rightarrow H^p_m(H^q_N)$ as the linear extension of
  $\mathcal M h_R = \sign(\langle h_R, T h_R \rangle)$, $R\in \dint^m\times\dint^N$.  Note that
  $\mathcal M$ is a norm $1$ operator and $\langle h_R, T \mathcal M h_R \rangle \geq \delta |R|$.

  Now let $1\leq p,q < \infty$, $\delta > 0$, $m,n\in \mathbb N$ and $\gamma,\eta > 0$ be fixed.
  Let $\eta' > 0$ be a small constant satisfying the estimates
  \begin{equation}\label{eq:proof:eta'}
    \eta'\frac{mn}{(1+\eta')\delta} < 1,
    \qquad\text{and}\qquad
    \frac{(1+\eta')^{3k}}{\delta - \eta'\frac{mn}{1+\eta'}}\leq \frac{1+\eta}{\delta}.
  \end{equation}
  By Theorem~\ref{thm:quasi-diag}, we can find an integer $N = N(m,n,\Gamma,\eta')$ so that for any
  operator $T : H^p_m(H^q_N)\rightarrow H^p_m(H^q_N)$ with $\|T\|\leq \Gamma$, there exist
  collections $(\mathscr B_R : R\in \dint^m\times\dint^n)$ and signs $(\varepsilon_Q : Q\in \drec)$
  defining a block basis of the Haar system $(b_R^{(\varepsilon)} : R\in \dint^m\times \dint^n)$ by
  \begin{equation*}
    b_R^{(\varepsilon)} = \sum_{Q \in \mathscr B_R} \varepsilon_Q h_Q,
    \qquad R\in \dint^m\times\dint^n,
  \end{equation*}
  so that the following conditions are satisfied:
  \begin{enumerate}[(a)]
  \item $\mathscr B_R \subset \dint^m\times\dint^N$,
    for all $R\in \dint^m\times\dint^n$.
    \label{enu:proof:factorization-a}

  \item $(\mathscr B_R : R\in \dint^m\times\dint^n)$ satisfies the local product conditions
    (see Section~\ref{sec:clpc}) with constants $C_X = 1$ and $C_Y = 1+\eta'$.
    \label{enu:proof:factorization-b}

  \item $(b_R^{(\varepsilon)} : R\in \dint^m\times\dint^n)$ almost--diagonalizes $T$ so that $T$ has
    large diagonal.  To be more precise, we have the estimates
    \begin{subequations}\label{eq:proof:factorization-c}
      \begin{align}
        \sum_{\substack{R' \in \dint^m\times\dint^n\\R'\neq R}} | \langle b_R^{(\varepsilon)}, T
        b_{R'}^{(\varepsilon)} \rangle | &\leq \eta' \|b_R^{(\varepsilon)}\|_2^2, & R &\in
        \dint^m\times\dint^n,
        \label{eq:proof:factorization-c:a}\\
        \langle b_R^{(\varepsilon)}, T b_R^{(\varepsilon)} \rangle &\geq \delta
        \|b_R^{(\varepsilon)}\|_2^2, & R &\in \dint^m\times\dint^n.
        \label{eq:proof:factorization-c:b}
      \end{align}
    \end{subequations}
    \label{enu:proof:factorization-c}
  \end{enumerate}

  The rest of the proof is exactely as outlined in~\cite[Chapter 5.2]{mueller:2005}.
  Also see~\cite{lechner_mueller:2014} for a specific bi--parameter variant
  following~\cite[Chapter 5.2]{mueller:2005}.
  Additionally, we will keep track of the exact value of our constants.
  Define the subspace $Y$ of $H^p_m(H^q_N)$ (see condition~\eqref{enu:proof:factorization-a}) by
  \begin{equation*}
    Y = \spn\{ b_R^{(\varepsilon)} : R\in \dint^m\times\dint^n\},
  \end{equation*}
  equipped with the $H^p(H^q)$ norm.
  Condition~\eqref{enu:proof:factorization-b} has three implications that we will now record.
  Firstly, for any $1\leq r, s < \infty$ and $1 < r', s' \leq \infty$ with
  $\frac{1}{r} + \frac{1}{r'} = 1$ and $\frac{1}{s} + \frac{1}{s'} = 1$, we have by
  Theorem~\ref{thm:projection} that the operator $E : H^p_m(H^q_n)\to Y$ defined as the
  linear extension of $h_R\mapsto b_R^{(\varepsilon)}$, $R\in \dint^m\times \dint^n$, satisfies
  \begin{equation}\label{eq:proof:factor-1}
    \vcxymatrix{
      H^p_m(H^q_n) \ar[r]^{\Id} \ar[d]_E & H^p_m(H^q_n)\\
      Y \ar[r]_\Id & Y \ar[u]_{E^{-1}}
    }
    \qquad \|E\|\|E^{-1}\|\leq (1+\eta')^{2k},
  \end{equation}
  where $k$ is the integer in Theorem~\ref{thm:projection}.
  Thirdly, by~\eqref{eq:proof:factorization-c:b} together with the projection Theorem~\ref{thm:projection},
  we obtain that the operator $U : H^p(H^q)\to Y$, defined by
  \begin{equation*}
    Uf
    = \sum_{R\in \dint^n\times \dint^n}
    \frac{\langle b_R^{(\varepsilon)}, f \rangle}
    {\langle b_R^{(\varepsilon)}, T b_R^{(\varepsilon)}\rangle}
    b_R^{(\varepsilon)},
    \qquad f\in H^p(H^q),
  \end{equation*}
  satisfies the estimate
  \begin{equation}\label{eq:almost-inverse:bounded}
    \|U f\|_{H^p(H^q)}
    \leq \frac{(1+\eta')^k}{\delta} \|f\|_{H^p(H^q)},
    \qquad f\in H^p(H^q).
  \end{equation}
  For $g = \sum_{R\in \dint^n} a_R b_R^{(\varepsilon)} \in Y$, Theorem~\ref{thm:projection} together
  with~\eqref{eq:proof:factorization-c} yields that
  \begin{equation}\label{eq:almost-inverse:inverse-estimate}
    \|UT g - g\|_{H^p(H^q)}
    \leq \eta'\frac{m n}{(1+\eta') \delta} \|g\|_{H^p(H^q)}.
  \end{equation}
  Let $J : Y\to H^p_m(H^q_N)$ denote the operator given by $Jy = y$.  Define the operator $V :
  H^p_m(H^q_N)\to Y$ by $V=(UTJ)^{-1}U$ (which is well defined by~\eqref{eq:proof:eta'}), and note
  that
  \begin{equation}\label{eq:proof:factor-2}
    \vcxymatrix{
      Y \ar[rr]^\Id \ar[dd]_J & & Y\\
      & Y \ar[ru]^{(UTJ)^{-1}} &\\
      H^p_m(H^q_N) \ar[rr]_T & & H^p_m(H^q_N) \ar[lu]_{U} \ar[uu]_V
    }
    \qquad \|J\|\|V\| \leq \frac{(1+\eta')^k}{\delta - \eta'\frac{mn}{1+\eta'}}.
  \end{equation}
  Merging diagram~\eqref{eq:proof:factor-1} with diagram~\eqref{eq:proof:factor-2} and
  recalling~\eqref{eq:proof:eta'} concludes the proof.
\end{proof}

\begin{rem}\label{rem:thm:localized-factorization}

  Similar to Remark~\ref{rem:thm:quasi-diag} (see also Remark~\ref{rem:thm:projection}), we could replace the
  bi--parameter Haar system $(h_K\otimes h_L)$ in Theorem~\ref{thm:localized-factorization} with a tensor
  product $(e_K\otimes f_L)$ that satisfies~\textref[P]{enu:p1}--\textref[P]{enu:p4} and some
  additional regularity assumptions (see
  Lemma~\ref{lem:product:local-product}~\eqref{enu:lem:product:local-product:i}
  and~\eqref{enu:lem:product:local-product:ii}),
  and simply repeat the proof.
  To be more precise, the large diagonal hypothesis of Theorem~\ref{thm:localized-factorization} would read
  as follows:
  \begin{equation*}
    |\langle e_K\otimes f_L, T e_K\otimes f_L \rangle| \geq \delta \|e_K\otimes f_L\|_2^2,
    \qquad\text{$K\in \dint^m,\ L\in \dint^N$},
  \end{equation*}
  respectively $K\in \dint^M,\ L\in \dint^n$.
\end{rem}

\section{Sums of finite dimensional Banach spaces}\label{sec:diag-glue-sums}

Section~\ref{subsec:diagonalization}, we discuss the necessary tools to diagonalize operators acting on a
direct sum of finite dimensional Banach spaces.
In Section~\ref{subsec:glueing}, we describe how to ``glue together'' factorization results in finite
dimensional Banach spaces, to obtain a factorization result in the direct sum of these spaces.
The proofs of the theorems in Section~\ref{subsec:diagonalization} and Section~\ref{subsec:glueing} have been
repeated in numerous situations see
e.g.~\cite{bourgain:1983, blower:1990, mueller:2005, wark:2007, lechner_mueller:2014}.
This is the author's attempt to avoid repetition in upcoming papers.
In Section~\ref{subsec:isom-non-isom} we discuss isomorphisms and non--isomorphisms of direct sums of
finite dimensional Banach spaces.
Finally, we give proofs of the main results Theorem~\ref{thm:factorization} and Theorem~\ref{thm:primary} in 
Section~\ref{subsec:proof-main-theorem} and Section~\ref{subsec:proof-primary}, respectively.

\subsection{Diagonalization}\label{subsec:diagonalization}\hfill

\noindent
We  briefly discuss two lemmas to diagonalize an operator on a direct sum of finite dimensional
Banach spaces.
The first lemma follows by a gliding hump argument, and is therefore limited to finite parameters in
the direct sum.
The second lemma for direct sums with infinite parameter, uses an additional hypothesis, see
Definition~\ref{dfn:property-pafds}.
For the space $\big(\sum_{n\in \mathbb N} H^p_n(H^q_n)\big)_\infty$, this hypothesis will be realized
by Theorem~\ref{thm:projections-that-annihilate}.

\begin{lem}\label{lem:diagonalization-1}
  Let $1 \leq r < \infty$, and let $(X_n)_{n\in \mathbb N}$ be a non--decreasing sequence of finite
  dimensional Banach spaces.
  Let $X^{(r)} = \big(\sum_{n\in \mathbb N} X_n\big)_r$ and $T : X^{(r)}\to X^{(r)}$ be a bounded
  linear operator.
  For each $\theta > 0$ there exist norm $1$ operators $U, V : X^{(r)}\to X^{(r)}$ such that
  $UV = \Id_{X^{(r)}}$,
  and $\widehat T$ given by $\widehat T = U T V$ is almost diagonal, i.e.
  \begin{equation}\label{eq:lem:diagonalization-1}
    \| \widehat T - \sum_{n=1}^\infty P_n \widehat T P_n \| \leq \theta.
  \end{equation}
  The norm $1$ operator $P_n : X^{(r)}\to X^{(r)}$ denotes the coordinate projection onto $X_n$.
  The above series of operators is understood as a formal series and does not indicate any form of
  convergence.
\end{lem}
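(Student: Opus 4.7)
The plan is a gliding--hump (blocking) diagonalization. Using the non-decreasing hypothesis, I fix isometric inclusions $j_{k,n}\colon X_k\hookrightarrow X_n$ and norm-one left inverses $\pi_{n,k}\colon X_n\to X_k$ for $k\leq n$ (the latter available in the applications we target, e.g.\ Haar-truncation projections in $H^p_n(H^q_n)$), and write $\iota_m\colon X_m\to X^{(r)}$ for the embedding onto the $m$-th summand and $\mathbb{P}_m$ for the associated coordinate functional. For a strictly increasing sequence $1=n_1<n_2<\cdots$ to be chosen, define
\[
  V\big((x_k)_k\big)=\sum_k \iota_{n_k}\,j_{k,n_k}(x_k),
  \qquad
  U\big((y_n)_n\big)=\sum_k \iota_k\,\pi_{n_k,k}(y_{n_k}).
\]
Because $r<\infty$, $V$ is an isometric embedding and $\|U\|\leq 1$; since $\pi_{n_k,k}j_{k,n_k}=\Id_{X_k}$ we have $UV=\Id_{X^{(r)}}$, forcing $\|U\|=\|V\|=1$. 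Writing $T_{m,n}=\mathbb{P}_m T\iota_n$, the block matrix entries of $\widehat T=UTV$ are $(\widehat T)_{k,j}=\pi_{n_k,k}\,T_{n_k,n_j}\,j_{j,n_j}$, so the entire task reduces to selecting $(n_k)$ so that $\|(\widehat T)_{k,j}\|$ is summably small for $k\neq j$.

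The essential input is two-sided decay of the matrix of $T$. On the one hand, for each fixed $n_0$ one has $\sum_m\|T_{m,n_0}\|^r<\infty$: picking a basis $e_1,\dots,e_d$ of the finite-dimensional $X_{n_0}$ and using $\sum_m\|T_{m,n_0}e_i\|^r\leq\|T\|^r$ gives a uniform bound by a routine comparison between operator norm and coefficients on a fixed basis, so in particular $\|T_{m,n_0}\|\to 0$ as $m\to\infty$ (``column tail''). The symmetric row statement is obtained by passing to the adjoint $T^*\colon(\sum_n X_n^*)_{r'}\to(\sum_n X_n^*)_{r'}$ and applying the column argument there to the finite-dimensional source $X_{m_0}^*$: for each fixed $m_0$, $\sum_n\|T_{m_0,n}^*\|^{r'}<\infty$, hence $\|T_{m_0,n}\|\to 0$ as $n\to\infty$ (``row tail''). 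A standard induction then produces $(n_k)$ with $\|T_{n_k,n_j}\|+\|T_{n_j,n_k}\|\leq\theta\,2^{-(k+j)}$ for every $j<k$ simultaneously: the first inequality uses the column-tail for $n_0=n_j$ (already fixed), the second uses the row-tail for $m_0=n_j$ and will automatically hold for the future choice of $n_j$ provided it is taken large enough at its inductive step.

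To close, for $x=(x_k)\in X^{(r)}$ the $k$-th block of $(\widehat T-\sum_m P_m\widehat T P_m)x$ is $\sum_{j\neq k}\pi_{n_k,k}\,T_{n_k,n_j}\,j_{j,n_j}(x_j)$; the chosen bounds give $\sum_{j\neq k}\|(\widehat T)_{k,j}\|\leq\theta\cdot 2^{-k+1}$, and a routine triangle/Hölder estimate in the $\ell^r$-sum yields $\|\widehat T-\sum_m P_m\widehat T P_m\|\leq\theta$. The principal obstacle I anticipate is precisely the row-tail step: unlike in scalar $\ell^r$ sequence spaces, rows of a bounded operator on $(\sum X_n)_r$ have no a priori decay, and the passage through the adjoint and the finite-dimensionality of each $X_n^*$ is the only place where the assumption $r<\infty$ (via $r'>1$) is genuinely used. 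Everything else is bookkeeping of the inductive selection and of the norms of $U$ and $V$.
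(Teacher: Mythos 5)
Your construction of $U$ and $V$ and the treatment of the below--diagonal entries (column tails) are fine, but the ``row tail'' step contains a genuine gap that kills the case $r=1$, which is within the scope of the lemma (and is one of the paper's motivating cases, $p=q=1$, $r=1$). The column argument applied to $T^{*}$ on $\big(\sum_n X_n^{*}\big)_{r'}$ requires the \emph{dual} exponent to be finite, i.e.\ $r'<\infty$, i.e.\ $r>1$ --- not ``$r'>1$'' as you write; for $r=1$ the dual is an $\ell^{\infty}$--sum and block norms of $T^{*}\iota_{m_0}x^{*}$ have no decay whatsoever. Indeed the claimed row decay is simply false for $r=1$: take all $X_n$ one--dimensional, so $X^{(1)}=\ell^{1}$, and $Tx=\big(\sum_n x_n\big)e_1$; then $T$ is bounded with $\|T\|=1$ but $\|T_{1,n}\|=1$ for every $n$. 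Consequently your inductive selection of $(n_k)$ cannot be run when $r=1$: there is no way to make the above--diagonal entries small merely by choosing the later column indices large. The standard gliding hump that the paper alludes to (its proof is omitted) must therefore use a substitute for row decay in the case $r=1$; the usual device is norm--compactness of the blocks $P_mT\iota_n j_{b,n}$ in the finite--dimensional spaces $L(X_b,X_m)$: pass to a subsequence along which these converge for all pairs $(m,b)$, split $T$ into the limit part (whose rows are summable, since $\sum_m\|P_mT\iota_n j_{b,n}x\|\le\|T\|\,\|x\|$) and a remainder whose rows do decay along the subsequence, and exploit in addition the compression $\pi_{n_k,k}$ on the left when estimating the limit part. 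As written, your argument proves the lemma only for $1<r<\infty$.

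A secondary point: you assume norm--one projections $\pi_{n,k}\colon X_n\to X_k$, which is not part of the hypotheses (the lemma speaks only of a non--decreasing sequence of finite--dimensional spaces). Note that this is not an innocent convenience in your scheme: with $\|U\|=\|V\|=1$ and $UV=\Id$, the range of $V$ is automatically $1$--complemented, and for your coordinate--wise $V$ this forces each $X_k$ to be $1$--complemented in $X_{n_k}$. In the paper's applications this is harmless --- the Haar system is $1$--unconditional in $H^p_n(H^q_n)$ and in its dual, so the truncation projections have norm one --- but you should either state it as a standing assumption or explain how the abstract statement is meant to be read; for $1<r<\infty$ the rest of your bookkeeping (the Schur--type estimate giving $\|\widehat T-\sum_n P_n\widehat T P_n\|\le\theta$ from the entrywise bounds) is correct.
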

We remark that an operator $D : X^{(r)}\to X^{(r)}$ is called \emph{diagonal operator} if
$D = \sum_{n=1}^\infty P_n D P_n$.
The proof of Lemma~\ref{lem:diagonalization-1} is a standard gliding hump argument and therefore
omitted.

Definition~\ref{dfn:property-pafds} is merely a surrogate of the corresponding theorems
in~\cite{bourgain:1983, blower:1990, mueller:2005, wark:2007, lechner_mueller:2014}.
\begin{dfn}\label{dfn:property-pafds}
  We say that a non--decreasing sequence of finite dimensional Banach spaces $(X_n)_{n\in \mathbb N}$
  with $\sup_n \dim X_n = \infty$ has the property that
  \emph{projections almost annihilate finite dimensional subspaces with constant $C_P > 0$}
  if the following conditions are satisfied:

  For all $n,d\in \mathbb N$ and $\eta > 0$ there exists an integer $N=N(n,d,\eta)$ such that for
  any $d$--dimensional subspace $F\subset X_N$ there exists a bounded projection
  $Q : X_N\to X_N$ and an isomorphism $S : X_n\to Q(X_N)$ such that
  \begin{enumerate}[(i)]
  \item $\|Q\| \leq C_P$,
  \item $\|S\|, \|S^{-1}\| \leq C_P$,
  \item $\|Q x\| \leq \eta \|x\|$, for all $x\in F$.
  \end{enumerate}
\end{dfn}

The following diagonalization Lemma~\ref{lem:diagonalization-2} allows us to diagonalize an operator on
direct sums with infinite parameter $r=\infty$, by additionally using the property defined in
Definition~\ref{dfn:property-pafds}.
\begin{lem}\label{lem:diagonalization-2}
  Let $(X_n)_{n\in \mathbb N}$ denote a non--decreasing sequence of finite dimensional Banach spaces
  with $\sup_n \dim X_n = \infty$ having the property that projections almost annihilate finite
  dimensional subspaces with constant $C_P>0$ (see Definition~\ref{dfn:property-pafds}).
  Now put $X^{(\infty)} = \big(\sum_{n\in \mathbb N} X_n\big)_\infty$ and let
  $T : X^{(\infty)}\to X^{(\infty)}$ be a bounded linear operator.
  For each $\theta > 0$ there exist operators $U, V : X^{(\infty)}\to X^{(\infty)}$ such that
  $UV = \Id_{X^{(\infty)}}$, and $\widehat T$ given by $\widehat T = U T V$ is almost diagonal,
  i.e.
  \begin{equation}\label{eq:lem:diagonalization-2}
    \| \widehat T - \sum_{n=1}^\infty P_n \widehat T P_n \| \leq \theta.
  \end{equation}
  The norm $1$ operator $P_n : X^{(\infty)}\to X^{(\infty)}$ denotes the coordinate projection onto
  $X_n$.
  The above series of operators is understood as a formal series and does not indicate any form of
  convergence.
  The operators $U$ and $V$ can be chosen such that $\|U\| \|V\| \leq C_P^3$.
\end{lem}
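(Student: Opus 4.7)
The plan is the standard Bourgain-style gliding-hump argument adapted to the $\ell_\infty$-sum via the almost-annihilation property in Definition~\ref{dfn:property-pafds}. First fix a summable sequence of positive tolerances $(\eta_k)_{k\ge 1}$, with $\eta_k$ of order $1/k^2$ and small enough in terms of $\theta$, $C_P$ and $\|T\|$ to absorb all constants below. Inductively I would construct strictly increasing indices $n_1<n_2<\cdots$ together with projections $Q_k : X_{n_k} \to X_{n_k}$ and isomorphisms $S_k : X_k \to Q_k(X_{n_k})$ satisfying the norm bounds of Definition~\ref{dfn:property-pafds} and the annihilation $\|Q_k z\| \le \eta_k\|z\|$ for $z \in F_k$, where $F_k \subset X_{n_k}$ is a finite-dimensional subspace encoding all the relevant off-diagonal interactions of $T$ with the previous steps $j<k$. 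Concretely $F_k$ contains $\sum_{j<k} P_{n_k}T(S_j X_j)$ (to handle the forward direction) together with an analogous auxiliary finite-dimensional subspace designed to handle the symmetric backward interactions; crucially $\dim F_k$ is bounded in terms of $\dim X_1,\dots,\dim X_{k-1}$ only (and not in $n_k$), so Definition~\ref{dfn:property-pafds} applied with $n=k$, $d=\dim F_k$, $\eta=\eta_k$ produces the required triple $(n_k,Q_k,S_k)$ with $n_k\ge n_{k-1}+1$.

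Once the triples are in hand, I would define $V, U : X^{(\infty)}\to X^{(\infty)}$ by $V((x_k)_k) = (y_n)_n$ with $y_{n_k}=S_k x_k$ and $y_n=0$ otherwise, and $U((y_n)_n) = (z_k)_k$ with $z_k = S_k^{-1}Q_k y_{n_k}$. The norm bounds $\|V\|\le C_P$ and $\|U\|\le C_P^2$ follow immediately from Definition~\ref{dfn:property-pafds}, yielding $\|U\|\|V\|\le C_P^3$. Since $S_k$ maps into $Q_k(X_{n_k})$ we have $Q_k S_k = S_k$, and hence $UV=\Id_{X^{(\infty)}}$. The off-diagonal block of $\widehat T := UTV$ is $\widehat T_{jk} := P_j\widehat T P_k = S_j^{-1}Q_j P_{n_j}T S_k$: for $j>k$ the inclusion $P_{n_j}T(S_kX_k) \subset F_j$ gives the forward bound $\|\widehat T_{jk}\|\le C_P^2\|T\|\eta_j$, and for $j<k$ the backward ingredient of $F_k$ makes $Q_j P_{n_j}T$ small on $Q_k(X_{n_k})$, yielding $\|\widehat T_{jk}\|\le C_P^2\|T\|\eta_k$. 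Summing the row $j$ contributions gives $\sum_{k\ne j}\|\widehat T_{jk}\| \le C_P^2\|T\|\big((j-1)\eta_j + \sum_{k>j}\eta_k\big)$, which is $\le\theta$ uniformly in $j$ by the choice of $\eta_k$, proving $\|\widehat T - \sum_n P_n\widehat T P_n\|\le\theta$.

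The main obstacle is designing the backward ingredient of $F_k$: literally, Definition~\ref{dfn:property-pafds} only annihilates a subspace of $X_{n_k}$ itself, whereas ``$Q_j P_{n_j}T$ is small on $Q_k(X_{n_k})$'' for $j<k$ is naturally an adjoint-type condition on $\operatorname{range}(Q_k)$. In the Hardy-space realization via Theorem~\ref{thm:projections-that-annihilate} this is immediate because the underlying block-basis projection is orthogonal and therefore annihilates prescribed finite-dimensional subspaces of the primal and of the dual space simultaneously (one simply enlarges the local frequency weight in Lemma~\ref{lem:comb-1} with a net of functionals from $X_{n_k}^*$). In the abstract setting one chooses the backward ingredient to be a lifting in $X_{n_k}$, complementary to the kernel of each operator $Q_j P_{n_j}T|_{X_{n_k}}:X_{n_k}\to Q_j(X_{n_j})$, and then verifies that approximate annihilation of this lifting forces $Q_k(X_{n_k})$ to lie approximately in $\bigcap_{j<k}\ker(Q_j P_{n_j}T|_{X_{n_k}})$. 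With this bookkeeping in place, the row-sum estimates above close the argument.
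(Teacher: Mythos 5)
The paper gives no proof of this lemma (it is delegated to the cited diagonalization arguments), so your attempt has to be judged against that standard scheme. Your skeleton is the right one: the inductive choice of $n_k, Q_k, S_k$ from Definition~\ref{dfn:property-pafds}, the operators $U$ and $V$, the bound $\|U\|\|V\|\le C_P^3$, and the treatment of the entries $\widehat T_{kj}$ with $j<k$ by annihilating $F_k\supset \operatorname{span}\bigcup_{j<k}P_{n_k}T\iota_{n_j}S_j(X_j)$ (whose dimension is independent of $n_k$) are all correct. The gaps are in the other half. Your abstract device for the entries $\widehat T_{jk}$, $j<k$ --- put a complement of $\ker\bigl(Q_jP_{n_j}T|_{X_{n_k}}\bigr)$ into $F_k$ and conclude that $Q_k(X_{n_k})$ lies almost inside the kernels --- is not a valid implication: the condition $\|Q_k x\|\le\eta\|x\|$ on a prescribed subspace constrains (approximately) the kernel of $Q_k$, not its range. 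For instance, on $\ell_\infty^3$ with $A=e_1^*$, the norm-one projection $Qx=x_2(e_1+e_2)$ annihilates the complement $\operatorname{span}(e_1)$ of $\ker A$ exactly, yet $A$ has norm one on the range of $Q$; nothing in Definition~\ref{dfn:property-pafds} prevents the projection it supplies from behaving this way. Your alternative (feeding dual functionals into the frequency weight of Lemma~\ref{lem:comb-1}) proves a lemma with a stronger hypothesis than Definition~\ref{dfn:property-pafds}, not the statement as written. In fact these backward \emph{entries} need neither device: $Q_jP_{n_j}T$ has finite rank, and since every functional on $\bigl(\sum_m X_m\bigr)_\infty$ restricts to the $c_0$-sum as an absolutely summable sequence of coordinate functionals, $\|Q_jP_{n_j}T\iota_m\|\to 0$ as $m\to\infty$; so it suffices to choose $n_k$ large at step $k$ (which also disposes of the issue of making $n_k>n_{k-1}$, modulo the harmless remark that the $N$ of Definition~\ref{dfn:property-pafds} may be taken arbitrarily large).

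The more serious gap is your last step: summing row bounds on the entries $\widehat T_{jk}$ only controls $\widehat T-\sum_n P_n\widehat TP_n$ if $\widehat T$ is determined by its matrix, i.e. if $(UTVx)_j=\sum_k S_j^{-1}Q_jP_{n_j}T\iota_{n_k}S_kx_k$. On an $\ell^\infty$-sum this fails: $Vx=\sum_k\iota_{n_k}S_kx_k$ converges only coordinatewise, and a bounded $T$ (think of operators built from Banach-limit type functionals, which vanish on every finitely supported element) need not commute with such limits. What actually has to be estimated is $\sup_j\bigl\|Q_jP_{n_j}T\bigl(\sum_{k>j}\iota_{n_k}S_kx_k\bigr)\bigr\|$, and entrywise smallness says nothing about this infinite sum; this is precisely what separates the present lemma from the gliding-hump Lemma~\ref{lem:diagonalization-1}. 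A correct argument needs an additional ingredient here, for example: represent the fixed finite-rank row $Q_jP_{n_j}T$ by finitely many functionals, choose $n_{j+1},n_{j+2},\dots$ deep enough that the $c_0$-parts of these functionals are negligible there, and in addition keep an infinite reservoir of admissible future coordinates which is thinned at each step so that these functionals have small norm on the whole sub-sum over the reservoir (this is possible because, by disjointness of supports in the $\ell^\infty$-sum, such norms are superadditive over disjoint coordinate sets, so an infinite disjoint partition of the reservoir contains infinite pieces of arbitrarily small norm). Without an ingredient of this kind your concluding inequality $\|\widehat T-\sum_nP_n\widehat TP_n\|\le\theta$ does not follow from what you have established.
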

The proof is completely analogous to the corresponding diagonalization theorems
in~\cite{bourgain:1983, blower:1990, mueller:2005, wark:2007, lechner_mueller:2014}, and we
therefore omit it.

\subsection{Glueing}\label{subsec:glueing}\hfill

Here, we ``glue together'' factorization diagrams for finite dimensional Banach spaces, to obtain a
factorization diagram for the direct sum of these spaces.
We distinguish between finite and infinite parameters.
Again, we refer
to~\cite{bourgain:1983, blower:1990, mueller:2005, wark:2007, lechner_mueller:2014}.

\begin{pro}\label{pro:local}
  Let $(X_n)_{n\in \mathbb N}$ be an increasing sequence of finite dimensional Banach spaces.
  Let $\Gamma > 0$ and $\eta > 0$ be fixed.
  Assume that for each $n\in \mathbb N$ there exists an integer $N=N(n,\Gamma,\eta)$ such that for
  any operator $T_n : X_N\to X_N$ with $\|T_n\|\leq \Gamma$ one can find operators
  $R_n : X_n\to X_N$ and $S_n :X_N\to X_n$ so that
  \begin{equation}\label{eq:pro:local:1}
    \vcxymatrix{
      X_n \ar[r]^{\Id_{X_n}} \ar[d]_{S_n} & X_n\\
      X_N \ar[r]_{H_n} & X_N \ar[u]_{R_n}
    }
  \end{equation}
  where $H_n = T_n$ or $H_n = \Id_{X_N} - T_n$, and $\|R_n\|\|S_n\|\leq 1+\eta$.

  Let $1 \leq r \leq \infty$, put $X^{(r)} = \big(\sum_{n\in \mathbb N} X_n\big)_r$ and let
  $T : X^{(r)}\to X^{(r)}$ be a bounded, linear operator with $\|T\|\leq \Gamma$.
  If $r=\infty$ (and only then) we assume additionally that $(X_n)_{n\in \mathbb N}$ has the
  property that projections almost annihilate finite dimensional subspaces with constant $C_P>0$
  (see Definition~\ref{dfn:property-pafds}).
  
  Then there exist operators $P, Q : X^{(r)}\to X^{(r)}$ such that
  \begin{equation}\label{eq:pro:local:2}
    \vcxymatrix{
      X^{(r)} \ar[r]^{\Id_{X^{(r)}}} \ar[d]_P & X^{(r)}\\
      X^{(r)} \ar[r]_H & X^{(r)} \ar[u]_Q
    }
  \end{equation}
  for $H = T$ or $H = \Id_{X^{(r)}} - T$.
  For each $\varepsilon > 0$ the operators $P$ and $Q$ can be chosen so that
  $\|P\| \|Q\| \leq 1 + \eta + \varepsilon$, if $r < \infty$, and for $r=\infty$ we obtain
  $\|P\| \|Q\| \leq C_P^3(1 + \eta + \varepsilon)$.
\end{pro}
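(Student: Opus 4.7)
The plan is to combine the diagonalization lemmas of Section~\ref{subsec:diagonalization} with the local factorization hypothesis~\eqref{eq:pro:local:1}, together with a subsequence and pigeonhole argument. First, apply Lemma~\ref{lem:diagonalization-1} (when $r < \infty$) or Lemma~\ref{lem:diagonalization-2} (when $r = \infty$) with a small parameter $\theta > 0$ to be fixed later in terms of $\varepsilon$. This produces operators $U, V : X^{(r)} \to X^{(r)}$ with $UV = \Id_{X^{(r)}}$ such that $\|\widehat T - D\| \leq \theta$, where $\widehat T = UTV$ and $D = \sum_n P_n \widehat T P_n$ is the formal block diagonal with blocks $D_n = P_n \widehat T P_n : X_n \to X_n$. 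In the finite case $\|U\|\|V\| = 1$ and $\|D_n\| \leq \Gamma + \theta$; in the infinite case $\|U\|\|V\| \leq C_P^3$ and $\|D_n\| \leq C_P^6\Gamma + \theta$. Denote the resulting uniform bound on $\|D_n\|$ by $\Gamma'$.

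Next, build a rapidly growing subsequence $n_1 < n_2 < \cdots$ by choosing $n_{k+1} = N(n_k, \Gamma', \eta)$, using the integer $N$ from the local hypothesis. Using the inclusion $X_{n_k} \subset X_{n_{k+1}}$ together with a bounded projection onto $X_{n_k}$ (available in the concrete Hardy-space setting, and in the abstract $r=\infty$ case supplied by Definition~\ref{dfn:property-pafds}), lift each $D_{n_k}$ to an operator $\widetilde D_k : X_{n_{k+1}} \to X_{n_{k+1}}$ of norm at most $\Gamma'$. Applying~\eqref{eq:pro:local:1} to $\widetilde D_k$ then produces operators $R_k : X_{n_k} \to X_{n_{k+1}}$ and $S_k : X_{n_{k+1}} \to X_{n_k}$ with $\|R_k\|\|S_k\| \leq 1+\eta$ and $R_k H_k S_k = \Id_{X_{n_k}}$, where $H_k = \widetilde D_k$ or $H_k = \Id - \widetilde D_k$.

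By pigeonhole, pass to an infinite subset $\mathcal K \subset \mathbb N$ on which the binary choice $H_k$ is constant; by symmetry (replacing $T$ by $\Id - T$ if necessary) we may assume $H_k = \widetilde D_k$ for all $k \in \mathcal K$. Reindexing along $\mathcal K$ preserves the isomorphism class of the direct sum, so we may view $X^{(r)} \cong \big(\sum_{k \in \mathcal K} X_{n_k}\big)_r$. Assemble $P, Q : X^{(r)} \to X^{(r)}$ blockwise from the $(R_k, S_k)$ along $\mathcal K$, precomposing with $V$ and postcomposing with $U$, so that the diagonal part of $U T V = \widehat T$ contributes exactly $\sum_{k \in \mathcal K} R_k \widetilde D_k S_k = \Id$ on the chosen copy of $X^{(r)}$. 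The result is an operator of the form $\Id + E$, where $E$ comes entirely from the off-diagonal error $\widehat T - D$ and has operator norm at most $\theta(1+\eta)$.

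The main technical step, and the place where $\varepsilon$ enters, is absorbing this perturbation. For $\theta$ sufficiently small depending on $\eta$ and $\varepsilon$, the operator $\Id + E$ is invertible with $\|(\Id + E)^{-1} - \Id\|$ arbitrarily small, and composing $Q$ with $(\Id + E)^{-1}$ yields the exact factorization~\eqref{eq:pro:local:2}. Tracking the norm contributions gives $\|P\|\|Q\| \leq 1 + \eta + \varepsilon$ when $r < \infty$, with an additional factor of $C_P^3$ coming from $\|U\|\|V\|$ when $r = \infty$. The bookkeeping of matching the subsequence, the pigeonhole selection, and the perturbation bound so that the final constant conforms to the target $1 + \eta + \varepsilon$ is the principal obstacle; the individual steps are otherwise either immediate from the hypotheses or standard Neumann-series inversion.
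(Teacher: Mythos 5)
Your overall skeleton (diagonalize via Lemma~\ref{lem:diagonalization-1} or Lemma~\ref{lem:diagonalization-2}, pass to a subsequence, pigeonhole the alternative, assemble blockwise, absorb the off--diagonal error by a Neumann series, conjugate with $U,V$) is exactly the standard glueing argument that the paper, which gives no proof of its own, delegates to Blower and Lechner--M\"uller. However, the step where the hypothesis~\eqref{eq:pro:local:1} is invoked is misdirected, and this is a genuine gap. The whole point of the localization scheme is that~\eqref{eq:pro:local:1} is applied to the \emph{actual} diagonal block of the diagonalized operator living on the large coordinate: for each $n$ one takes $N=N(n,\Gamma',\eta)$ and applies the hypothesis to $P_N\widehat T P_N : X_N\to X_N$, obtaining maps $X_n\to X_N\to X_n$ which factor $\Id_{X_n}$ through that block; these disjointly indexed factorizations are what glue to a factorization through the diagonal part of $\widehat T$. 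You instead apply~\eqref{eq:pro:local:1} to $\widetilde D_k$, an operator you manufacture by compressing the \emph{small} block $D_{n_k}=P_{n_k}\widehat T P_{n_k}$ and re--embedding it into $X_{n_{k+1}}$. This $\widetilde D_k$ is not a block of $\widehat T$ (nor of its diagonal part), so the identity $R_k\widetilde D_k S_k=\Id_{X_{n_k}}$ carries no information about $T$. Concretely, once $P$ and $Q$ are assembled blockwise with $S_k$ mapping the $n_k$--th summand into the $n_{k+1}$--st, the diagonal contribution of $\widehat T$ is $R_k D_{n_{k+1}} S_k$, over which you have no control; your claim that ``the diagonal part of $UTV$ contributes exactly $\sum_k R_k\widetilde D_k S_k=\Id$'' is unfounded, and the Neumann--series step then has nothing of size $\theta$ to absorb.

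In addition, the lift itself is not available in the setting of the proposition: defining $\widetilde D_k$ requires a uniformly bounded projection of $X_{n_{k+1}}$ onto the canonical copy of $X_{n_k}$, which is not among the hypotheses, and Definition~\ref{dfn:property-pafds} does not provide it --- it yields, for a prescribed finite dimensional subspace $F\subset X_N$, a projection onto \emph{some} well--isomorphic copy of $X_n$ that almost annihilates $F$ (this is the tool behind Lemma~\ref{lem:diagonalization-2} and Theorem~\ref{thm:projections-that-annihilate}), not a projection onto the given subspace $X_{n_k}\subset X_{n_{k+1}}$; for $r<\infty$ you offer no abstract justification at all. Likewise, the assertion that $X^{(r)}\cong\big(\sum_{k\in\mathcal K}X_{n_k}\big)_r$ after the pigeonhole is not justified from the stated hypotheses. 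If you redirect the argument so that the hypothesis is applied to the genuine blocks $P_{N(n)}\widehat T P_{N(n)}$ (with the larger constant $\Gamma'$ bounding the blocks of $UTV$, as you correctly do for $r=\infty$), the artificial lift disappears, the assembled diagonal identity actually holds, and the remaining bookkeeping (injectivity of $n\mapsto N(n)$, uniform choice of the alternative, perturbation, and the constants $1+\eta+\varepsilon$ resp.\ $C_P^3(1+\eta+\varepsilon)$) goes through as in the cited sources.
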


\begin{proof}
  For a proof see e.g.~\cite{blower:1990, lechner_mueller:2014}.
\end{proof}

\subsection{Isomorphisms and non--isomorphisms}\label{subsec:isom-non-isom}\hfill

Here, we briefly discuss two results on sums of finite dimensional Banach spaces.
Together, they show us that $\big(\sum_{m,n} H^p_m(H^q_n)\big)_r$ is isomorphic to
$\big(\sum_n H^p_n(H^q_n)\big)_s$ if and only if $r=s$.
The same is true for $\big(\sum_{m,n} H^p_m(H^q_n)^*\big)_r$ and
$\big(\sum_n H^p_n(H^q_n)^*\big)_s$.

The following Proposition~\ref{pro:iso-iso} is a simple consequence of Pe{\l}czy{\'n}ski's decomposition
method.
Therefore, we omit the proof.
\begin{pro}\label{pro:iso-iso}
  Let $1\leq r \leq \infty$ and let $(X_{m,n} : m,n\in \mathbb N)$ denote a sequence of finite
  dimensional Banach spaces such that $X_{m,n}\subset X_{m+1,n}$ and $X_{m,n}\subset X_{m,n+1}$.
  Then the space $\big(\sum_{m,n\in \mathbb N} X_{m,n}\big)_r$ is isometrically isomorphic to
  $\big(\sum_{n\in \mathbb N} X_{n,n}\big)_r$.
\end{pro}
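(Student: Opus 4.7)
The plan is to apply Pe{\l}czy{\'n}ski's decomposition method to $A := \big(\sum_{m,n\in \mathbb N} X_{m,n}\big)_r$ and $B := \big(\sum_{n\in \mathbb N} X_{n,n}\big)_r$. The easy half is that $B$ sits as a $1$-complemented isometric subspace of $A$: the map sending $(x_n)_n \in B$ to the doubly indexed sequence $(y_{m,n})$ with $y_{n,n} = x_n$ and $y_{m,n} = 0$ for $m\neq n$ is an isometric embedding $B \hookrightarrow A$, whose left inverse is the norm-one coordinate projection $A \to B$ reading off the diagonal entries.

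For the reverse complemented embedding, I would fix a bijection $\phi : \mathbb N \to \mathbb N^2$ satisfying $\max \phi(k) \leq k$ for every $k\in\mathbb N$; the standard Cantor-type diagonal enumeration of $\mathbb N^2$ supplies one, since its $k$-th pair $(m_k,n_k)$ lies on the $d$-th diagonal for some $d\leq k$. By the nesting hypothesis, $X_{\phi(k)} \subset X_{k,k}$ isometrically, and the $\ell_r$-sum of these inclusions produces an isometric embedding
\begin{equation*}
  A \;\cong\; \big(\sum_{k\in \mathbb N} X_{\phi(k)}\big)_r \;\hookrightarrow\; \big(\sum_{k\in \mathbb N} X_{k,k}\big)_r \;=\; B.
\end{equation*}
To complement this embedding, I would select projections $\pi_k : X_{k,k} \to X_{\phi(k)}$ of uniformly bounded norm; their $\ell_r$-sum is then a bounded projection $B \to A$.

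Since $\mathbb N\times\mathbb N \cong \mathbb N$ as countable sets, both $A$ and $B$ are isometric to their countable $\ell_r$-powers, so Pe{\l}czy{\'n}ski's decomposition method now delivers $A \cong B$ from the two mutually complemented embeddings. The hardest part is securing uniform boundedness of the complementing projections $\pi_k$, as this does not follow from the abstract nesting hypothesis alone; in the Hardy-space applications targeted by the paper it is automatic, because the unconditional bi-parameter Haar basis supplies norm-one projections onto any Haar block, and by choosing $\phi$ so that the image of each inclusion is a full Haar block in $X_{k,k}$ one can in fact promote the argument to a genuine isometric identification, consistent with the statement of the proposition.
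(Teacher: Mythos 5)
Your overall route (diagonal embedding of $\big(\sum_{n} X_{n,n}\big)_r$ into $\big(\sum_{m,n} X_{m,n}\big)_r$, a re-indexing $\phi$ with $\max\phi(k)\leq k$ for the reverse complemented embedding, then Pe{\l}czy{\'n}ski's decomposition method) is exactly the method the paper invokes, since the paper states that the proposition is a simple consequence of Pe{\l}czy{\'n}ski's decomposition method and omits the details. However, as a proof of the statement there are two genuine gaps. First, the step you yourself flag is not a minor technicality: the nesting hypothesis $X_{m,n}\subset X_{m+1,n}$, $X_{m,n}\subset X_{m,n+1}$ gives isometric inclusions but no control whatsoever on projection constants, so the uniformly bounded projections $\pi_k : X_{k,k}\to X_{\phi(k)}$ you need for the complemented embedding of $\big(\sum_{m,n} X_{m,n}\big)_r$ into $\big(\sum_k X_{k,k}\big)_r$ are simply not available at this level of generality; acknowledging this does not close it. To run your argument one must either build the uniform complementation into the hypotheses or use the concrete structure of the intended application, where the inclusion $H^p_m(H^q_n)\subset H^p_k(H^q_k)$, $k=\max(m,n)$, is the span of a subfamily of the bi--parameter Haar system and $1$--unconditionality makes the coordinate projection have norm one.

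Second, and more fundamentally, the proposition asserts an \emph{isometric} isomorphism, and Pe{\l}czy{\'n}ski's decomposition method cannot deliver that: even when every embedding and every projection in sight has norm one, the absorption identifications in the method (such as $Y\simeq X\oplus E$ for a complemented subspace $X$, and $X\simeq \big(\sum X\big)_r$) are only isomorphic, so the output is an isomorphism with some constant, not an isometry. Your closing remark that one can ``promote the argument to a genuine isometric identification'' by choosing $\phi$ so that each image is a full Haar block only reproduces the norm-one projections you already needed for complementation; it does not upgrade the decomposition method's conclusion. An isometric conclusion would require a different, explicit rearrangement argument (exhibiting a norm-preserving bijective correspondence between the two $\ell_r$-sums), which neither your proposal nor the bare nesting hypothesis provides; alternatively one must be content with an isomorphism, which is all that is actually used later in conjunction with Theorem~\ref{thm:pitt}.
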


Theorem~\ref{thm:pitt} is a finite dimensional Banach space variant of Pitt's theorem.
See also~\cite{defant:lopez-molina:rivera:2000}.
\begin{thm}\label{thm:pitt}
  Let $1 \leq r,s \leq \infty$, and let $(X_n)_{n\in \mathbb N}$ denote an increasing sequence of
  finite dimensional Banach spaces.
  Let $X^{(r)}$ denote $\big(\sum_{n\in \mathbb N} X_n\big)_r$ and let $X^{(s)}$ denote the space
  $\big(\sum_{n\in \mathbb N} X_n\big)_s$.
  If $T : X^{(r)}\to X^{(s)}$ is an isomorphism, then $r=s$.
  Consequently, all the spaces $X^{(r)}$, $1\leq r \leq \infty$ are mutually non-isomorphic.
\end{thm}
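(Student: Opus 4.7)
The plan is to first dispense with the case where exactly one of $r,s$ equals $\infty$ by a separability argument, and then to treat the remaining case $1\le r\neq s<\infty$ by adapting the classical gliding-hump proof of Pitt's theorem. Assuming some $X_{n_0}\neq\{0\}$ (otherwise the statement is vacuous), fix a unit vector $x_0\in X_{n_0}$. Since $x_0$ lies isometrically in every $X_n$ with $n\ge n_0$, the map $(a_n)\mapsto (a_n x_0)_{n\ge n_0}$ isometrically embeds $\ell_\infty$ into $X^{(\infty)}$, so $X^{(\infty)}$ is non-separable; whereas for any $r<\infty$ the space $X^{(r)}$ is separable, being the closure of $\bigcup_N\bigl(\sum_{n\le N}X_n\bigr)_r$. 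This rules out $X^{(\infty)}\cong X^{(r)}$ whenever $r<\infty$.

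For the remaining case $1\le r\neq s<\infty$, I would suppose toward a contradiction that $T\colon X^{(r)}\to X^{(s)}$ is an isomorphism. Define $v_n\in X^{(r)}$ (for $n\ge n_0$) to be the vector equal to $x_0$ in the $n$-th coordinate and $0$ elsewhere; then $(v_n)$ is isometrically equivalent to the unit vector basis of $\ell_r$ and is weakly null in $X^{(r)}$ because $r<\infty$. Hence $(T v_n)$ is a bounded weakly null sequence in $X^{(s)}$ with $\|T v_n\|\ge\|T^{-1}\|^{-1}$. Since $(X_m)_{m\in\mathbb N}$ is an unconditional finite-dimensional decomposition of $X^{(s)}$ whose tail projections $\Id-\sum_{m\le N}P_m$ tend to $0$ strongly (this uses $s<\infty$), a standard gliding-hump selection produces a subsequence $(T v_{n_k})$ and a disjointly supported sequence $(w_k)\subset X^{(s)}$ with $\sum_k\|T v_{n_k}-w_k\|$ as small as desired.

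By the very definition of the $X^{(s)}$-norm, any disjointly supported sequence satisfies $\bigl\|\sum_k a_k w_k\bigr\|_{X^{(s)}}=\bigl(\sum_k|a_k|^s\|w_k\|^s\bigr)^{1/s}$; after passing to a further subsequence along which $\|w_k\|\to c>0$, we obtain that $(w_k)$, and hence by a small perturbation also $(T v_{n_k})$, is equivalent to the unit vector basis of $\ell_s$. Therefore $T$ restricts to an isomorphism between the closed linear span of $(v_{n_k})$, which is isomorphic to $\ell_r$, and the closed linear span of $(T v_{n_k})$, which is isomorphic to $\ell_s$; this contradicts the classical fact that $\ell_r\not\cong\ell_s$ for distinct $r,s\in[1,\infty)$ (for instance by type and cotype, or by uniqueness of the symmetric basis). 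The main obstacle is the disjointification step, which is nevertheless a routine gliding-hump argument given the unconditional finite-dimensional decomposition structure of $X^{(s)}$; the remainder is a direct norm comparison on matching $\ell_r$- and $\ell_s$-sequences.
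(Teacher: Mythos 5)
Your strategy is exactly the one the paper has in mind (the paper omits the proof, describing it as a standard gliding hump for $1\le r,s<\infty$ plus a separability argument for the cases involving $\infty$), and most of the steps are sound: the separability dichotomy, the disjointification against the $1$--unconditional finite dimensional decomposition of $X^{(s)}$ whose tail projections tend strongly to zero, the exact computation $\big\|\sum_k a_k w_k\big\|_{X^{(s)}}=\big(\sum_k|a_k|^s\|w_k\|^s\big)^{1/s}$ for disjointly supported vectors, and the small perturbation principle. There is, however, one step that fails as stated: the claim that $(v_n)$ is weakly null in $X^{(r)}$ ``because $r<\infty$''. This is false for $r=1$: if, say, all $X_n$ are one dimensional, then $X^{(1)}=\ell_1$ and $(v_n)$ is its unit vector basis, which is not weakly null (test against the functional $(\phi_n)_n\in\big(\sum_n X_n^*\big)_\infty$ with $\|\phi_n\|=1$ and $\phi_n(x_0)=1$ for all $n$). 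Your gliding hump genuinely needs this weak nullity --- it is what makes the finite--rank initial projections $P_{[1,N]}Tv_n$ norm null, so that the humps can be pushed to the right --- so as written the argument does not cover the case $r=1<s<\infty$, which certainly occurs in the paper's applications.

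The repair is cheap and worth stating: since $r\neq s$ and both are finite, at least one of them exceeds $1$, so after replacing $T$ by $T^{-1}:X^{(s)}\to X^{(r)}$ if necessary you may assume the domain exponent is $>1$. In that case the dual of the domain is $\big(\sum_n X_n^*\big)_{r'}$ with $r'<\infty$, so every functional has norm--null coordinates and $(v_n)$ is indeed weakly null; the rest of your argument then runs unchanged and produces subspaces witnessing $\ell_r\cong\ell_s$, the desired contradiction. (You also tacitly use that the inclusions $X_n\subset X_{n+1}$ are isometric when you claim isometric copies of $\ell_\infty$ and of the $\ell_r$ basis; this is harmless here, since in the paper's setting the norms on the $X_n$ are the restrictions of a single norm, and only two--sided bounds on $\|x_0\|_{X_n}$ are actually needed.)
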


The proof is a standard gliding hump argument for $1\leq r,s < \infty$.
The remaining cases follow immediately by considering the separability/non--separability of the respective spaces.
For those reasons, we omit the proof.

\subsection{Proof of the main result Theorem~\ref{thm:factorization}}\label{subsec:proof-main-theorem}\hfill

\noindent
For convenience, we reassert Theorem~\ref{thm:factorization} here.
\begin{thm}[Main result Theorem~\ref{thm:factorization}]\label{thm:factorization:restated}
  Let $1\leq p,q < \infty$ and $1\leq r \leq \infty$,
  and for all $n\in \mathbb N$ let $X_n$ denote the space $H^p_n(H^q_n)$ or its dual
  $H^p_n(H^q_n)^*$.
  For any $\eta > 0$ and any operator
  $T : \big(\sum_{n\in \mathbb N} X_n\big)_r\to
  \big(\sum_{n\in \mathbb N} X_n\big)_r$,
  there exist operators
  $R,S : \big(\sum_{n\in \mathbb N} X_n\big)_r\to \big(\sum_{n\in \mathbb N} X_n\big)_r$
  such that
  \begin{equation}\label{eq:thm:factorization:restated}
    \vcxymatrix{
      \big(\sum_{n\in \mathbb N} X_n\big)_r \ar[r]^{\Id} \ar[d]_S
      & \big(\sum_{n\in \mathbb N} X_n\big)_r\\
      \big(\sum_{n\in \mathbb N} X_n\big)_r \ar[r]_H
      & \big(\sum_{n\in \mathbb N} X_n\big)_r \ar[u]_R
    }
  \end{equation}
  for $H = T$ or $H = \Id - T$ and $\|R\| \|S\| \leq 2 + \eta$.
\end{thm}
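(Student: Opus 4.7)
The plan is to follow the three-step Bourgain localization strategy announced in Section~\ref{sec:results}: first diagonalize $T$, then solve the local factorization problem on each summand $X_n$, and finally glue the local diagrams via Proposition~\ref{pro:local}.

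\emph{Step 1 (diagonalization).} I treat $1\leq r<\infty$ and $r=\infty$ separately. For $r<\infty$, Lemma~\ref{lem:diagonalization-1} directly furnishes norm-one operators $U,V$ with $UV=\Id$ such that $\widehat{T}:=UTV$ is diagonal up to a prescribed error $\theta>0$. For $r=\infty$, I invoke Lemma~\ref{lem:diagonalization-2}, which needs $(X_n)$ to have the property that projections almost annihilate finite-dimensional subspaces (Definition~\ref{dfn:property-pafds}). This hypothesis is exactly what Theorem~\ref{thm:projections-that-annihilate} supplies for both $(H_n^p(H_n^q))$ and $(H_n^p(H_n^q)^*)$, with constant $C_P$ arbitrarily close to $1$. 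In either case I am reduced, modulo a small perturbation, to factoring $\Id$ through the block diagonal $D=\sum_n P_n\widehat{T}P_n$.

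\emph{Step 2 (local factorization).} Fix $n$ and set $D_N:=P_N\widehat{T}P_N:X_N\to X_N$ for a large $N=N(n,\Gamma,\eta')$ to be chosen. The key point is that for every Haar function $h_Q$, $Q\in\dint^m\times\dint^N$,
\[
|\langle h_Q,D_Nh_Q\rangle|+|\langle h_Q,(\Id-D_N)h_Q\rangle|\geq |Q|,
\]
so at least one of the two summands is $\geq|Q|/2$. Running the construction of Theorem~\ref{thm:quasi-diag}, I embed this dichotomy into the sign-selection step: rather than averaging only against $D_N$, I average against the larger of $\langle b_R^{(\varepsilon)},D_Nb_R^{(\varepsilon)}\rangle$ and $\langle b_R^{(\varepsilon)},(\Id-D_N)b_R^{(\varepsilon)}\rangle$, then use a finite pigeonhole (the index set $\dint^m\times\dint^n$ is finite) to pass to a subsystem on which the \emph{same} operator $H_n\in\{D_N,\Id_{X_N}-D_N\}$ has large diagonal $\geq \tfrac{1}{2}\|b_R^{(\varepsilon)}\|_2^2$ for every $R$. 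Feeding this block basis into Theorem~\ref{thm:localized-factorization} (in the tensored form of Remark~\ref{rem:thm:localized-factorization}) yields local operators $R_n,S_n$ factoring $\Id_{X_n}$ through $H_n$ with $\|R_n\|\|S_n\|\leq (1+\eta')/(1/2)=2(1+\eta')$.

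\emph{Step 3 (gluing).} With the local data of Step~2 in hand, Proposition~\ref{pro:local} assembles the diagrams into a factorization of $\Id_{X^{(r)}}$ through a global $H$. Because the proposition allows a coordinate-wise choice of $H_n$, a final pigeonhole on $n$ restricts to an infinite set of indices where the choice is uniform, say $H_n=D_{N(n)}$; the complementary direct sum is isomorphic to $X^{(r)}$ (by Proposition~\ref{pro:iso-iso} and Pe\l czy\'nski's method), so this forces the global $H=T$ (or, in the other case, $H=\Id-T$). The gluing cost is $1+\varepsilon$ for $r<\infty$ and $C_P^3(1+\varepsilon)$ for $r=\infty$, both of which can be made arbitrarily close to $1$. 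Undoing Step~1 and choosing the auxiliary parameters $\eta',\theta,\varepsilon$ small enough, the total cost fits in $\|R\|\|S\|\leq 2+\eta$.

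The hard part is Step~2: producing a block basis that simultaneously satisfies the local product conditions~\textref[P]{enu:p1}--\textref[P]{enu:p4} (so Theorem~\ref{thm:projection} gives boundedness and equivalence to the Haar system), almost-diagonalizes $D_N$, and has \emph{uniform} large diagonal for one consistent choice $H\in\{D_N,\Id-D_N\}$. The subtle bookkeeping is that the signs $\varepsilon_Q$ have to be chosen compatibly with the two-colored diagonal, and simultaneously the number $N$ must be taken large enough (depending on $n,\Gamma,\eta$) for the combinatorial Lemma~\ref{lem:comb-1} to produce, at every inductive step, rectangles that are frequency-separated from all previously constructed block basis elements. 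These are exactly the adjustments that the quantitative structure of Theorem~\ref{thm:quasi-diag} is designed to accommodate.
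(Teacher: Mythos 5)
Your Steps 1 and 3 follow the paper's outline (the diagonalization and the gluing via Proposition~\ref{pro:local} with Theorem~\ref{thm:projections-that-annihilate} supplying the hypothesis of Definition~\ref{dfn:property-pafds} when $r=\infty$), but Step 2 has a genuine gap at exactly the point you flag as ``the hard part''. The dichotomy $|\langle b_R,D_N b_R\rangle|\geq\tfrac12\|b_R\|_2^2$ or $|\langle b_R,(\Id-D_N)b_R\rangle|\geq\tfrac12\|b_R\|_2^2$ cannot be resolved by sign selection: since $\varepsilon_Q^2=1$, the diagonal contribution $\sum_{Q\in\mathscr B_R}\alpha_Q|Q|$ is independent of the signs, so the signs only control off--diagonal terms and cannot make either candidate large. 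The choice between $D_N$ and $\Id-D_N$ is therefore made rectangle by rectangle, and a ``finite pigeonhole'' on the index set $\dint^m\times\dint^n$ does not suffice to make it uniform: pigeonhole only yields that one color class contains many rectangles, whereas what is needed is a color class containing a \emph{product} collection $\mathscr A\times\mathscr B$ with both $\cc{\mathscr A}$ and $\cc{\mathscr B}$ large, so that a Gamlen--Gaudet type construction can extract from it collections $(\mathscr E_I)$, $(\mathscr F_J)$ reproducing the bi--parameter structure, i.e.\ satisfying \textref[P]{enu:p1}--\textref[P]{enu:p4} together with the regularity assumptions of Lemma~\ref{lem:product:local-product}. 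An arbitrary ``half'' of $\dint^{n_1}\times\dint^{n_1}$ need not contain any such product set; this is precisely the bi--parameter obstruction that forces the paper to invoke the Ramsey type Theorem~\ref{thm:ramsey} (with the huge intermediate dimension $n_1=\lceil\gamma 2^{4^\gamma}\rceil$) applied to the color set $\mathscr C=\{R:|\langle b_R,D_nb_R\rangle|\geq\|b_R\|_2^2/2\}$.

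The paper's route is: apply Theorem~\ref{thm:quasi-diag} with $\delta=0$ (no large--diagonal hypothesis, all signs $+1$) to get an almost--diagonalizing block basis $(b_R)$ on $\dint^{n_1}\times\dint^{n_1}$ satisfying the local product conditions; use Theorem~\ref{thm:ramsey} to find $\mathscr E\times\mathscr F\subset\mathscr C$ or $\subset\mathscr C^c$ with large Carleson constants, fixing $H_n=D_n$ or $H_n=\Id-D_n$ accordingly; build the iterated block basis $\widetilde b_R=\sum_{Q\in\widetilde{\mathscr B}_R}h_Q$ and use the new reiteration Lemma~\ref{lem:product:local-product} to show it again satisfies \textref[P]{enu:p1}--\textref[P]{enu:p4}, so Theorem~\ref{thm:projection} gives equivalence to the Haar system of $X_n$ and a bounded projection; only then does Theorem~\ref{thm:localized-factorization} (via Remark~\ref{rem:thm:localized-factorization}) apply with effective $\delta\approx 1/2$, giving the local constant $2+\eta$. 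Your proposal omits all three of these ingredients (the Ramsey theorem, the Carleson--constant/Gamlen--Gaudet step, and the reiteration lemma), and without them the uniform choice of $H_n$ on a complemented copy of $X_n$ is not established.
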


The following Ramsey type Theorem~\ref{thm:ramsey} is the last missing ingredient for the proof of Theorem~\ref{thm:factorization:restated} (Main result Theorem~\ref{thm:factorization}).
\begin{thm}\label{thm:ramsey}
  Given $n_0 \in \mathbb N$ there exists $n \in \mathbb N$ such that for any collection
  $\mathscr C \subset \dint^n\times \dint^n$ one finds $\mathscr A,\mathscr B \subset \dint$
  satisfying
  \begin{enumerate}[(i)]
  \item $\mathscr A \times \mathscr B \subset \mathscr C$
    or
    $\mathscr A \times \mathscr B \subset (\dint^n\times \dint^n) \setminus \mathscr C$,
  \item $\cc{\mathscr A} \geq n_0$ and $\cc{B} \geq n_0$.
  \end{enumerate}
  One can choose $n = n_0\, 2^{4^{n_0}}$.
\end{thm}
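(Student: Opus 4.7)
I would handle this with a one-step pigeonhole on ``column types'' combined with the sub-additivity of the Carleson constant.

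First, fix $\mathscr I_0 = \dint^{2n_0-1}$, the dyadic intervals of length at least $2^{-(2n_0-1)}$; observe that $|\mathscr I_0| = 4^{n_0}-1$ and $\cc{\mathscr I_0} = 2n_0$ (the latter realized at the root $[0,1)$). To each $I\in\dint^n$ attach a column type $\sigma(I)\in\{0,1\}^{\mathscr I_0}$ by $\sigma(I)_J = \charfun_{\mathscr C}(I\times J)$; there are $T = 2^{|\mathscr I_0|} = 2^{4^{n_0}-1}$ possible types. Partitioning $\dint^n = \bigsqcup_\sigma \mathscr A_\sigma$ by type and using sub-additivity of the Carleson constant (see below), I obtain $\sum_\sigma \cc{\mathscr A_\sigma} \geq \cc{\dint^n} = n+1$, so some $\sigma^*$ satisfies
\[
  \cc{\mathscr A_{\sigma^*}} \geq \frac{n+1}{T} \geq 2n_0,
\]
the last inequality holding since $n = n_0\cdot 2^{4^{n_0}} = 2\cdot n_0 T$. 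Set $\mathscr A = \mathscr A_{\sigma^*}$.

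Next, split $\mathscr I_0 = \mathscr B^+ \sqcup \mathscr B^-$ by $\mathscr B^\pm = \{J\in \mathscr I_0 : \sigma^*_J = \pm\}$. Sub-additivity applied once more yields $\cc{\mathscr B^+} + \cc{\mathscr B^-} \geq \cc{\mathscr I_0} = 2n_0$, so one of them has Carleson constant at least $n_0$; take that side as $\mathscr B$. By the very definition of $\sigma^*$, the rectangle $\mathscr A\times \mathscr B$ is contained in $\mathscr C$ (if $\mathscr B = \mathscr B^+$) or in its complement (if $\mathscr B = \mathscr B^-$), which is exactly~(i); condition~(ii) also holds by construction.

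The only substantive step is sub-additivity: $\cc{\mathscr C_1 \cup \mathscr C_2} \leq \cc{\mathscr C_1} + \cc{\mathscr C_2}$. For a root $I \in \mathscr C_1$, the $\mathscr C_1$-contribution to the Carleson sum over $I$ is bounded by $\cc{\mathscr C_1}\cdot |I|$ by definition; the $\mathscr C_2$-contribution decomposes along the (automatically disjoint, by dyadic nesting) maximal elements of $\{J\in \mathscr C_2 : J\subset I\}$, each such sub-tree contributing at most $\cc{\mathscr C_2}$ times the length of its root, and the roots in turn sum to at most $|I|$. This is the only real computation in the proof; I do not foresee any serious obstacle.
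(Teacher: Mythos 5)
Your argument is correct: sub-additivity of the Carleson constant holds exactly as you prove it (via the pairwise disjoint maximal elements of $\{J\in\mathscr C_2 : J\subset I\}$), and with $T=2^{4^{n_0}-1}$ row types over $\mathscr I_0=\dint^{2n_0-1}$ the choice $n=n_0\,2^{4^{n_0}}=2n_0T$ gives $\cc{\mathscr A_{\sigma^*}}\geq 2n_0$ and then $\max\big(\cc{\mathscr B^+},\cc{\mathscr B^-}\big)\geq n_0$, which yields (i) and (ii). The paper gives no proof of its own but only cites M\"uller (1994) and Lechner--M\"uller (2014), and your type--pigeonhole argument is essentially the argument behind those references, as the exact matching of the bound $n=n_0\,2^{4^{n_0}}=2n_0\cdot 2^{4^{n_0}-1}$ indicates.
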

\begin{proof}
  We refer to~\cite{mueller:1994}.
  See also~\cite{lechner_mueller:2014}.
\end{proof}

\begin{proof}[Proof of Theorem~\ref{thm:factorization:restated} (Main result Theorem~\ref{thm:factorization})]
  The proof follows the pattern of the corresponding proof in~\cite{lechner_mueller:2014}.
  Let $1 \leq p,q < \infty$, $1\leq r \leq \infty$ and define the space $X^{(r)}$ by
  \begin{equation*}
    X^{(r)} = \big( \sum_{n\in \mathbb N} X_n \big)_r.
  \end{equation*}
  Let $\eta > 0$ and $T : X^{(r)}\to X^{(r)}$.
  Again, we will only prove the case where $X_n = H^p_n(H^q_n)$.
  The case $X_n = H^p_n(H^q_n)^*$ is repeating the following argument with the roles of $T$ and
  $T^*$ reversed.

  In the first part of the proof we will show that for all $n\in \mathbb N$ and $\Gamma > 0$, there
  is an integer $N = N(n,\Gamma,\eta)$ such that for any operator $D_n : X_N\to X_N$ with
  $\|D_n\|\leq \Gamma$ there exist operators $R_n$, $S_n$ so that
  \begin{equation}\label{eq:proof:thm:factorization-1}
    \vcxymatrix{
      X_n \ar[r]^{\Id} \ar[d]_{S_n} & X_n\\
      X_N \ar[r]_{H_n} & X_N \ar[u]_{R_n}
    }
    \qquad \|R_n\|\|S_n\|\leq 2 + \eta,
  \end{equation}
  where $H_n = D_n$ or $H_n = \Id_{X_N} - D_n$.

  To this end, let $\eta' > 0$ be parameter, which will be specified at a later point.
  Firstly, we choose $\gamma = \gamma(n,\eta')$ so large, so that for any collection
  $\mathscr E\subset \dint$ with Carleson constant $\cc{\mathscr E} \geq \gamma$ exist collections
  $\mathscr E_I \subset \mathscr  E$, $I\in \dint^n$, and an affine map $\psi : [0,1)\to [0,1)$ so
  that the sequence of collections
  $(\psi(\mathscr E_I)\times \psi(\mathscr E_J) : I,J \in \dint^n)$ satisfies
  \textref[P]{enu:p1}--\textref[P]{enu:p4} with constant $C_X = 1+\eta'$, as well as the additional
  regularity assumptions~\eqref{enu:lem:product:local-product:i}
  and~\eqref{enu:lem:product:local-product:ii} of Lemma~\ref{lem:product:local-product}.
  For a detailed exposition we refer the reader to~\cite{mueller:2005}.

  Secondly, if we put $n_1 = \lceil \gamma 2^{4^\gamma} \rceil$, the Ramsey Theorem~\ref{thm:ramsey}
  asserts that whenever $\mathscr C\subset \dint^{n_1}\times \dint^{n_1}$, there exist
  collections $\mathscr E, \mathscr F\subset \dint^{n_1}$ with
  $\cc{\mathscr E},\cc{\mathscr F}\geq \gamma$ so that either
  \begin{equation*}
    \mathscr E\times \mathscr F\subset \mathscr C
    \qquad\text{or}\qquad
    \mathscr E\times \mathscr F\subset \dint^{n_1}\times\dint^{n_1}\setminus \mathscr C.
  \end{equation*}

  Thirdly, applying Theorem~\ref{thm:quasi-diag} with $\delta = 0$, yields an integer
  $N=N(n_1,\Gamma,\eta')$ (this is exactely the integer $N$ of Theorem~\ref{thm:quasi-diag} with the
  specified parameters) and a sequence of collections of sets $(\mathscr B_R : R\in \dint)$ with the
  following properties:
  \begin{enumerate}[(a)]
  \item $\mathscr B_R \subset \dint^N\times\dint^N$ for all $R\in \dint^{n_1}\times\dint^{n_1}$.
    \label{proof:enu:thm:factorization:quasi-diag-i}

  \item $(\mathscr B_R : R\in \dint^n\times\dint^n)$ satisfies the local product conditions with
    constants $C_X = 1+\eta'$ and $C_Y = 1+\eta'$.
    \label{proof:enu:thm:factorization:quasi-diag-ii}

  \item The $(b_R : R\in \dint^n\times\dint^n)$ almost--diagonalize $D_n$.
    To be more precise, we have the estimate
    \begin{equation}\label{proof:eq:thm:factorization:quasi-diag-iii}
      \sum_{\substack{R' \in \dint^n\times\dint^n\\R'\neq R}}
      | \langle b_R, D_n b_{R'} \rangle |
      \leq \eta' \|b_R\|_2^2,
      \qquad R \in \dint^n\times\dint^n.
    \end{equation}
  \end{enumerate}
  We note that since there is no lower estimate for the diagonal, we can choose all the signs
  $\varepsilon_Q$ equal to $1$, so henceforth we will omit the superscript $(\varepsilon)$ of
  $b_R^{(\varepsilon)}$, and simply denote the function by $b_R$.
 
  Fourthly, we will now combine the first three steps.
  We specify the collection of dyadic rectangles $\mathscr C$ by
  \begin{equation}\label{eq:ramsey-collection}
    \mathscr C
    = \{ R\in \dint^{n_1}\times\dint^{n_1} :
    |\langle b_R, D_n b_R\rangle|\geq \|b_R\|_2^2/2
    \}.
  \end{equation}
  By the choice of our parameters in the first two steps, we can find finite sequences of
  collections $(\mathscr E_I : I\in \dint^n)$ and $(\mathscr F_J : J\in \dint^n)$ so that
  \begin{equation*}
    \mathscr E_I\times \mathscr F_J \subset \mathscr C
    \qquad\text{or}\qquad
    \mathscr E_I\times \mathscr F_J \subset \dint^{n_1}\times \dint^{n_1}\setminus \mathscr C,
  \end{equation*}
  for all $I,J\in \dint^{n_1}$.  If the first inclusion is true we put $H_n = D_n$, if the second is
  true, then we define $H_n = \Id_{X_N} - D_n$.
  We will now construct a block basis $(\widetilde b_R)$ of the block
  basis $(b_R)$ of the Haar system $(h_R)$.
  We define the collection of dyadic rectangles $\widetilde{\mathscr B}_{I\times J}$ by
  \begin{equation}\label{eq:iterated-collection}
    \widetilde{\mathscr B}_{I\times J}
    = \bigcup_{\substack{E\in \mathscr E_I\\F\in \mathscr F_J}} \mathscr B_{E\times F},
    \qquad I,J\in \dint^n,
  \end{equation}
  and the corresponding block basis elements $\widetilde b_{I\times J}$ by
  \begin{equation}\label{eq:iterated-block-basis}
    \widetilde b_R
    =  \sum_{Q\in \widetilde{\mathscr B}_R} h_Q,
    \qquad R\in \dint^n\times \dint^n.
  \end{equation}
  Note that $\widetilde{\mathscr B}_R\subset \dint^N\times \dint^N$, $R\in \dint^n\times \dint^n$
  hence $\widetilde b_R\in H^p_N(H^q_N)$, $R\in \dint^n\times \dint^n$.
  The reiteration Lemma~\ref{lem:product:local-product} gives us that
  $\widetilde{\mathscr B}_R$, $R\in \dint^n\times \dint^n$ satisfies the local product
  conditions~\textref[P]{enu:p1}--\textref[P]{enu:p4} with constants $C_X = C_Y = (1+\eta')^4$.
  Now put
  \begin{equation*}
    Y_n = \spn\{\widetilde b_R : R\in \dint^n\times\dint^n\}\subset X_N,
  \end{equation*}
  equipped with the $H^p(H^q)$ norm.
  We summarize what we have proved this far: by Theorem~\ref{thm:projection}, we have that
  \begin{equation}\label{eq:factor-1}
    \vcxymatrix{
      X_n \ar[r]^{\Id_{X_n}} \ar[d]_{E_n} & X_n\\
      Y_n \ar[r]_{\Id_{Y_n}} & Y_n \ar[u]_{E_n^{-1}}
    }
    \qquad \|E_n\|\|E_n^{-1}\|\leq (1+\eta')^{8k},
  \end{equation}
  where $k$ is the integer appearing in Theorem~\ref{thm:projection}.
  Furthermore, by~\eqref{proof:eq:thm:factorization:quasi-diag-iii}, \eqref{eq:ramsey-collection},
  \eqref{eq:iterated-collection}, \eqref{eq:iterated-block-basis} and Theorem~\ref{thm:projection} we have
  the estimates
  \begin{subequations}\label{eq:factor-estimate}
    \begin{align}
      \sum_{\substack{R' \in \dint^n\times\dint^n\\R'\neq R}} | \langle \widetilde b_R, H_n
      \widetilde b_{R'} \rangle | &\leq c(n,\eta') \|\widetilde b_R\|_2^2, & R &\in
      \dint^n\times\dint^n,
      \label{eq:factor-estimate:a}\\
      |\langle \widetilde b_R, H_n \widetilde b_R \rangle| &\geq \big( \frac{1}{2} - c(n,\eta')
      \big) \|\widetilde b_R\|_2^2, & R &\in \dint^n\times\dint^n,
      \label{eq:factor-estimate:b}
    \end{align}
  \end{subequations}
  where $c(n,\eta')\to 0$, if $\eta'\to 0$.
  Remark Remark~\ref{rem:thm:localized-factorization} allows us to replace $b_R^{(\varepsilon)}$ in
  Theorem~\ref{thm:localized-factorization} by $\widetilde b_R$, thus Theorem~\ref{thm:localized-factorization}
  yields
  \begin{equation*}
    \vcxymatrix{Y_n \ar[r]^{\Id_{Y_n}} \ar[d]_E & Y_n\\
      H^p_m(H^q_N) \ar[r]_{H_n} & H^p_m(H^q_N) \ar[u]_P}
  \end{equation*}

  shows
  that~\eqref{eq:proof:thm:factorization-1} is true, if we choose $\eta'$ appropriately.
  Alternatively to invoking Remark~\ref{rem:thm:localized-factorization} and
  Theorem~\ref{thm:localized-factorization}, we could repeat the proof of
  Theorem~\ref{thm:localized-factorization} after~\eqref{eq:proof:factorization-c} with $\widetilde b_R$
  instead of $b_R^{(\varepsilon)}$
  (which is of course part of the argument behind Remark~\ref{rem:thm:localized-factorization}).

  Finally, observe that Theorem~\ref{thm:projections-that-annihilate} implies that $(X_n)_{n\in \mathbb N}$
  has the property that projections almost annihilate finite dimensional subspaces with constant
  $1 + \eta'$ (see Definition~\ref{dfn:property-pafds}).
  Thus, applying Proposition~\ref{pro:local} concludes the proof.
\end{proof}

\subsection{Proof of the main result Theorem~\ref{thm:primary}}\label{subsec:proof-primary}\hfill

\noindent
We will now give the proof Theorem~\ref{thm:primary}.
It follows from Theorem~\ref{thm:factorization} and Pe{\l}czy{\'n}ski's decomposition method.
\begin{proof}
  Let $1\leq r \leq \infty$ and let $X$ denote either the space
  $\big(\sum_{n\in \mathbb N} H^p_n(H^q_n)\big)_r$ or
  $\big(\sum_{n\in \mathbb N} H^p_n(H^q_n)^*\big)_r$.
  Let $Q$ be a bounded projection on $X$.

  Clearly, $(\sum X)_r$ is isomorphic to $(\sum (\sum X)_r)_r$.
  Furthermore, $X$ is isomorphic to a complemented subspace of $(\sum X)_r$, and $(\sum X)_r$ is
  isomorphic to a complemented subspace of $X$.  Hence, by Pe{\l}czy{\'n}ski's decomposition method,
  $X$ is isomorphic to $(\sum X)_r$.
  From Theorem~\ref{thm:factorization}, we obtain that
  \begin{equation*}
    \vcxymatrix{
      X \ar[r]^{\Id} \ar[d]_S & X\\
      X \ar[r]_H & X \ar[u]_R
    }
  \end{equation*}
  where $H = Q$ or $H = \Id - Q$.
  The diagram shows that $H(X)$ is a complemented subspace of $X$, and that $X$ is isomorphic to a
  complemented subspace of $H(X)$, hence, by Pe{\l}czy{\'n}ski's decomposition method we obtain that
  $H(X)$ is isomorphic to $X$.
\end{proof}

\bibliographystyle{abbrv}
\bibliography{bibliography}

\begin{thebibliography}{10}

\bibitem{andrew:1979}
A.~D. Andrew.
\newblock Perturbations of {S}chauder bases in the spaces {$C(K)$} and
  {$L^{p}$}, {$p>1$}.
\newblock {\em Studia Math.}, 65(3):287--298, 1979.

\bibitem{blower:1990}
G.~Blower.
\newblock The {B}anach space {$B(l^2)$} is primary.
\newblock {\em Bull. London Math. Soc.}, 22(2):176--182, 1990.

\bibitem{bourgain:1983}
J.~Bourgain.
\newblock On the primarity of {$H^{\infty }$}-spaces.
\newblock {\em Israel J. Math.}, 45(4):329--336, 1983.

\bibitem{capon:1982}
M.~Capon.
\newblock Primarit\'e de {$L^{p}(L^{r})$}, {$1<p,\,r<\infty$}.
\newblock {\em Israel J. Math.}, 42(1-2):87--98, 1982.

\bibitem{defant:lopez-molina:rivera:2000}
A.~Defant, J.~A. L{\'o}pez-Molina, and M.~J. Rivera.
\newblock On {P}itt's theorem for operators between scalar and vector-valued
  quasi-{B}anach sequence spaces.
\newblock {\em Monatsh. Math.}, 130(1):7--18, 2000.

\bibitem{laustsen:lechner:mueller:2015}
N.~J. {Laustsen}, R.~{Lechner}, and P.~F.~X. {M{\"u}ller}.
\newblock {Factorization of the identity through operators with large
  diagonal}.
\newblock unpublished.

\bibitem{lechner_mueller:2014}
R.~Lechner and P.~F.~X. M{\"u}ller.
\newblock Localization and projections on bi-parameter {BMO}.
\newblock {\em Q. J. Math.}, 66(4):1069--1101, 2015.

\bibitem{lindenstrauss-tzafriri:1977}
J.~Lindenstrauss and L.~Tzafriri.
\newblock {\em Classical {B}anach spaces. {I}}.
\newblock Springer-Verlag, Berlin-New York, 1977.
\newblock Sequence spaces, Ergebnisse der Mathematik und ihrer Grenzgebiete,
  Vol. 92.

\bibitem{maurey:1980}
B.~Maurey.
\newblock Isomorphismes entre espaces {$H_{1}$}.
\newblock {\em Acta Math.}, 145(1-2):79--120, 1980.

\bibitem{mueller:1994}
P.~F.~X. M{\"u}ller.
\newblock Orthogonal projections on martingale {$H^1$} spaces of two
  parameters.
\newblock {\em Illinois J. Math.}, 38(4):554--573, 1994.

\bibitem{mueller:2005}
P.~F.~X. M{\"u}ller.
\newblock {\em Isomorphisms between {$H\sp 1$} spaces}, volume~66 of {\em
  Instytut Matematyczny Polskiej Akademii Nauk. Monografie Matematyczne (New
  Series) [Mathematics Institute of the Polish Academy of Sciences.
  Mathematical Monographs (New Series)]}.
\newblock Birkh\"auser Verlag, Basel, 2005.

\bibitem{wark:2007}
H.~M. Wark.
\newblock The {$l^\infty$} direct sum of {$L^p$} {$(1<p<\infty)$} is primary.
\newblock {\em J. Lond. Math. Soc. (2)}, 75(1):176--186, 2007.

\bibitem{wojtaszczyk:1991}
P.~Wojtaszczyk.
\newblock {\em Banach spaces for analysts}, volume~25 of {\em Cambridge Studies
  in Advanced Mathematics}.
\newblock Cambridge University Press, Cambridge, 1991.

\end{thebibliography}
% \nocite{*}

\end{document}